\DeclareMathOperator{\height}{height}
\DeclareMathOperator{\Fr}{Fr}
\DeclareMathOperator{\Gr}{Gr}
\newcommand{\kk}{k}
\title{Explicit Non-abelian Lubin-Tate theory for $\GL_2$}
\author{Jared Weinstein}
\begin{document}
\maketitle

\begin{abstract}  Let $F$ be a non-Archimedean local field with residue field $k$ of odd characteristic, and let $B/F$ be the division algebra of rank 4.  We explicitly construct a stable curve $\mathfrak{X}$ over the algebraic closure of $k$ admitting an action of $\GL_2(F)\times B^\times \times W_F$ which realizes the Jacquet-Langlands correspondence and the local Langlands correspondence in its cohomology.
\end{abstract}

\section{Introduction}

Let $F$ be a non-Archimedean field;  {\em i.e.} a finite extension either of $\Q_p$ or the field of Laurent series over a finite field, and let $\OO_F$ be its ring of integers.  Let $n\geq 1$ be an integer, and let $B/F$ be the central division algebra of invariant $1/n$.  There are well-known correspondences between representations of the groups $\GL_n(F)$, $B^\times$, and $W_F$, the Weil group of $F$.  These are the Jacquet-Langlands correspondence $\pi\mapsto\JL(\pi)$ (between $\GL_n$ and $B^\times$) and the local Langlands correspondence $\pi\mapsto\sigma(\pi)$

Loosely speaking, {\em non-Abelian Lubin-Tate theory} refers to the construction of a geometric object $\mathfrak{X}$ which realizes these correspondences simultaneously in its cohomology.  That is, one finds an action of the triple product group $\GL_n(F)\times B^\times \times W_F$ on the Euler characteristic of $\mathfrak{X}$ (computed with respect to an appropriate cohomology theory), which decomposes as a formal sum of representations of the form $\pi\otimes\pi'\otimes\sigma$, where $\pi$ is a representation of $\GL_n(F)$, $\pi'$ is the representation of $B^\times$ which corresponds to $\pi$ under the Jacquet-Langlands correspondence, and $\sigma$ is a representation of $W_F$ which corresponds to $\pi$ under the (suitably normalized) Langlands correspondence.

The case of $n=1$ is classical Lubin-Tate theory~\cite{LubinTate}, in which the isomorphism $\GL_1(F)=F^\times\isom W_F^{\text{ab}}$ of local class field theory is established through the study of division points of a one-dimensional commutative formal $\OO_F$-module of height 1.  For higher $n$, Carayol~\cite{Carayol:nonabelianLT} offered two approaches to the construction of the space $\mathfrak{X}$.  In the {\em vanishing cycle} setting, the role of  $\mathfrak{X}$ is played by the rigid generic fiber of the projective system $M^n_{\text{LT},F}$ of formal schemes representing the functor of deformations of a fixed formal $\OO_F$-module of height $n$ with Drinfeld level structures of all degrees.   In the {\em rigid} setting, the space $\mathfrak{X}$ is a projective system of \'etale covers $\tilde{\Omega}_F^n$ of Drinfeld's rigid-analytic upper half space.  It is now known that in each case, the compactly supported \'etale cohomology $H_c^*(\mathfrak{X})$ realizes the local correspondences on the level of supercuspidal representations of $\GL_n(F)$.  In the vanishing cycle setting this is due to Harris and Taylor~\cite{HarrisTaylor:LLC} in the $p$-adic case and Boyer~\cite{Boyer} in the function field case.  In the rigid setting it is due to Harris~\cite{Harris:Supercuspidal} in the $p$-adic case and Hausberger~\cite{Hausberger} in the function field case.

In each of the above cases the establishment of the correspondences in cohomology begins by embedding the local problem into a global one and appealing to results from the theory of Shimura varieties or Drinfeld modular varieties.   Strauch~\cite{Strauch} proved that the Euler characteristic of $M^n_{\text{LT},F}$ realizes the Jacquet-Langlands correspondence without the use of global moduli spaces.  Also notable is Yoshida's purely local study~\cite{yoshida} of the vanishing cycles of the deformation space of formal $\OO_F$-modules with tame level structure; these are shown to realizes the local Langlands correspondence for supercuspidal representations of depth zero.

Running parallel to these advances in non-abelian Lubin-Tate theory are a great deal of results which give {\em explicit} and {\em purely local} constructions of representations of $\GL_n(F)$ and of $B^\times$, rather than an abstract construction which realizes these representations in cohomology.  Earliest among these is the paper of Howe~\cite{Howe}, which associates a supercuspidal representation of $\GL_n(F)$ to each ``admissible" character of a degree $n$ extension $E/F$.  By elaborating on this construction of supercuspidals for $\GL_2$, Kutzko~\cite{Kutzko:1},~\cite{Kutzko:2} established the local Langlands correspondence for $n=2$.  The fundamental work of Bushnell and Kutzko~\cite{BushnellKutzko} gives an explicit parametrization of admissible representations of $\GL_n(F)$ in terms of their theory of strata.  From here it is natural to attempt to describe the correspondences purely in terms of this parametrization.   This is what is done in the papers of Henniart~\cite{HenniartJLI} and Bushnell-Henniart~\cite{HenniartJLII},~\cite{HenniartJLIII}, where many cases of the Jacquet-Langlands correspondence are established explicitly;  further papers of Bushnell-Henniart~\cite{BushnellHenniart:I},~\cite{BushnellHenniart:II} give an explicit description of the local Langlands correspondence in the ``essentially tame case".

This paper is a modest attempt to draw a connection between the geometry in non-abelian Lubin-Tate theory and the
explicit methods of cuspidal strata and types.  We take $n=2$ and $F$ to have odd residual characteristic $p$.  The aim of this paper is to explicitly construct a variety $\mathfrak{X}$ defined over the algebraic closure of the residue field $k$ of $F$ which plays the role of $M^2_{\text{LT},F}$ from the point of view of non-abelian Lubin-Tate theory.  That is, $\mathfrak{X}$ admits an action of $\GL_2(F)\times B^\times\times W_F$ in such a way that the correspondences are realized in $H^1_{\text{\'et}}(\mathfrak{X},\QQ_\ell)$ for all primes $\ell\neq p$.

The variety $\mathfrak{X}$ is not particularly exotic:  its irreducible components are smooth geometrically connected projective curves over $\overline{k}$, and the only singularities of $\mathfrak{X}$ occur as normal crossings between these components.  The connected components of $\mathfrak{X}$ are in canonical bijection with the connected components of $M^2_{\text{LT},F}$.
For this reason we refer to $\mathfrak{X}$ as the {\em stable Lubin-Tate curve} for $\GL_2(F)$.

A key feature of this construction is that certain subtleties of the local Langlands correspondence now admit a natural explantation in terms of the geometry of $\mathfrak{X}$.  To wit, suppose $E/F$ is a tame quadratic field extension and $\chi$ is a character of $E^\times$, identified with a character of $W_E$, such that $\Ind_{E/F}\chi$ is irreducible.  There is a ``na\"ive" method of constructing a supercuspidal representation $\pi_\chi$ of $\GL_2(F)$, as described in~\cite{Howe}.  It is not the case that $\Ind_{E/F}\chi\mapsto\pi_\chi$ is the Langlands correspondence; {\em e.g.} because the central character of $\pi_\chi$ does not agree with $\det\Ind_{E/F}\chi$ as characters of $F^\times$.  One must modify the na\"ive construction by twisting $\chi$ by a certain tamely ramified character $\Delta_\chi$ of $E^\times$;  then $\Ind_{E/F}\pi_{\chi\Delta_\chi}\mapsto\pi_\chi$ serves as the correct correspondence.  The character $\Delta_\chi$ may be described in an {\em ad hoc} fashion.  In \S\ref{LLCrealization} we show how $\Delta_\chi$ appears in the study of the action of Frobenius on the cohomology of some interesting curves over $k$.

\subsection{The Correspondences}  Let $\Omega$ be an algebraically closed field of characteristic 0.  Let $\mathcal{A}_2(F,\Omega)$ be the set of equivalence classes of smooth irreducible representations of $\GL_2(F)$ with coefficients in $\Omega$, and let $\mathcal{A}_2^d(F,\Omega)$ be the set of discrete series representations.  Likewise, let $\mathcal{A}_2^B(F,\Omega)$ be the set of equivalence classes of smooth irreducible representations of $B^\times$.  We simply write $\mathcal{A}_2^d(F)$ or $\mathcal{A}_2^B(F)$ if $\Omega=\C$.  The {\em Jacquet-Langlands correspondence} is a bijection $$\JL\from \mathcal{A}_2(F)\to\mathcal{A}_2^B(F)$$ satisfying the appropriate trace identity, see~\cite{JacquetLanglands}.  The correspondence $\JL$ is algebraic in the sense that it commutes with field automorphisms of $\C$.  Therefore $\JL$ may be extended canonically to a bijection $\mathcal{A}_2^d(F,\Omega)\to\mathcal{A}_2^B(F,\Omega)$, which we also call $\JL$.

The local Langlands correspondence is {\em not} algebraic;  we therefore work with a slight renormalization.  Let $\ell$ be a prime different from $p$. Let $\mathcal{G}_2(F,\overline{\Q}_\ell)$ be the set of isomorphism classes of Weil-Deligne representations of $W_F$ with coefficients in $\QQ_\ell$.  The {\em $\ell$-adic local Langlands correspondence} is a bijection
$$\mathcal{L}_\ell\from \mathcal{G}_2(F,\overline{\Q}_\ell) \to \mathcal{A}_2(F,\overline{\Q}_\ell)$$ which commutes with automorphisms of the field $\overline{\Q}_\ell$.  It is normalized so that whenever $\iota\from \overline{\Q}_\ell\isom\C$ is a field isomorphism, we have
\begin{align*}
L(\chi\mathcal{L}_\ell(\sigma)^\iota,s)&=L(\chi\sigma^\iota,s-\tfrac{1}{2})\\
\eps(\chi\mathcal{L}_\ell(\sigma)^\iota,s,\psi)&=\eps(\chi\sigma^\iota,s-\tfrac{1}{2},\psi)
\end{align*}
for all representations $\sigma\in\mathcal{G}_2(F,\overline{\Q}_\ell)$, all characters $\chi$ of $F^\times$, and all characters $\psi$ of $F$.  
This is the normalization that appears in the association of Galois representations to Hilbert modular forms.

\subsection{Statement of main theorem}

For our purposes, a {\em stable} curve over $\overline{k}$ is a variety $\mathfrak{X}$ proper and flat over $\Spec \overline{k}$ whose irreducible components are smooth irreducible curves over $k$ such that
\begin{enumerate}
\item The only singularities of $\mathfrak{X}$ are normal crossings between distinct irreducible components, and
\item Each rational component of $\mathfrak{X}$ meets the other components in at least three points.
\end{enumerate}
We do not require that $\mathfrak{X}$ be of finite presentation over $\Spec\overline{k}$.

\begin{defn}
\label{semilineardefn}
Let $G$ be a group admitting a homomorphism $\phi\from G\to \Gal(\overline{k}/k)$.  Let $X$ be a $\overline{k}$-scheme.  An action of $G$ on $X$ is called {\em $\overline{k}$-semilinear} with respect to $\phi$ if for all $t\in G$ the diagram
$$\xymatrix{
X \ar[r]^t \ar[d]  & X \ar[d] \\
\Spec\overline{k} \ar[r]^{\phi(t)} & \Spec\overline{k}
}$$
commutes.
\end{defn}

Our main theorem applies this definition to the triple product group $G=\GL_2(F)\times\times B^\times\times W_F$.  For the homomorphism $\phi\from G\to \Gal(\overline{k}/k)$ we take projection onto $W_F$ followed by the natural map $W_F\to\Gal(\overline{k}/k)$.

\begin{Theorem}  \label{mainthm} Assume that the characteristic of $k$ is $p\neq 2$.  Then there exists a stable curve $\mathfrak{X}$ over $\overline{k}$ admitting a semilinear action of $\GL_2(F)\times B^\times\times W_F$ with the following property.  For every prime $\ell\neq p$ and every supercuspidal representation $\pi$ of $\GL_2(F)$ with coefficients in $\overline{\Q}_\ell$ we have
$$\Hom_{\GL_2(F)}\left(\pi,H^1(\mathfrak{X},\overline{\Q}_\ell)\right)\isom\JL(\pi)\otimes\mathcal{L}(\check{\pi}).$$  On the other hand if $\pi$ is not supercuspidal, then $\Hom_{\GL_2(F)}\left(\pi,H^1(\mathfrak{X},\overline{\Q}_\ell)\right)=0$.
\end{Theorem}

\begin{rmk}  It may be possible that a proof of Thm.~\ref{mainthm} can be given by means of Shimura curves.  Each member of the Lubin-Tate tower $M_{LT,F}^2$ appears as the completion a Shimura curve at a supersingular point.  Over a sufficiently large extension of scalars one can find a model for each Shimura curve which has semi-stable reduction;  by~\cite{Coleman:stable} this can be done in a functorial manner, so that there are maps between the reductions are finite.  In the inverse limit of the reductions, the fiber over a supersingular point ought to have the properties of the stable curve $\mathfrak{X}$ above.  The {\em explicit} determination of the stable reduction of a Shimura curve seems to be quite difficult, however.  This was carried out for the modular curve $X_0(p^2M)$, with $p\geq 5$ and $p\nmid M$ in~\cite{Edix}, and for the modular curve $X_0(p^3M)$ in~\cite{coleman:Xp3}.  Our curve $\mathfrak{X}$ represents our best guess for the structure of the stable reduction of the Shimura ``curve" of infinite $p$-power level.
\end{rmk}

\subsection{Outline of the construction}
\label{outline}
Some preparatory material concerning moduli of deformations of one-dimensional formal groups of height $h$ is given in \S\ref{FormalGroups}.   In \S\ref{CMpoints} we restrict our attention to the case of $h=2$, and discuss deformations with ``CM" (these are Gross' canonical lifts, see~\cite{Gross:canonical}).  For each point $x$ with $CM$ by a tamely ramified quadratic extension $E/F$, we define a decreasing family of subgroups $\mathcal{K}_{x,m}^1\subset \GL_2(F)\times B^\times$, along with certain finite-dimensional representations $\tau$ of $\mathcal{K}_{x,m}^1$.  We prove that the induced representations $\Ind_{\mathcal{K}_{x,m}^1}^{\GL_2(F)\times B^\times}\tau$ realize the Jacquet-Langlands correspondence for the representations of the form $\pi_\chi$, where $\chi$ is an admissible character of $E^\times$ of essential level $m$ (for definitions, see~\ref{admissiblepairs}).  In \S\ref{JLrealization} we define smooth proper curves $\mathfrak{X}_{x,m}$ over $\overline{k}$ admitting an action of $\mathcal{K}_{x,m}^1$ for which the \'etale cohomology $H^1(\mathfrak{X}_{x,m})$ is a direct sum of the representations $\tau$.   In \S\ref{LLCrealization} an action of $W_F$ is introduced so that the fiber product $\mathfrak{X}_{x,m}\times_{\mathcal{K}_{x,m}^1} (GL_2(F)\times B^\times)$ realizes {\em both} correspondences simultaneously in cohomology.  Finally, in \S\ref{construction} the curves $\mathfrak{X}_{x,m}$ are glued together to produce the stable curve $\mathfrak{X}$ in Thm.~\ref{mainthm}.

Suprisingly, there are only two $\overline{k}$-isomorphism classes of higher genus curves which appear among the $\mathfrak{X}_{x,m}$.  One is the Deligne-Lusztig curve for $\SL_2(k)$, with affine equation $XY^q-X^qY=1$.  (It is a special case of~\cite{yoshida} that this curve should appear in the stable reduction of the moduli space of deformations with Drinfeld level-$\varpi_F$ structure.)  The other is the hyperelliptic curve with affine equation $Y^2=X^q-X$, which appears in the stable reduction of the modular curve $X_0(Np^3)$, see~\cite{coleman:Xp3}.  The interplay between the geometry and representation theory of these curves is studied in \S\ref{curves}.



\section{Deformations of one-dimensional formal groups}
\label{FormalGroups}

\subsection{The Moduli Problem.}
Let $F$ be a local nonarchimedean field with uniformizer $\varpi$, maximal ideal $\gp$ and residue field $k$ of cardinality $q$.  Let $n\geq 1$ and let $\Fc_0$ be a one-dimensional formal $\OO_F$-module of height $n$.  Let $\mathcal{C}$ be the category of complete local Noetherian $\hat{\OO}_F^{\text{nr}}$-algebras with residue field $\kk$.   For an integer $m\geq 0$, we consider the moduli problem $\mathcal{M}_m$ which associates to each $R\in\mathcal{C}$ the set of isomorphism classes of {\em deformations} of $\Fc_0$ with $\varpi^m$-level structure.  This is a triple $(\Fc,\iota,\phi)$, where $\F$ is a formal $\OO_F$-module over $R$, $\iota\in\Hom_{\kk}(\Fc_0,\Fc_{\kk})\otimes_{\OO_F} F$ is a quasi-isogeny, and $\phi$ is a {\em Drinfeld level-$m$-structure}, that is, an $\OO_F$-module homomorphism $$\phi\from (\varpi^{-m}\OO_F/\OO_F)^n\to \mathfrak{m}_R,$$ such that the power series $[\varpi]_\Fc(T)\in R\llbracket T \rrbracket$ is divisible by $$\prod_{a\in (\varpi^{-1}\OO_F/\OO_F)^n} (T-\phi(a)).$$  (The maximal ideal $\mathfrak{m}_R$ is to be regarded here as an $\OO_F$-module via $\Fc$.)  An isomorphism between triples $(\Fc,\iota,\phi)$ and $(\Fc',\iota',\phi')$ is an isomorphism of formal $\OO_F$-modules $f\from \Fc\to\Fc'$ which interlaces $\iota$ with $\iota'$ and $\phi$ with $\phi'$.  There are obvious degeneracy maps $\mathcal{M}_{m+1}\to\mathcal{M}_m$.

\subsection{Heights and the division algebra.}
An isogeny $\iota\from\Fc\to\Fc'$ between formal groups has $F$-height $\height_F(\Fc)=h$ if $\ker\iota$ is a group scheme of rank $q^h$ over $\kk$.  If $\iota$ is only a quasi-isogeny, let $r$ be such that $\varpi^r\iota$ is an isogeny and define the $F$-height of $\iota$ as $\height_F(\varpi^r\iota)-rn$.  For a given $h\in \Z$, we may consider the sub-problem $\mathcal{M}_m^{(h)}\subset\mathcal{M}_m$ of deformations of $\Fc_0$ for which the quasi-isogeny has $F$-height $h$.  Then $$\mathcal{M}_m=\coprod_{h\in\Z}\mathcal{M}_m^{(h)}.$$

Let $\OO_B=\End_{\kk}\Fc_0$.  Then $\OO_B$ is the unique maximal compact subring of $B=\OO_B\otimes_{\OO_F} F$, which is in turn a division algebra over $F$ with invariant $1/n$.  There is a right action of $B^\times$ on $\mathcal{M}_m$ given by $(\Fc,\iota,\phi)^b= (\Fc,\iota\circ b,\phi)$ for $b\in B^\times$.  Let $N\from B\to F$ be the reduced norm.  Since $b\from\Fc_0\to\Fc_0$ has $F$-height $v(N(b))$, we see that the action of $b$ maps $\mathcal{M}_m^{(h)}$ isomorphically onto $\mathcal{M}_m^{(h+v(N(b)))}$.  (Here $v$ is the valuation on $F^\times$ with $v(\varpi)=1$.)

\subsection{The associated rigid spaces $M_m$.}  It is a result of Drinfeld that $\mathcal{M}_m^{(0)}$ is representable by a regular local ring $R_m=R_m^{(0)}$ of dimension $n$, that each map $R_m\to R_{m+1}$ is finite and flat and \'etale over the generic fiber, and that $R_0$ is isomorphic to $\OO_F^{\text{nr}}\llbracket X_1,\dots,X_{n-1} \rrbracket$.  Similarly, each $\mathcal{M}_m^{(j)}$ is representable by a regular local ring $R_m^{(h)}$.  Therefore $\mathcal{M}_m$ has the structure of a formal scheme which is locally formally of finite type over $\hat{\OO}_F^{\text{nr}}$.
Let $M_m$ be the rigid space attached to the generic fiber of $\mathcal{M}_m$.  The morphisms $M_{m+1}\to M_m$ are \'etale and the space $M_0$ is the union of spaces $M_0^{(j)}$ for $j\in \Z$, each of which is the rigid open unit polydisk of dimension $n-1$.  The spaces $M_m$ inherit an action of $B^\times$.

\subsection{The limit problem, action of $\GL_n(F)$.}  The moduli problem $\mathcal{M}_m$ has a right action of $\GL_n(\OO_F/\gp^m)$ given by $(\Fc,\iota,\phi)^g=(\Fc,\iota,\phi\circ g)$.  Therefore the rigid analytic space $M_m$ admits an action of $\GL_n(\OO_F/\gp^m)$.  These actions coalesce into an action of $G=\GL_n(F)$ on the projective system $$M=\lim_{\infty\leftarrow m} M_m.$$  To describe this action, we give an alternate description of $M$.  For the time being, let $M'$ be the functor which assigns to each complete subfield $K\subset \C_F$ containing $\hat{F}^{\text{nr}}$ the set of isomorphism classes of triples $(\Fc,\iota,\alpha)$, where $\Fc/\OO_K$ is a formal $\OO_F$-module ``up to isogeny", $\iota\in \Hom_{\kk}(\Fc_{\kk},\Fc_0)\otimes_{\OO_F}F$ is a quasi-isogeny, and $\alpha\from F^n\to V(\Fc)$ is an isomorphism of $F$-vector spaces.  Here $V(\Fc)=T(\Fc)\otimes_{\OO_F} F$, and $$T(\Fc)=\lim_{\leftarrow} \Fc[\varpi^m](K)$$ is the Tate module.  An isomorphism between two triples $(\Fc,\iota,\alpha)$ and $(\Fc',\iota',\alpha')$ is a {\em quasi-isogeny} $f\in \Hom_{\OO_K}(\Fc,\Fc')\otimes_{\OO_F} F$ carrying $\iota$ to $\iota'$ and $\phi$ to $\phi'$.  We shall call $M'$ the functor of deformations {\em up to isogeny}.

We claim that $M'$ agrees with the functor of points on $M$.  Indeed, suppose a point of $M(K)$ is represented by an inductive system of points $(\Fc,\iota,\phi_m)_{m\geq 0}$ of $M_m(K)$.  Then the Drinfeld level-$m$-structures $\phi_m$ give rise to an isomorphism of $\OO_F$-modules $\alpha_0\from \OO_F^n\to T(\Fc)$ in an evident way.  Let $\alpha=\alpha\otimes 1$ be the extension of this map to an isomorphism $F^n\to V(X)$.  Then $(\Fc,\iota,\alpha)\in M'(K)$.

Going the other way, suppose $(\Fc,\iota,\alpha)$ represents a point of $M'(K)$.  Let $\mathcal{L}=\phi(\OO_F^n)$;  this is a lattice in $V(\Fc)$.   Let $r\in \Z$ be such that $T(\Fc)\subset \varpi^r\mathcal{L}$.  We have an exact sequence $$0\to T(\Fc)\to V(\Fc)\to \Fc[\varpi^\infty]\to 0;$$ let $C$ be the image of $\varpi^r\mathcal{L}$ in $\Fc[\varpi^\infty]$, so that $C\isom\varpi^r\mathcal{L}/T(\Fc)$.  Write $\Fc'$ for the quotient $\Fc/C$, and write $f\in \Hom(\Fc,\Fc')\otimes_{\OO_F} F$ for the quasi-isogeny
$$\xymatrix{
\Fc\ar^{\varpi^r}[r]& \Fc \ar[r]& \Fc/C=\Fc'}.$$  Then we have the commutative square
$$\xymatrix{
V(\Fc) \ar^{V(f)}[r] & V(\Fc') \\
\mathcal{L} \ar@{^{(}->}[u] \ar^{\sim}[r] & T(\Fc') \ar@{^{(}->}[u]
}$$
The isomorphism $\alpha\from \OO_F^n\to \mathcal{L}$ induces isomorphisms $\phi_m\from (\varpi^{-m}\OO_F/\OO_F)^n\to \Fc[\varpi^m](K)$ for each $m\geq 0$.  Let $\iota'=\iota\circ f_{\kk}^{-1}\in\Hom_{\kk}(\Fc_{\kk}',\Fc_0)\otimes_{\OO_F} F$.  Then $f$ gives an isomorphism between the triples $(\Fc,\iota,\alpha)$ and $(\Fc',\iota',V(f)\circ\alpha)$.  The latter triple corresponds in turn to the inductive system $(\Fc',\iota',\phi_m)_{m\geq 0}$.   We conclude that $M=M'$.

It is now evident how to define a right action of $G=\GL_2(F)$ on the rigid space $M$:  Given a triple $(\Fc,\iota,\alpha)$ as in the definition of $M'$ and a matrix $g\in G$, define $(\Fc,\iota,\alpha)\cdot g=(\Fc,\iota,\alpha\circ g)$.  Note that $g$ maps $M^{(h)}$ isomorphically onto $M^{(h-v(\det g))}$.

\subsection{Action of $W_F$} \label{actionofwf} Let $W_F$ be the Weil group of $F$.  Recall that the rigid space $M$ is defined over $\hat{F}^{\text{nr}}$.  There is an action of $W_F$ on $M$ lying over the action of $W_F$ on $\hat{F}^{\text{nr}}$.  We describe this action on the level of $\C_F$-points.  Let $\sigma\in W_F$ and let $x=(\Fc,\iota,\alpha)\in M(\C_F)$.  It is clear how to define the conjugate formal group $\Fc^\sigma$ and level structure $\alpha^\sigma$.  We must now construct a quasi-isogeny $\Fc^\sigma_{\kk}\to\Fc_0$.  Let $W_F\to \Gal(\overline{k}/k)$ be the obvious map, and assume the image of $\sigma$ equals $\Fr^n$ for some $n\geq 0$, where $\Fr$ is the $q$th power map on $\overline{k}$.  Start with the quasi-isogeny $\iota^\sigma\from \Fc^\sigma_{\overline{k}}\to\Fc_0^{\Fr^n}.$  To get a quasi-isogeny with values in $\Fc_0$, we compose this map with the inverse of the natural isogeny $\Fc_0\to\Fc_0^{\Fr^n}$ given by $X\mapsto X^{q^n}$.  This action of $W_F$ commutes with the actions of $\GL_2(F)$ and $B^\times.$

We therefore have an action of $G\times B^\times \times W_F$ on $M$.  Define a homomorphism
\begin{eqnarray*}
\delta\from G\times B^\times \times W_F &\to& F^\times  \\
(g,b,w)&\mapsto& (\det g)^{-1}\times N(b)\times \Art_F^{-1}w,
\end{eqnarray*}
where $\Art_F\from F^\times \to W_F^{\text{ab}}$ is the reciprocity map from local class field theory, normalized so that $\Art_F(\varpi)$ is a geometric Frobenius element.  Then a triple $(g,b,w)\in G\times B^\times \times W_F$ maps $M^{(h)}$ onto $M^{(h+d)}$, where $d=v_F(\delta(g,b,w))$.

\subsection{Connected Components.}  Let $\mathcal{Z}$ be a one-dimensional $\OO_F$-module of height 1 over $\hat{\OO}_{F^{\text{nr}}}$.  By classical Lubin-Tate theory, $\mathcal{Z}$ is unique up to isomorphism.  We summarize here the results of Strauch~\cite{Strauch:ConnComp} concerning the geometrically connected components of the rigid spaces $M_m\otimes\C_F$.

\begin{Theorem} There exists a bijection from $\pi_0(M_m\otimes\C_F)$ onto the set of bases for the rank 1 $(\OO_F/\gp_F^m)$-module $\mathcal{Z}[\varpi^m]$.  This bijection is equivariant for the action of $\GL_n(\OO_F/\gp_F^m)\times \OO_B^\times\times I_F$ if we let an element $(g,b,w)$ act on $\mathcal{Z}[\varpi^m]$ through the homomorphism $(g,b,w)\mapsto\delta(g,b,w)\pmod{\gp^m}$.
\end{Theorem}

Combining this theorem for all $m\geq 1$ gives a description of the set of geometrically connected components of the tower $M$ of rigid spaces:

\begin{Theorem} There exists a bijection $\pi_0(M\otimes \C_F)\isom V(\mathcal{Z})\backslash\set{0}$.  This bijection is equivariant for the action of $\GL_2(F)\times B^\times \times W_F$ if we let an element $(g,b,w)$ act on $V(\mathcal{Z})$ through the homomorphism $\delta\from GL_2(F)\times B^\times\times W_F\to F^\times$.
\end{Theorem}

For a nonzero $\zeta\in V(\mathcal{Z})$, let $M^\zeta\subset M\otimes \C_F$ be the connected component corresponding to $\zeta$.  There is a natural ``valuation" $h\from V(\mathcal{Z})\backslash\set{0}\to \Z$ defined as the least $h\in\Z$ for which $\zeta\in \varpi^hT(\mathcal{Z})$.  Then $M^\zeta\subset M^{h(\zeta)}\otimes\C_F$.

\section{CM points and the Jacquet-Langlands correspondence}
\label{CMpoints}

\subsection{CM Points: Basic Observations.}
It is at this point that we restrict our attention to the case $n=2$.  We abbreviate $A=M_2(F)$;  $B$ is the nonsplit quaternion algebra over $F$.

A deformation $\Fc$ of $\Fc_0$ to $\C_F$ has {\em CM} if $E=\End \Fc\otimes_{\OO_F} F$ is a quadratic field extension of $F$.  Suppose a point $x\in M$ is represented by a deformation $(\Fc,\iota,\alpha)$ up to isogeny.  We say that $x$ has CM by $E$ if $\End \Fc\otimes_{\OO_F} F=E$.  Note that by replacing $\Fc$ by an isogenous formal $\OO_F$-module we may assume that $\End \Fc=\OO_E$.  In that case, $\Fc$ becomes a formal $\OO_E$-module of height 1.  By classical Lubin-Tate theory, there is only one such $\Fc$ up to isomorphism: let this be called $\Fc_E$.   Note that $\Fc_E$ is defined over $\hat{E}^{\text{nr}}$.  Let $M^E\subset M(\overline{F})$ be the set of all points of $M$ with CM by $E$.

If $x=(\Fc_E,\iota,\alpha)\in M^E$, then we naturally have at our disposal embeddings of $E$ into both $A$ and $B$. Indeed, since $V(\Fc)$ is naturally an $E$-vector space of dimension one, we may identify $E$ with a subfield of $\End V(\Fc)$.  On the other hand, $\alpha$ identifies $\End V(\Fc)$ with $A$, so that there is a unique embedding $j_{x,A}\from E\injects A$ for which the identity $\alpha\circ j_{x,A}(e)=V(e)$ holds in $\End V(\Fc)$ for all $e\in E$.  Similarly, $\iota$ gives a unique embedding $j_{x,B}\from E\to B$ for which the appropriate diagram commutes.  Let $j_{x}\from E\to A\times B$ be the diagonal embedding $e\mapsto (j_{x,A}(e),j_{x,B}(e))$.

The following theorem can be deduced from~\cite{Gross:canonical}:
\begin{Theorem}
\label{CMpointbasic}
\mbox{}
\begin{itemize}
\item[(i)] The group $\GL_2(F)\times B^\times$ acts transitively on $M^E$.  The stabilizer in $\GL_2(F)\times B^\times$ of the point $x$ is $j_x(E^\times)$.
\item[(ii)] For all $t\in\GL_2(F)\times B^\times$ we have $j_{x^t}(\beta)=t^{-1}j_x(\beta)t$, all $\beta\in E$.
\end{itemize}
\end{Theorem}

\subsection{Action of $W_F$ on CM Points.} \label{relativeweil}
Recall that the relative Weil group $W_{E/F}$ is the quotient of $W_F$ by the closure of the commutator subgroup of $W_E$.  Thus $W_{E/F}$ is the preimage of $\Z$ in the surjection $\Gal(E^{\text{ab}}/F)\to\Gal(\overline{k}/k)\isom\hat{\Z}$. There is a short exact sequence
\begin{equation}
1 \to E^\times \to W_{E/F} \to \Gal(E/F) \to 1
\end{equation}
representing the fundamental class in $H^2(\Gal(E/F),E^\times)$.  In the interpretation of this $H^2$ as the relative Brauer group of the extension $E/F$, the fundamental class corresponds to the class of $B$ as a central simple algebra over $F$ which is split by $E$.

By classical Lubin-tate theory, adjoining the torsion points of the CM formal group $\Fc_E$ to $E^{\text{nr}}$ yields the maximal abelian extension $E^{\text{ab}}$ of $E$.   Thus the action of $W_F$ on $M^E$ factors through $W_{E/F}$.  We make this action explicit.  Let $x\in M^E$, so that we have embeddings $j_{x,A}$ and $j_{x,B}$ of $E$ into $A$ and $B$, respectively.  Let $\mathcal{N}_{x,A}$ (resp., $\mathcal{N}_{x,B}$) be the normalizer of $j_{x,A}(E^\times)$ in $\GL_2(F)$ (resp., the normalizer of $j_{x,B}(E^\times)$ in $B^\times$).  Then $\mathcal{N}_{x,A}$ and $\mathcal{N}_{x,B}$ are both extensions of $\Gal(E/F)$ by $E^\times$, representing the trivial and nontrivial classes in $H^2(\Gal(E/F), E^\times)$, respectively.  Let $\mathcal{N}_x\subset \GL_2(F)\times B^\times$ be the pullback group in the diagram
$$
\xymatrix{\mathcal{N}_x \ar[r] \ar[d] & \mathcal{N}_{x,A}  \ar[d] \\
\mathcal{N}_{x,B} \ar[r] & \Gal(E/F).
}
$$
Observe that $j_x(E^\times)\subset\mathcal{N}_x$ embeds as a normal subgroup.  The homomorphism $\delta\from \mathcal{N}_x\to F^\times$ factors through $j_x(E^\times)\backslash \mathcal{N}_x$.  Finally note that $j_x(E^\times) \backslash\mathcal{N}_x$, like $W_{E/F}$, is a nonsplit extension of $\Gal(E/F)$ by $E^\times$.  The following proposition, which is a straightforward application of classical Lubin-Tate theory, gives an natural isomorphism between these two groups.

\begin{prop}  For each $x\in M^E$, there is an isomorphism $j_{x,W}\from W_{E/F} \to j_x(E^\times)\backslash\mathcal{N}_x$ with the following properties:
\begin{itemize}
\item[(i)] For all $w\in W_{E/F}$, $x^w = x^{j_{x,W}(w)}$.
\item[(ii)] For an element $\beta\in E=W_E^{\text{ab}}$ we have $$j_{x,W}(w)=j_x(E^\times)(j_{x,A}(\beta),1)=j_x(E^\times)(1,j_{x,B}(\beta)^{-1}).$$
\item[(iii)] For $w\in W_{E/F}$, we have $\delta(j_{x,W}(w))=\Art_F^{-1} w$.
\item[(iv)] For $t\in \GL_2(F)\times B^\times$ and all $w\in W_{E/F}$ we have $j_{x^t,W}(w)=tj_{x,W}(w)t^{-1}$.
\end{itemize}
\end{prop}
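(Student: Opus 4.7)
The plan is to construct $j_{x,W}$ by transport of structure using Theorem~\ref{CMpointbasic}, then verify the four properties by unpacking definitions and invoking classical Lubin-Tate theory applied to the CM formal group $\Fc_E$. Since $\Fc_E$ has $E$-height one, adjoining its torsion to $\hat{E}^{\text{nr}}$ yields $\hat{E}^{\text{ab}}$, so the $W_F$-action on $M^E$ factors through $W_{E/F}$. For $w\in W_{E/F}$, Theorem~\ref{CMpointbasic}(i) produces $(a,b)\in \GL_2(F)\times B^\times$ with $x^w = x\cdot(a,b)$, uniquely determined modulo the stabilizer $j_x(E^\times)$. I define $j_{x,W}(w)$ to be this coset; multiplicativity in $w$ follows from the commutation of the $W_F$- and $(\GL_2(F)\times B^\times)$-actions noted in \S\ref{actionofwf}.

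To see the image lies in $\mathcal{N}_x$, let $\bar{w}\in\Gal(E/F)$ be the image of $w$. The Weil group element $w$ permutes the natural $E$-actions on $V(\Fc)$ and on $\Fc_{\overline{k}}$ by $\bar{w}$, which gives $j_{x^w,A}(\beta)=j_{x,A}(\bar{w}\beta)$ and likewise on the $B$-side; combined with Theorem~\ref{CMpointbasic}(ii), this forces $(a,b)$ to conjugate $j_x(E^\times)$ by $\bar{w}$, so $(a,b)\in\mathcal{N}_x$. Property (i) then holds by construction. For (ii), classical Lubin-Tate for $\Fc_E$ says that $\Art_E(\beta)^{-1}\in W_E^{\text{ab}}$ acts on $V(\Fc_E)$ by multiplication by $\beta$; via $\alpha$ this is precisely the action of $(j_{x,A}(\beta),1)$, and since this representative differs from $(1,j_{x,B}(\beta)^{-1})$ by $j_x(\beta)\in j_x(E^\times)$, both equalities follow. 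For (iii), the description in \S\ref{actionofwf} shows that $j_{x,W}(w)$ and $w$ transport the connected component of $x$ by $\delta(j_{x,W}(w))$ and by $\Art_F^{-1}w$ respectively; since both transports agree by (i), so do these elements of $F^\times$. Property (iv) is immediate from commutativity: $(x^t)^w = (x^w)^t = (x\cdot j_{x,W}(w))\cdot t = x^t\cdot (t^{-1}j_{x,W}(w)t)$.

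The map $j_{x,W}$ is an isomorphism because both $W_{E/F}$ and $j_x(E^\times)\backslash\mathcal{N}_x$ are nonsplit extensions of $\Gal(E/F)$ by $E^\times$ (the former by construction, the latter because $B$ is the nonsplit quaternion algebra), and $j_{x,W}$ is an isomorphism on the $E^\times$-subgroup by (ii) while being compatible with the projection to $\Gal(E/F)$. The main obstacle will be managing normalization conventions carefully — in particular the compatibility between $\Art_E$ and $\Art_F$ under the geometric-Frobenius convention, and whether $\Art_E(\beta)$ acts by $\beta$ or $\beta^{-1}$ on Tate modules — so that the signs in (ii) and (iii) emerge as stated rather than their inverses.
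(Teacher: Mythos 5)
Your construction---defining $j_{x,W}(w)$ as the coset of the element of $\GL_2(F)\times B^\times$ carrying $x$ to $x^w$ (well defined by Theorem~\ref{CMpointbasic}), checking it lands in $\mathcal{N}_x$ via the Galois twist of the CM structure, and verifying (ii)--(iv) using the Lubin--Tate character of $\Fc_E$, the $\delta$-equivariant description of connected components, and commutativity of the actions---is precisely the ``straightforward application of classical Lubin-Tate theory'' the paper invokes without giving further details, so your route is essentially the intended one. The only loose end is the one you flag yourself: pinning down whether $E^\times\isom W_E^{\text{ab}}$ is taken via the geometric or arithmetic normalization (this decides whether (ii) and (iii) come out as stated or with inverses, and whether your transport map is a homomorphism rather than an anti-homomorphism), which is convention bookkeeping rather than a missing idea.
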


\subsection{Lattice Chains and Chain Orders.} \label{latticechains}  Given $x\in M^E$ as above, identify $E$ with a subfield of $A$ via $j_{x,A}$. We consider the sequence $(\Lambda_i)_{i\in \Z}$ of lattices in $F^2$ defined by
 $$\Lambda_i=\alpha^{-1}\left(\gp_E^iT(\Fc_E)\right).$$   Then $(\Lambda_i)$ is an {\em $\OO_F$-lattice chain} in the sense that this collection of lattices is linearly ordered and stable under multiplication by $F^\times$.   Let $\mathfrak{A}_x\subset M_n(F)$ be the subalgebra of matrices which stabilize {\em each} $\Lambda_i$.  We drop the subscript $x$ from the notation when $x$ is fixed in the discussion.   Then $\mathfrak{A}$ is normalized by $E^\times$. Let $\gP\subset \mathfrak{A}$ be the double-sided ideal of matrices mapping $\Lambda_i$ into $\Lambda_{i+1}$ for all $i$.  We remark
 $\mathfrak{A}$ is conjugate to $M_2(\OO_F)$ or $\tbt{\OO_F}{\OO_F}{\gp_F}{\OO_F}$, as $E/F$ is unramified or ramified, respectively.

\subsection{Admissible Pairs}
\label{admissiblepairs}
Assume $E/F$ is a tamely ramified quadratic extension, and that $\chi$ is a character of $E^\times$.
The pair $(E/F,\chi)$ is called {\em admissible} if (1) $\chi$ does not
factor through the norm map $N_{E/F}$, and if (2) $\chi\vert_{U_E^1}$ does not factor through the norm map if $E/F$ is ramified.  A character $\chi$ has {\em level $m$} if it vanishes on $1+\gp_E^{m+1}$ but not on $1+\gp_E^m$;  we say it has {\em essential level $m$} if the minimum level of the characters of the form $\chi\times(\omega\circ N_{E/F})$ is $m$.

We will have use for a simple parametrization of admissible pairs.  First, if $E/F$ is unramified and $\theta$ is a character of $k_E^\times$ for which $\theta\neq\theta^q$, then let $X_\theta$ denote the set of characters $\chi'$ of $E^\times$ of the form $\chi'=\chi\times (\omega\circ N_{E/F})$, where $\chi$ is a character of $E^\times$ whose restriction to $\OO_E^\times$ is inflated from $\theta$, and where $\omega$ is a character of $F^\times$.  Then characters of $X_\theta$ have essential level zero, and every character of essential level zero belongs to some $X_\theta$.  We have that $X_\theta$ and $X_{\theta'}$ are disjoint unless $\theta'\theta^{-1}$ factors through $N_{k_E/k}$, in which case $X_\theta=X_{\theta'}$.

Characters of higher level may be parametrized by certain elements of $E^\times$:

\begin{defn} Let $m\geq 1$.
An element $\alpha\in E^\times$ of valuation $v_E(\alpha)=-m$ is {\em minimal} when it satisfies the criteria:
\begin{itemize}
\item[(i)] If $E/F$ is ramified, then $m$ must be odd.
\item[(i)] If $E/F$ is unramified, then the minimal polynomial of $\varpi_F^m\alpha$ over $F$ is irreducible modulo $\gp_F$.
\end{itemize}
\end{defn}

Fix a character $\psi$ of $F$ of level zero.  Let $\psi_E$ be the character $x\mapsto\psi(\tr_{E/F}x)$;  since $E/F$ is tamely ramified, $\psi_E$ is a character of $E$ of level zero.

Let $m\geq 1$.  If $\alpha\in E^\times$ is a minimal element of valuation $-m$, let $X_\alpha$ be the set of characters $\chi$ of $E^\times$ of the form $\chi'=\chi\times(\omega\circ N_{E/F})$, where $\chi'$ is a character of level $m$ satisfying $\chi(1+x)=\psi_E(\alpha x)$, all $x\in\gp_E^{\floor{m/2}+1}$.  Then each character $\chi\in X_\alpha$ is admissible of essential level $m$.  Every admissible character of essential level $m$ belongs to some $X_\alpha$.


If $(E/F,\chi)$ is an admissible pair, then $\Ind_{E/F}\chi\in\mathcal{G}_2(F)$ is irreducible.
There is a straightforward way of attaching a supercuspidal representation $\pi_\chi$ of $\mathcal{A}_2(F)$ to each admissible pair $(E/F,\chi)$, which already appears in the general case of $\GL_n(F)$ in~\cite{Howe}.

We sketch this construction.  Assume there is no character $\omega$ of $F^\times$ for which $\chi\times\omega\circ N_{E/F}$ has smaller level than $\chi$.  Choose an embedding $E^\times\injects \GL_2(F)$.  There are three cases to consider:

\begin{itemize}
\item $\chi$ has level 0.  Suppose $\chi\in X_\theta$ for a character $\theta$ of $k_E^\times$.  Let $\lambda_\theta$ be the associated cuspidal representation of $\GL_2(\theta)$, inflated to $\GL_2(\OO_F)$.  Let $J_A=E^\times\GL_2(\OO_F)$ and let $\Lambda_\chi$ be the representation of $J$ which extends $\lambda_\theta$ and which agrees with $\chi$ on $F^\times$.
\\

\item $\chi$ has level $m=2r-1>0$.  Suppose $\chi\in X_\alpha$.    Let $J_A=E^\times U_{\mathfrak{A}}^r$.  Define a character $\Lambda_\chi$ of $J$ which agrees with $\chi$ on $E^\times$ and which satisfies $\Lambda(1+x)=\psi_A(\alpha x)$ for $x\in \gP_{\mathfrak{A}}^r$.
\\

\item $\chi$ has level $m=2r$.  Then $E/F$ must be unramified.  Suppose $\chi\in X_\alpha$.  Define a character $\theta$ of  $H=U_E^1U_{\mathfrak{A}}^r$ which agrees with $\chi$ on $U_E^1$ and which satisfies $\theta(1+x)=\psi_A(\alpha x)$ for $x\in \gP_{\mathfrak{A}}^{r+1}$.  Let $J^1=U_E^1U_\mathfrak{A}^{r}$;  then there exists a unique irreducible representation $\eta_\theta$ of $J^1$ of dimension $q$ lying over $\theta$.  Finally let $J_A=E^\times U_\mathfrak{A}^r$;  there is a unique extension $\Lambda_\chi$ of $\eta_\theta$ to $J_A$ which lies over $\eta_\theta$ and which satisfies $\tr\Lambda_\chi(\zeta)=-\chi(\zeta)$ for all roots of unity $\zeta\in\mu_E\backslash\mu_F$.

\end{itemize}

In all cases let $\pi_\chi=\Ind_{J_A}^{\GL_2(F)}\Lambda$.  Then $\pi_\chi$ is a supercuspidal representation of $\GL_2(F)$, and the correspondence $(E/F,\chi)\mapsto\pi_\chi$ is a bijection from the set of equivalence classes of admissible pairs onto the set of tame supercuspidal representations of $\GL_2(F)$.

There is a similar procedure to construct a representation $\pi_\chi'$ of $B^\times$.  Choose an embedding $E\injects B$.  Identify $\OO_B/\gP_B$ with $k_E$ in a way which is compatible with this embedding. Once again, there are three cases to consider:

\begin{itemize}

\item $\chi$ has level 0.  Suppose $\chi\in X_\theta$.  Let $J_B=E^\times \OO_B^\times$ and let $\Lambda_\chi'$ be the character of $J_B$ which agrees with $\chi$ on $E^\times$ and which is the inflation of $\theta$ on $\OO_B^\times$.
\\
\item $\chi$ has level $m=2r-1$ and $E/F$ is ramified, or else $m=2r$ and $E/F$ is unramified.  Suppose $\chi\in X_\alpha$.   Let $f$ be the residual degree of $E/F$.  Let $J_B=E^\times U_B^{rf}$.  Define a character $\Lambda'_\chi$ of $J$ which agrees with $x\mapsto (-1)^{fv_E(x)}\theta(x)$ on $E^\times$ and which satisfies $\Lambda'_\chi(1+x)=\psi_A(\alpha x)$ for $x\in \gP_{B}^{rf}$.
\\
\item $\chi$ has odd level $m$ and $E/F$ is unramified.  Suppose $\chi\in X_\alpha$.  Define a character $\theta$ of $H=U_E^1U_{B}^{m+1}$ which agrees with $\chi$ on $U_E^1$ and which satisfies $\theta(1+x)=\psi_B(\alpha x)$ for $x\in\gP_{B}^{m+1}$.  Let $J^1=U_E^1U_B^{m}$;  then there exists a unique irreducible representation $\eta_\theta$ of $J^1$ of dimension $q$ lying over $\theta$.  Finally let $J_B=E^\times U_B^{m}$;  there is a unique extension $\Lambda_\chi'$ of $\eta_\theta$ to $J_B$ which lies over $\eta_\theta$ and which satisfies $\tr\Lambda_\chi(\zeta)=-\Lambda_\chi(\zeta)$ for all roots of unity $\zeta\in\mu_E\backslash\mu_F$.

\end{itemize}

In all cases let $\pi_\chi'=\Ind_{J_B}^{B^\times}\Lambda'_\chi$.  The following is from~\cite{Henniart:Bushnell}, \S 56:

\begin{Theorem} For all admissible pairs $(E/F,\chi)$, we have $\JL(\pi_\chi)=\pi_{\chi}'$.
\end{Theorem}


\subsection{Filtrations of $\mathfrak{A}$ and $\OO_B$ by $\OO_E$-submodules}
Once again, $A=M_2(F)$ and $E/F$ is a quadratic extension field.  We fix an embedding $E\injects A$.  

A {\em tame corestriction} on $A$ relative to $E/F$ is an $(E,E)$-bimodule homomorphism $s_A\from A\to E$ such that $s(\mathfrak{A})=\OO_E$ for any hereditary $\OO_F$-order in $A$ which is normalized by $E^\times$.  A tame corestriction exists and is unique up to multiplication by $\OO_E^\times$, see~\cite{BushnellKutzko}.  It further satisfies $s_A(\gP^r)=\gp_E^r$ for $r\geq 1$.

In the event that $E/F$ is a tame extension, there is a simple description of $s$.  Let $C$ be the complement of $E$ under the trace pairing $A\times A\to F$, so that $A=E\oplus C$.  Then $s_A$ is the projection of $A$ onto $E$ with respect to this decomposition.

For an integer $m\geq 0$, define $V^m_A\subset\mathfrak{A}$ to be the $\OO_E$-submodule
$$V^{m}_A=s_A^{-1}(\gp_E^m)\cap \gP^{\floor{(m+1)/2}}.$$  We remark that $V^0_A=\mathfrak{A}$.  The module $V^m_A$ is closed under multiplication, and in fact no smaller choice of exponent $\floor{(m+1)/2}$ allows for this property.  Consequently the set $1+V^m_A$ is a
{\em subgroup} of $\mathcal{A}^\times$.

Carrying this idea further, define an $\OO_E$-submodule $W_A^m\subset V_A^m$ by $$W_A^m=s_A^{-1}(\gp_E^m)\cap\gP^{\floor{m/2}+1}.$$  Then $W_A^m$ is closed under multiplication and also enjoys the property:
\begin{equation}\label{goodenough}
s(W_A^mW_A^m)\subset\gp_E^{m+1}.
\end{equation}

Similar constructions can be carried out on the division algebra $B$.  Given $x\in M^E$, identify $E$ with a subfield of $B$ via $j_{x,B}$.  Let $\OO_B$ be the ring of integers in $B$ and let $\gP_B$ be its maximal ideal.  A tame corestriction $s_B\from B\to E$ is an $(E,E)$-bimodule homomorphism such that $s_B(\OO_B)=\OO_E$ and $s(\gP_B^r)=\gp_E^{\ceil{r/f}}$ for all $r\geq 1$, where $f$ is the degree of the extension of residue fields $k_E/k$.

Once again, if $E/F$ is tamely ramified, then $s_B$ is the projection of $B$ onto $E$ relative to the reduced trace $\tr_{B/F}$.

For $m\geq 0$, we define
\begin{eqnarray*}
V_B^{m}&=&s^{-1}(\gp_E^m)\cap \gP_B^{r}\\
W_B^m&=&s^{-1}(\gp_E^m)\cap \gP_B^{r'}
\end{eqnarray*}
where $r$ is the least nonnegative integer for which the module $V_B^m$ so defined is closed under multiplication, and $r'$ is the least nonnegative integer for which the module $W_B^m$ so defined satisfies $s_B(W_B^mW_B^m)\subset \gp_E^{m+1}$.
Explicitly:  The value of $r$ is $2\floor{m/2}$ if $E/F$ is unramified and $\floor{(m+1)/2}$ otherwise.   The value of $r'$ is $m+1$ if $E/F$ is unramified and $\floor{m/2}+1$ if $E/F$ is ramified.

We collect some trivial bookkeeping results:
\begin{prop}  \label{bookkeeping} The dimension of the $k_E$-vector spaces $V_A^m/W_A^m$ and $V_B^m/W_B^m$ are given by the following table:
\begin{center}
\begin{tabular}{ccccc}
 & \multicolumn{2}{c}{$E/F$ ramified} &  \multicolumn{2}{c}{$E/F$ unramified} \\
 & $m$ odd & $m$ even & $m$ odd & $m$ even \\
$\dim V_A^m/W_A^m$ & 0 & 1 & 0 & 1 \\
$\dim V_B^m/W_B^m$ & 0 & 1 & 1 & 0
\end{tabular}
\end{center}
On the other hand, the $k_E$-vector spaces $W_A^m/V_A^{m+1}$ and $W_B^m/V_B^{m+1}$ are always 1-dimensional.
\end{prop}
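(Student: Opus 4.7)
My plan is to prove both assertions by a direct case-by-case computation, using the filtrations by powers of $\gP$ (resp.\ $\gP_B$) and the fact that $s$ is projection onto $E$ with respect to the trace complement. I would organize the eight cases of the first table according to whether $r'(m) - r(m)$ equals $0$, $1$, or $2$.

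The case $r' = r$ (which occurs for odd $m$ in $A$ and in $B$ with $E/F$ ramified) is immediate: $V^m = W^m$ tautologically, so the dimension is $0$. For each case with $r' = r+1$, the quotient $V^m/W^m$ injects into the graded piece $\gP^{r}/\gP^{r+1}$ (resp.\ $\gP_B^r/\gP_B^{r+1}$), and its image coincides with the kernel of the map $s\from \gP^r/\gP^{r+1} \to \gp_E^a/\gp_E^{a+1}$ induced by the corestriction, where $a$ is the $E$-valuation of $s(\gP^r)$. The dimension count then rests on standard facts: the graded piece has $k_E$-dimension $2$ in all $A$-cases and in the $B$-ramified case (using $[k_B:k_F]=2$), and $k_E$-dimension $1$ in the $B$-unramified case (where $k_E = k_B$); the target $\gp_E^a/\gp_E^{a+1}$ is always $1$-dimensional over $k_E$; and the induced map is surjective because $s(\gP^r) = \gp_E^a$ and $s(\gP^{r+1}) = \gp_E^{a+1}$ by the stated properties of $s$. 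The kernel is therefore $1$-dimensional or $0$-dimensional accordingly, matching the table entries.

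The delicate case is $B$ with $E/F$ unramified and $m$ odd, where $r' = r+2$. Here I would filter $V^m \supset V^m \cap \gP_B^{r+1} \supset W^m$ and examine the two successive graded quotients. The key observation is that, in the unramified setting, $s_B(\gP_B^{2\ell+1}) = s_B(\gP_B^{2\ell+2}) = \gp_E^{\ell+1}$, so $s_B$ induces the zero map on the middle graded piece $\gP_B^{2\ell+1}/\gP_B^{2\ell+2}$; this entire $1$-dimensional piece therefore contributes to $V^m/W^m$, while the outer piece $\gP_B^{2\ell}/\gP_B^{2\ell+1}$ contributes nothing because $s_B$ is already an isomorphism onto $\gp_E^\ell/\gp_E^{\ell+1}$ there.

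For the second assertion I would first check numerically that $r(m+1) \geq r'(m)$ in every case, so that $V^{m+1} \subset W^m$ and the quotient is sensible; then $s$ induces an isomorphism $W^m/V^{m+1} \isom \gp_E^m/\gp_E^{m+1} \isom k_E$. Surjectivity follows from $s(W^m) = \gp_E^m$ (a short check from $s(\gP^{r'(m)}) = \gp_E^{r'(m)}$ together with $r'(m) \leq m$ when $m \geq 1$), and injectivity follows because any element of $W^m$ with $s$-image in $\gp_E^{m+1}$ automatically lies in $\gP^{r(m+1)}$ and hence in $V^{m+1}$. The main obstacle is the bookkeeping of the $r' = r+2$ case: once one correctly tracks on which graded piece $s$ is zero versus surjective, everything reduces to counting dimensions.
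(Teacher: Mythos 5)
Your treatment of the first table is sound: the paper gives no proof (it states the proposition as trivial bookkeeping), and your division into the cases $r'-r=0,1,2$, with the graded-piece computation and the observation that $s_B$ kills $\gP_B^{2\ell+1}/\gP_B^{2\ell+2}$ in the unramified case, is the natural verification and is correct (note only that the table requires $m\geq 1$ — for even $m$ in effect $m\geq 2$ — since at $m=0$ one has $V_A^0/W_A^0=\mathfrak{A}/\gP$, which is $2$-dimensional; your implicit use of $m\geq a+1$ when identifying the image with $\ker\bar{s}$ is exactly where this enters).

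The genuine gap is in your two numerical claims for the second assertion, and both fail in the unramified $B$-case. With the paper's values $r_B(m)=2\floor{m/2}$, $r_B'(m)=m+1$: for $m$ even one has $r_B(m+1)=m<m+1=r_B'(m)$, so the inequality ``$r(m+1)\geq r'(m)$ in every case'' is false, $\gP_B^{r(m+1)}\not\subset\gP_B^{r'(m)}$, and your justification of $V_B^{m+1}\subset W_B^m$ collapses at exactly this step; likewise $r_B'(m)=m+1>m$, so ``$r'(m)\leq m$'' is false and does not yield $\gp_E^m\subset\gP_B^{r'(m)}$. Both conclusions you want are nevertheless true, but for reasons your argument does not supply: the exponents on $\gP_B$ and on $\gp_E$ live on different scales, since $v_B\vert_E=2v_E$ in the unramified case. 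Thus $\gp_E^m\subset\gP_B^{2m}\subset\gP_B^{m+1}$ for $m\geq 1$, which gives $s_B(W_B^m)=\gp_E^m$ and hence surjectivity; and for the containment $V_B^{m+1}\subset W_B^m$ with $m$ even one should use the decomposition $\gP_B^{j}=\gp_E^{\ceil{j/2}}\oplus\gp_E^{\floor{j/2}}\varpi_B$: the passage from $\gP_B^{m}$ to $\gP_B^{m+1}$ tightens only the $E$-component, so an element of $\gP_B^{m}$ whose $s_B$-image lies in $\gp_E^{m+1}$ automatically lies in $\gP_B^{m+1}$, i.e.\ in $W_B^m$. (Your injectivity step is fine, since there one only needs $r'(m)\geq r(m+1)$, which does hold in all cases.) With these two repairs your isomorphism $W^m/V^{m+1}\isom\gp_E^m/\gp_E^{m+1}$ goes through in every case.
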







\subsection{Certain subgroups of $\GL_2(F)\times B^\times$}  In this section we build the subgroups $\mathcal{K}_{x,m}^1$ mentioned in \S\ref{outline}.  Assume $E/F$ is a tame quadratic extension with ramification degree $e$. Let $x\in M^E$ be given.  Identify $E$ with a subfield of $M_2(F)\times B$ via the embedding $j_x$.

For $m\geq 0$, define the product vector spaces $V^m=V_A^m\times V_B^m$, $W^m=W_A^m\times W_B^m$.  Since $V^m$ is closed under multiplication in $A\times B$, $1+V^m$ is a subgroup of $\GL_2(F)\times B^\times$.  The subgroup $1+V^{m+1}\subset 1+V^m$ is normal; we let $\Gr_m$ be the quotient.  

By Prop.~\ref{bookkeeping}, we have $\dim_{k_E}W^m/V^{m+1}=2$, whereas $$\dim_{k_E} V^m/W^m=\begin{cases} 0,& \text{$E/F$ ramified and $m$ odd} \\ 2,&\text{$E/F$ ramified and $m$ even}\\1,& \text{$E/F$ unramified.}\end{cases}$$ When $m\geq 1$, $\Gr_m$ is a two-step nilpotent group, for there is an exact sequence 
\begin{equation}\label{VWsequence}
0\to \frac{W^m}{V^{m+1}}\to \Gr_m \to\frac{V^m}{W^m}\to 0.
\end{equation}

In \S\ref{actionofwf} we defined a homomorphism $\delta\from \GL_2(F)\times B^\times\to W_F\to F^\times$;  in this section we will use the same letter $\delta$ to denote the restriction of this homomorphism to $\GL_2(F)\times B^\times$.  Let $\Gr_m^1$ denote the kernel of the homomorphism $\Gr_m\to\delta(1+V^m)/\delta(1+V^{m+1})$ induced by   $\delta$.  By analyzing the effect of $\delta$ on the vector spaces on either side of $\Gr_m$ in~\eqref{VWsequence} we arrive at:
\begin{lemma} \label{GR1} Assume $m\geq 1$.
\begin{itemize}
\item[(i)]  If $E/F$ is ramified and $m$ is odd, then $\Gr_m^1\isom W^m/V^{m+1}$ is a $k$-vector space of dimension 2.
\item[(ii)] If $E/F$ is ramified and $m$ is even, then $\Gr_m^1\isom V^m/V^{m+1}$ is a $k$-vector space of dimension 2.
\item[(iii)] If $E/F$ is unramified, then $\Gr_m^1$ is a nontrivial extension of $V^m/W^m$ by a $k$-vector subspace of $W^m/V^{m+1}$ of dimension 1.
\end{itemize}
\end{lemma}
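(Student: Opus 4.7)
The plan is to use the exact sequence \eqref{VWsequence} together with the explicit formula
$$\delta(1+v_A, 1+v_B) = \det(1+v_A)^{-1} N(1+v_B) = \bigl(1+\tr_{A/F}(v_A) + \det v_A\bigr)^{-1}\bigl(1+\tr_{B/F}(v_B) + N v_B\bigr),$$
combined with the identities $\tr_{A/F}(v_A) = \tr_{E/F}(s_A(v_A))$ and $\tr_{B/F}(v_B) = \tr_{E/F}(s_B(v_B))$, which hold here because $E/F$ is tame and so the tame corestrictions $s_A$, $s_B$ agree with the orthogonal projections onto $E$ under the reduced trace pairings.

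The first step is to record standard formulas for $\tr_{E/F}(\gp_E^j)$ when $E/F$ is a tame quadratic extension: $\tr_{E/F}(\gp_E^j) = \gp_F^j$ when $E/F$ is unramified, and $\tr_{E/F}(\gp_E^j) = \gp_F^{\lceil j/2 \rceil}$ when $E/F$ is ramified (using $p\neq 2$). Combined with the estimates $\det(v_A)\in\gp_F^r$ for $v_A\in\gP^r$ and the analogous estimate for $N(v_B)$ on $\gP_B^r$, this pins down the filtration level of $\delta(1+V^m)$ and $\delta(1+V^{m+1})$ in $F^\times$, and hence the target of $\bar\delta$.

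The second step is to evaluate $\bar\delta$ separately on the central subgroup $W^m/V^{m+1}$ and on the quotient $V^m/W^m$, and to assemble the kernel via the exact sequence. In case (i), $V^m=W^m$ by Prop.~\ref{bookkeeping}, so $\Gr_m=W^m/V^{m+1}$; the identity $\tr_{E/F}(\gp_E^m)=\tr_{E/F}(\gp_E^{m+1})=\gp_F^{(m+1)/2}$ (which holds for $m$ odd ramified because the trace of an odd power of a tamely ramified uniformizer vanishes) forces $\bar\delta$ to be trivial, whence $\Gr_m^1=\Gr_m$. Cases (ii) and (iii) reduce to similar but finer computations of the image of $\bar\delta$ on explicit representatives of $V^m/V^{m+1}$, tracking the additional contributions of $\det(v_A)$ and $N(v_B)$ at the critical filtration level; case (iii) further requires identifying $\Gr_m^1$ as a nonsplit extension of $V^m/W^m$ by a 1-dimensional subspace of $W^m/V^{m+1}$.

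The main obstacle lies in case (iii): beyond the dimension count, one must verify that the extension is nontrivial. This is established via the commutator formula $[1+v_1,1+v_2]\equiv 1+(v_1v_2-v_2v_1)\pmod{V^{m+1}}$ applied to representatives in $V^m\setminus W^m$; one shows that the resulting element of $W^m/V^{m+1}$ is nonzero and precisely spans the 1-dimensional kernel of $\bar\delta|_{W^m/V^{m+1}}$, so the extension cannot split. This nonsplitting will be essential in \S\ref{JLrealization} for constructing (analogously to the classical Heisenberg construction) the irreducible representations of $\mathcal{K}_{x,m}^1$ that realize the Jacquet-Langlands correspondence.
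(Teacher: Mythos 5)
The paper offers no written argument for this lemma beyond the directive to analyze $\delta$ on the two layers of \eqref{VWsequence}, and your plan is exactly that analysis, so the strategy is the intended one. Case (i) is essentially fine, with one small repair: equality of the trace images $\tr_{E/F}(\gp_E^m)=\tr_{E/F}(\gp_E^{m+1})=\gp_F^{(m+1)/2}$ does not by itself make $\bar\delta$ trivial; you need the two subgroups of $F^\times$ to coincide, which follows from the containment $\delta(1+V^m)\subseteq U_F^{(m+1)/2}$ (the trace, $\det$ and reduced-norm terms all land in $\gp_F^{(m+1)/2}$) together with $\delta(1+V^{m+1})\supseteq N_{E/F}(1+\gp_E^{m+1})=U_F^{(m+1)/2}$.

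The genuine gap is in (ii) and (iii), which you only promise to carry out, and where the one concrete claim you do make fails. Run your own computation in case (ii): $\delta(1+V^m)=U_F^{m/2}$ and $\delta(1+V^{m+1})=U_F^{m/2+1}$, so the target of $\bar\delta$ has order $q$, while by Prop.~\ref{bookkeeping} the group $\Gr_m$ has order $q^4$; the kernel therefore has order $q^3$, so it is neither $2$-dimensional nor isomorphic to $V^m/V^{m+1}$. In case (iii), a class in $W^m/V^{m+1}$ is a pair $(e_A,e_B)$ with $e_A,e_B\in\gp_E^m/\gp_E^{m+1}$, and $\bar\delta(e_A,e_B)\equiv 1+\tr_{E/F}(e_B-e_A)\pmod{\gp_F^{m+1}}$; its kernel is cut out by one $k$-linear condition in a $4$-dimensional $k$-space, hence is $3$-dimensional, not $1$-dimensional as you assert, and in particular it contains every diagonal class $j_x(1+e)$, $e\in\gp_E^m$, since $\delta\circ j_x\equiv 1$. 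These diagonal classes are exactly the discrepancy between your computation and the statement: they are killed only after passing to $\mathcal{Q}^1=\mathcal{K}^1/\mathcal{L}^1$ (they lie in $U_E^1\subseteq\mathcal{L}$), and it is the image of $\Gr_m^1$ there --- the group actually identified and used in \S\ref{JLrealization} --- that has the advertised shape: $V^m/W^m$ of dimension $2$ in case (ii), and in case (iii) a Heisenberg-type extension of $V^m/W^m$ by the trace-zero line, which is indeed spanned by your commutators $(1+v_1)(1+v_2)(1+v_1)^{-1}(1+v_2)^{-1}\equiv 1+(v_1v_2-v_2v_1)$ on the $A$- or $B$-side according to the parity of $m$. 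So the plan as written, executed literally inside $\Gr_m$, produces groups of order $q^3$ and $q^5$ and cannot deliver (ii) and (iii) as stated; to close the gap you must work modulo the diagonal $1+\gp_E^m$ classes (equivalently, compute in $\mathcal{Q}^1$), and you must actually use the quadratic contributions $\det(c_A)$ and $N_{E/F}(c_B)$ at the critical level $\gp_F^{m/2}$ resp.\ $\gp_F^{m}$, which you mention but never exploit.
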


We now define subgroups $\mathcal{K}_{x,m}$, $\mathcal{P}_{x,m}$, $\mathcal{L}_{x,m}$ of $\GL_2(F)\times B^\times$ by
\begin{eqnarray*}
\mathcal{K}_{x,m}&=&E^\times (1+V^m) \\
\mathcal{P}_{x,m}&=&F^\times U_E^1 (1+W^m) \\
\mathcal{L}_{x,m}&=&F^\times U_E^1 (1+V^{m+1})
\end{eqnarray*}
Then $\mathcal{K}_{x,m}\supset\mathcal{P}_{x,m}\supset\mathcal{L}_{x,m}$.  
Let $\mathcal{K}_{x,m}^1$ be the kernel of the homomorphism $\delta$ restricted to $\mathcal{K}_{x,m}$, and similarly for $\mathcal{L}_{x,m}^1$.  

We observe the following facts concerning $\mathcal{K}_{x,m}$ and $\mathcal{L}_{x,m}$.  
\begin{prop}\label{Kxm} 
\mbox{}
\begin{itemize}
\item[(i)] For $t\in\GL_2(F)\times B^\times$ we have $\mathcal{K}_{x^{t},m}=t^{-1}\mathcal{K}_{x,m}t$, and similarly for $\mathcal{K}_{x,m}^1$.
\item[(ii)] $\mathcal{L}_{x,m}$ is normal in $\mathcal{K}_{x,m}$.
\item[(iii)] There is a split exact sequence $1\to \Gr_m^1\to \mathcal{K}_{x,m}^1/\mathcal{L}_{x,m}^1\to E^\times/F^\times U_E^1\to 1$.
\item[(iv)] $\mathcal{N}_x$ normalizes the groups $\mathcal{K}_{x,m}$ and $\mathcal{L}_{x,m}$.
\end{itemize}
\end{prop}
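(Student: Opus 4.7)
The plan is to deduce (i), (ii), and (iv) as largely formal consequences of the definitions and the already-assembled structural results, reserving the real work for (iii). The ingredients I will use throughout are the bimodule and closure properties $V^m\cdot V^n\subset V^{m+n}$ and $\gp_E\cdot V^m\subset V^{m+1}$; the identity $V^m\cap j_x(E)=j_x(\gp_E^m)$ for $m\geq 1$, which follows because the tame corestrictions restrict to the identity on $E$; and the vanishing $\delta|_{j_x(E^\times)}=1$, since $\det j_{x,A}(e)=N(j_{x,B}(e))=N_{E/F}(e)$.

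For (i), the lattice chain defining $\mathfrak{A}_x$ is transported by conjugation via $t_A$ to the one defining $\mathfrak{A}_{x^t}$, whence $\gP, s_A, V^m_A, W^m_A$ all conjugate accordingly; on the $B$-side, $\OO_B, \gP_B$ are independent of $x$ and $s_B, V^m_B, W^m_B$ conjugate by $t_B$. Combined with $j_{x^t}(E^\times)=t^{-1}j_x(E^\times)t$ from Theorem \ref{CMpointbasic}(ii), this gives the result for $\mathcal{K}_{x,m}$ and $\mathcal{L}_{x,m}$; for $\mathcal{K}_{x,m}^1$ use that $\ker\delta$ is normal. For (iv), an element $\tau\in\mathcal{N}_x$ above the nontrivial class of $\Gal(E/F)$ acts on $j_x(E^\times)$ by Galois conjugation, preserving $j_x(F^\times), j_x(U_E^1)$, and $j_x(\gp_E^m)$. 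Standard results on hereditary orders give that $\tau$ also normalizes $\mathfrak{A}_x$ (and hence $\gP_x$); conjugation by $\tau$ intertwines $s_A$ with its Galois twist, which still carries $V^m$ (and $W^m$) to itself because $\gp_E^m$ is Galois-invariant. The $B$-side is analogous, and (iv) follows.

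For (ii), since $\mathcal{K}_{x,m}$ is generated by $E^\times$ and $1+V^m$, it suffices to check that each of these normalizes the three generators $F^\times, U_E^1, 1+V^{m+1}$ of $\mathcal{L}_{x,m}$. The action of $E^\times$ is via an abelian group on its subgroups $F^\times, U_E^1$, and via the $(E,E)$-bimodule structure on $V^{m+1}$. For $u=1+v\in 1+V^m$: the subgroup $F^\times$ is central; for $e=1+\epsilon\in U_E^1$ (so $\epsilon\in\gp_E$), the commutator expansion $ueu^{-1}e^{-1}=1+(\epsilon v-v\epsilon)+\cdots$ lies in $1+(\gp_E V^m+V^m\gp_E)+\cdots\subset 1+V^{m+1}$ by the closure relations; and $1+V^{m+1}$ is preserved by $V^m V^{m+1}+V^{m+1} V^m\subset V^{m+1}$.

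Part (iii) is the main event. Define $p\from\mathcal{K}_{x,m}\to E^\times/F^\times U_E^1$ by $e(1+v)\mapsto [e]$. This is well defined modulo $U_E^1$ because $E^\times\cap(1+V^m)=1+\gp_E^m\subset U_E^1$, and a homomorphism because $E^\times$ normalizes $1+V^m$. Since $p(\mathcal{L}_{x,m})=1$, the map $p$ descends to $\bar p\from\mathcal{K}_{x,m}^1/\mathcal{L}_{x,m}^1\to E^\times/F^\times U_E^1$. The inclusion $j_x(E^\times)\hookrightarrow\mathcal{K}_{x,m}^1$ (using $\delta|_{j_x(E^\times)}=1$) is visibly both a surjection onto $E^\times/F^\times U_E^1$ and a splitting of $\bar p$. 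For the kernel, write $\mathcal{K}_{x,m}^1=j_x(E^\times)(1+V^m)^1$ and $\mathcal{L}_{x,m}^1=F^\times U_E^1(1+V^{m+1})^1$ (with superscript $1$ denoting intersection with $\ker\delta$); the kernel of $\bar p$ is then $F^\times U_E^1(1+V^m)^1/F^\times U_E^1(1+V^{m+1})^1$, which by the first isomorphism theorem applied to $(1+V^m)^1$ is to be identified with $\Gr_m^1$ as described in Lemma \ref{GR1}. The main obstacle will be precisely this final identification, requiring the case-by-case analysis of Lemma \ref{GR1} (distinguishing ramified versus unramified $E/F$ and even versus odd $m$) and the dimension count in Proposition \ref{bookkeeping} to confirm that the $(1+\gp_E^m)$-contribution is already accounted for in the structure of $\Gr_m^1$.
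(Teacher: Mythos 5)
Parts (i), (ii) and (iv) of your argument are fine in substance, but one of your announced ``ingredients'' is false as stated: it is not true that $V^m\cdot V^n\subset V^{m+n}$. Writing $v=s(v)+c$ with $c$ in the trace-complement $C$, the product of two $C$-components lands back in $E$ with valuation only about $\lfloor (m+1)/2\rfloor+\lfloor (n+1)/2\rfloor$, so for example $V^2_A\cdot V^1_A\not\subset V^3_A$ (take both factors of the form $\varpi\gamma$ with $\gamma\in C\cap\mathfrak{A}^\times$). Fortunately the only containments you actually invoke --- $V^mV^m\subset V^m$, $\gp_E V^m+V^m\gp_E\subset V^{m+1}$, and $V^mV^{m+1}+V^{m+1}V^m\subset V^{m+1}$ --- are true (the last because $\lfloor(m+1)/2\rfloor+\lfloor(m+2)/2\rfloor=m+1$), so (ii) survives, but you should prove these directly rather than cite the false graded statement.

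The genuine gap is in (iii), at exactly the step you defer. Your map $\bar p$ and its splitting by $j_x(E^\times)$ are correct, but you never compute the intersection $(1+V^m)^1\cap F^\times U_E^1(1+V^{m+1})^1$, and this is where all the content sits. By the identity you yourself quote, $E^\times\cap(1+V^m)=1+\gp_E^m=U_E^m$, that intersection equals $U_E^m(1+V^{m+1})^1$, \emph{not} $(1+V^{m+1})^1$. Hence the kernel of $\bar p$ is $(1+V^m)^1/U_E^m(1+V^{m+1})^1$, i.e.\ the image of $\Gr_m^1$ in $\mathcal{Q}^1$ after killing the (nontrivial, central) image of the diagonal $U_E^m/U_E^{m+1}\cong k_E$; the natural map $\Gr_m^1\to\mathcal{K}_{x,m}^1/\mathcal{L}_{x,m}^1$ is not injective, since $U_E^m\subset U_E^1\subset\mathcal{L}_{x,m}$ while $U_E^m\not\subset 1+V^{m+1}$. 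So the ``final identification'' you postpone cannot be carried out toward the target as literally stated: with $\Gr_m^1$ defined as the kernel of $\Gr_m\to\delta(1+V^m)/\delta(1+V^{m+1})$, the orders do not match --- e.g.\ for $E/F$ ramified and $m$ odd one has $\delta(1+V^m)=\delta(1+V^{m+1})=U_F^{(m+1)/2}$, so $\Gr_m^1=W^m/V^{m+1}$ has order $q^2$, whereas the later explicit description $\mathcal{Q}^1\cong\F_q\times\Z/2\Z$ forces the kernel of $\bar p$ to have order $q$. A correct proof must therefore either prove the sequence with $\Gr_m^1$ replaced by its quotient by the image of $U_E^m$ (equivalently, define the graded piece using $U_E^m(1+V^{m+1})$ as denominator), or explain the convention under which the two agree; as written, your plan of matching against Lemma~\ref{GR1} and Prop.~\ref{bookkeeping} would be an attempt to prove a statement that the intersection computation above shows is off by exactly this central $k_E$.
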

We will gather more information on the groups $\mathcal{K}_{x,m}^1$ and $\mathcal{K}_{x,m}^1/\mathcal{L}_{x,m}^1$ in \S\ref{JLrealization}.

\subsection{Characters of $\mathcal{K}$}
\label{charactersofK}

In this paragraph we will see why the groups $\mathcal{K}_{x,m}^1$ are important:  The quotients $\mathcal{K}_{x,m}^1/\mathcal{L}_{x,m}^1$ admit special irreducible characters whose induction to $\GL_2(F)\times B^\times$ realizes the Jacquet-Langlands correspondence for exactly those supercuspidal representations of $\GL_2(F)$ arising from admissible pairs $(E/F,\chi)$ for which $\chi$ has essential level $m$.  

Assume $m\geq 1$, let $E/F$ be a tame quadratic extension field, let $x\in M^E$, and identify $E$ with a subfield of $A\times B$ via $j_x$.  Let $\alpha\in E^\times$ be a minimal element of valuation $-m$.  This forces $m$ to be odd if $E/F$ is ramified.  Define an $(E,E)$-linear map $s\from M_2(F)\times B\to E$ by
$$s(a,b)=s_A(a)-s_B(b),$$ so that $s$ vanishes on $E$.  Note that $s(W^mW^m)\subset\gp_E^{m+1}$.

Write $\mathcal{K}$ for $\mathcal{K}_{x,m}$, and similarly for $\mathcal{L}$ and $\mathcal{P}$.

Let $\psi_\alpha$ be the character of $\mathcal{P}$ defined by the rules\begin{eqnarray*}
\psi_\alpha(F^\times U_E^1)&=&1\\
\psi_\alpha(1+w)&=&\psi_E(\alpha s(w)),\;w\in W^m
\end{eqnarray*}
This is well-defined because $s(W^m W^m)\subset\gp_E^{m+1}$.  Note that $\psi_\alpha$ vanishes on $\mathcal{L}$.

We know define a certain irreducible representation $\tau_\alpha$ of $\mathcal{K}$ which lies over $\psi_\alpha$.  If $E/F$ is ramified, then $\mathcal{K}=E^\times\mathcal{P}$.  In this case, we take $\tau_\alpha$ to be the character of $\mathcal{K}$ which extends $\psi_\alpha$ and which satisfies
\begin{equation}
\tau_\alpha(\beta)=(-1)^{v_E(\beta)},\;\beta\in E^\times.
\end{equation}

For $E/F$ unramified we have the following

\begin{prop}
\label{existenceoftau} There exists a unique representation $\tau_\alpha$ of $\mathcal{K}$ lying over $\psi_\alpha$ which has the property that $\tr\tau_\alpha(\zeta)=-1$ for a root of unity $\zeta\in E^\times\backslash F^\times$.
\end{prop}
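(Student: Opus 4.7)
The plan is to build $\tau_\alpha$ by the standard Heisenberg--Clifford machinery from the theory of types, as in Bushnell--Kutzko~\cite{BushnellKutzko}, then pin down the remaining twist ambiguity using the trace condition. I write $\mathcal{K},\mathcal{P},\mathcal{L}$ for $\mathcal{K}_{x,m},\mathcal{P}_{x,m},\mathcal{L}_{x,m}$, and $\mathcal{K}^1,\mathcal{P}^1,\mathcal{L}^1$ for the respective kernels of $\delta$.

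First, I identify the Heisenberg structure. Combining Prop.~\ref{Kxm}(iii) with Lemma~\ref{GR1}(iii), in the unramified case $\mathcal{K}^1/\mathcal{L}^1$ is a split extension of $E^\times/F^\times U_E^1\cong k_E^\times/k^\times$ by $\Gr_m^1$, and $\Gr_m^1$ is itself a non-abelian group of order $q^3$ whose centre is exactly the image of $\mathcal{P}^1/\mathcal{L}^1$, of order $q$. Since $\psi_\alpha$ vanishes on $\mathcal{L}$, it descends to a character of $\mathcal{P}/\mathcal{L}$; its restriction to the centre is non-trivial, because the minimality of $\alpha$ forces $w\mapsto\psi_E(\alpha s(w))$ to be non-zero on the $1$-dimensional central piece of $W^m/V^{m+1}$. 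The finite Stone--von Neumann theorem applied to $\Gr_m^1$ then produces a unique irreducible representation $\eta_0$ of $\Gr_m^1$ with this central character; it has dimension $q$ and its character vanishes off the centre.

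Next, extend to $\mathcal{K}$. The conjugation action of $E^\times/F^\times U_E^1$ on $\Gr_m^1$ preserves the central character, and because $E^\times/F^\times U_E^1$ is cyclic of order $q+1$ (so $H^2$ with $\overline{\Q}_\ell^\times$-coefficients vanishes), standard Clifford theory extends $\eta_0$ to an irreducible representation $\eta$ of $\mathcal{K}^1/\mathcal{L}^1$; the set of such extensions forms a torsor under the character group of $E^\times/F^\times U_E^1$. Inflating to $\mathcal{K}^1$ and then extending further to $\mathcal{K}$ is constrained by the requirement that $\tau_\alpha|_{\mathcal{P}}$ be $\psi_\alpha$-isotypic, which fixes the extension on $F^\times U_E^1(1+W^m)$; what remains is exactly a $(q+1)$-fold torsor of twists by characters of $E^\times/F^\times U_E^1$.

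Finally, I use the trace condition to pick out the unique $\tau_\alpha$. For one fixed extension $\tau$, twisting by a character $\chi$ of $E^\times/F^\times U_E^1$ multiplies $\tr\tau(\zeta)$ by $\chi(\zeta)$. Since $\zeta$ has non-trivial image in the cyclic group $E^\times/F^\times U_E^1$, the map $\chi\mapsto\chi(\zeta)$ exhausts $\mu_{q+1}$, so it suffices to show that for some (equivalently, any) reference extension $\tau$, the value $\tr\tau(\zeta)$ is a non-zero $(q+1)$-th root of unity; exactly one twist will then give $\tr\tau_\alpha(\zeta)=-1$. Because $\eta_0$ has trace $0$ on non-central elements of $\Gr_m^1$, a Mackey-type expansion reduces $\tr\tau(\zeta)$ to an explicit Gauss-sum over the $\zeta$-fixed locus inside $\Gr_m^1$, and the main obstacle is to verify that this sum is indeed non-vanishing and of absolute value $1$. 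This is a classical computation in the theory of types (found in~\cite{BushnellKutzko}), and its adaptation to the combined group $\GL_2(F)\times B^\times$ is essentially verbatim, drawing on the hypothesis $p\neq 2$ to ensure the requisite non-degeneracy.
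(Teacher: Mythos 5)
Your structural analysis is sound and matches the paper's framing of this as ``an exercise in representation theory'': the image of $\mathcal{P}^1$ in $\mathcal{K}^1/\mathcal{L}^1$ is central of order $q$, $\psi_\alpha$ is nontrivial there by minimality of $\alpha$, Stone--von Neumann gives the unique $q$-dimensional representation of the unipotent part, and since $E^\times/F^\times U_E^1$ is cyclic of order $q+1$ it extends, the extensions (equivalently, the irreducible representations of $\mathcal{K}$ lying over $\psi_\alpha$) forming a torsor under the $q+1$ twists. (Your assertion that the commutator pairing on $\Gr_m^1$ is nondegenerate with centre exactly the image of $\mathcal{P}^1$ is used without argument, but it is available from the paper's explicit matrix model of $\mathcal{Q}^1$ as the Borel subgroup of the unitary group, so I regard that as minor.)

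The gap is in the final step, and it is the heart of the proposition. First, your uniqueness argument claims that $\chi\mapsto\chi(\zeta)$ exhausts $\mu_{q+1}$ ``since $\zeta$ has non-trivial image'' in the cyclic group $E^\times/F^\times U_E^1$ of order $q+1$; this is false unless the image of $\zeta$ is a \emph{generator}. A root of unity $\zeta\in\mu_E\setminus\mu_F$ may have image of any order $d\mid q+1$ with $d>1$, in which case $\chi\mapsto\chi(\zeta)$ hits only $\mu_d$ and is not injective, so the trace condition at that single $\zeta$ does not single out one twist. (The proposition is in fact used later with the condition imposed at \emph{every} root of unity in $E^\times\setminus F^\times$, which is what restores uniqueness; your argument as written does not.) Second, and more seriously, existence is not established: knowing that a reference extension has $\tr\tau(\zeta)$ nonzero of absolute value $1$ --- which is all your deferred ``classical Gauss-sum computation'' would give --- does not show that any twist attains the exact value $-1$, even at one $\zeta$, let alone simultaneously at all roots of unity in $E^\times\setminus F^\times$. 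The precise value $-1$ is the entire content of the statement (it is what later matches $\Lambda'_{\chi}$, whose trace at $\zeta$ is $-\chi(\zeta)$, and what matches the curve side). The paper obtains both existence and the value at once by identifying $\mathcal{K}^1/\mathcal{L}^1$ with the group $Q$ of \S\ref{Hermitian} and the image of $\mathcal{P}^1$ with $P$, and then reading the statement off Prop.~\ref{hermitiandecomp}, i.e.\ off a Lefschetz fixed-point count on the Hermitian curve $X^q+X=Y^{q+1}$. To complete your route you would need either that geometric input or an honest character computation for the extensions of the Heisenberg representation at torus elements, showing the distinguished extension has trace exactly $-1$ there.
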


\begin{proof}
This is an exercise in representation theory.  It can also be deduced from Prop.~\ref{hermitiandecomp} once one notices that $\mathcal{K}^1/\mathcal{L}^1$ is isomorphic to the group $Q$ described in~\ref{Hermitian}, while the image of $\mathcal{P}^1$ is isomorphic to the subgroup $P\subset Q$.
\end{proof}

Returning to the general case, we let $\tau_\alpha^1$ be the restriction of $\tau_\alpha$ to $\mathcal{K}^1$.

\begin{lemma}
\label{delta} Every irreducible representation of $\mathcal{K}$ lying over $\tau_\alpha^1$ is of the form $\tau_\alpha\times\omega\circ\delta$ for some character $\omega$ of $F^\times$.
\end{lemma}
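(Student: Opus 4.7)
The plan is to apply a standard Clifford-theoretic tensor-twist argument, exploiting that $\mathcal{K}^1$ is normal in $\mathcal{K}$ with \emph{compact abelian} quotient. Let $\sigma$ be an irreducible smooth representation of $\mathcal{K}$ containing $\tau_\alpha^1 := \tau_\alpha\vert_{\mathcal{K}^1}$ in its restriction. First I would form the $\mathcal{K}$-representation $\pi := \sigma \otimes \tau_\alpha^\vee$; by Hom-tensor adjunction, the hypothesis translates into $\pi^{\mathcal{K}^1} = \Hom_{\mathcal{K}^1}(\tau_\alpha^1,\sigma) \neq 0$. The subspace $\pi^{\mathcal{K}^1}$ carries a natural smooth action of $\mathcal{K}/\mathcal{K}^1$.

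Next I want to decompose $\pi^{\mathcal{K}^1}$ into characters. Note that $\delta$ vanishes on $j_x(E^\times)$ (as $\det j_{x,A}(\beta) = N_{E/F}(\beta) = N j_{x,B}(\beta)$), so $E^\times \subset \mathcal{K}^1$. Consequently $\mathcal{K}/\mathcal{K}^1$ is the image of the compact open subgroup $1+V^m$ under $\delta$, hence is a compact abelian group. Its smooth representations are therefore semisimple direct sums of unitary characters, and so some character $\bar\chi$ of $\mathcal{K}/\mathcal{K}^1$ occurs in $\pi^{\mathcal{K}^1}$. Writing $\chi$ for its inflation to $\mathcal{K}$, this gives a nonzero element of $\Hom_{\mathcal{K}}(\tau_\alpha \otimes \chi, \sigma)$. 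Since $\tau_\alpha$ is irreducible (by construction in the ramified case, and by Prop.~\ref{existenceoftau} in the unramified case), so is $\tau_\alpha \otimes \chi$, and irreducibility of $\sigma$ promotes this nonzero map to an isomorphism $\sigma \cong \tau_\alpha \otimes \chi$.

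Finally I must verify that $\chi$ itself arises from a character of $F^\times$. The map $\delta$ factors through an injection $\mathcal{K}/\mathcal{K}^1 \hookrightarrow F^\times$ with image the closed subgroup $\delta(1+V^m) \subset F^\times$; the character $\bar\chi$ is then identified with a character of this closed subgroup, and by Pontryagin duality for locally compact abelian groups every such character extends to a character $\omega$ of $F^\times$, as desired. The only potential friction I foresee is routine bookkeeping: confirming the irreducibility of $\tau_\alpha$ (which is what Prop.~\ref{existenceoftau} is designed for), compactness of $\mathcal{K}/\mathcal{K}^1$ (which rests on the observation $E^\times \subset \mathcal{K}^1$), and that the representations in question are smooth so that the character decomposition is valid. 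None of these presents a genuine obstacle.
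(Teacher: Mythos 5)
Your argument is correct and is essentially the formal Clifford-theoretic argument the paper's own (very terse) proof gestures at: the paper likewise identifies $\mathcal{K}/\mathcal{K}^1$ with $\delta(\mathcal{K})=U_F^r\subset F^\times$ and deduces the lemma from the fact that $\mathcal{K}$ stabilizes $\tau_\alpha^1$, there via a section $d\colon U_F^r\to\mathcal{P}$ of $\delta$, in your version via twisting by $\tau_\alpha^\vee$, taking $\mathcal{K}^1$-invariants, and extending the resulting character of $\delta(\mathcal{K})$ to $F^\times$. Your write-up just supplies the routine details the paper leaves implicit, so no further comment is needed.
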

\begin{proof}  Consider the map $\delta\from\mathcal{K}\to F^\times$.  The image $\delta(\mathcal{K})$ is a certain group $U_F^r$.  Explicitly, $r=(m+1)/2$ if $E/F$ is ramified and $r=m$ if $E/F$ is unramified.  There exists a section $d\from U_F^r\to \mathcal{P}$ of $\delta$, so that $\delta(d(x))=x$ for all $x\in U_F^r$.   The lemma follows formally from the fact that $\delta(U_F^r)$ normalizes the representation $\tau_\alpha^1$.
\end{proof}

Let $J_A$ and $J_B$ be as in~\ref{admissiblepairs}.
\begin{lemma} \label{normalizes} The group $J_A\times J_B$ normalizes the group $\mathcal{K}$ and the representation $\tau_\alpha$.
\end{lemma}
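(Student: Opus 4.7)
The plan is to verify the two assertions separately: first that $J_A \times J_B$ normalizes $\mathcal{K} = \mathcal{K}_{x,m}$ as a subgroup of $\GL_2(F) \times B^\times$, and second that $\tau_\alpha^g \isom \tau_\alpha$ as representations of $\mathcal{K}$ for every $g \in J_A \times J_B$. Throughout, I would decompose $g = (g_A,g_B)$ according to $J_A = E^\times U_{\mathfrak{A}}^r$ and the analogous description of $J_B$ from \S\ref{admissiblepairs}, reducing to checking the statements for the two types of generators: the central $j_x(E^\times)$-factors and the unipotent factors $U_{\mathfrak{A}}^r$ and $U_B^{r'}$.

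For the subgroup statement, the diagonal $j_x(E^\times) \subset \mathcal{K}$ is centralized (since $E$ is abelian) by the $E^\times$-parts of $J_A$ and $J_B$, so normalization is automatic there. For a unipotent piece $u = 1+x \in U_{\mathfrak{A}}^r$ and $\beta \in E^\times$, the key computation is
\[
u\, j_{x,A}(\beta)\, u^{-1} \;=\; j_{x,A}(\beta)\bigl(1 + j_{x,A}(\beta)^{-1}[x, j_{x,A}(\beta)](1+x)^{-1}\bigr),
\]
and the commutator $[x, j_{x,A}(\beta)]$ is annihilated by the tame corestriction $s_A$ (since $s_A$ is $(E,E)$-linear and $E$ is commutative) and has $\gP$-valuation at least $r \geq \floor{(m+1)/2}$ by the case analysis of $J_A$; hence the correction lies in $V_A^m$. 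An analogous computation on the $B$-side with the exponent $r'$ dictated by \S\ref{admissiblepairs} produces a correction in $V_B^m$. Crucially the \emph{same} $\beta$ appears in both coordinates, so the pair lies in $j_x(E^\times)(1+V^m) = \mathcal{K}$. Conjugation of the factor $1+V^m$ itself is handled in parallel, using that $V^m$ is closed under multiplication.

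For the representation statement, I would invoke the uniqueness characterization of $\tau_\alpha$: in the ramified case, $\tau_\alpha$ is determined as the character of $\mathcal{K}$ extending $\psi_\alpha$ and sending $\beta \in E^\times$ to $(-1)^{v_E(\beta)}$; in the unramified case, by Proposition~\ref{existenceoftau}, $\tau_\alpha$ is the unique irreducible representation over $\psi_\alpha$ with $\tr\tau_\alpha(\zeta)=-1$ for $\zeta\in\mu_E\setminus\mu_F$. It then suffices to check that the conjugate $\tau_\alpha^g$ satisfies the same characterization. The $E^\times$-data is preserved because $g\beta g^{-1} \in \beta(1+V^m)$ by the subgroup argument, and both $v_E$ and (in the unramified case) the trace on the relevant coset are invariant. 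The compatibility with $\psi_\alpha$ reduces to $\alpha\cdot s(gwg^{-1}-w) \in \gp_E$ for $w \in W^m$, which follows from the same commutator bound: $gwg^{-1}-w$ picks up an extra factor of $\gP^r$, and since $v_E(\alpha) = -m$ while $s$ takes $\gP^{m+r}$ into $\gp_E^{m+r}$ (or the analogous $B$-side bound), the product lands in $\gp_E$ as required.

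The main obstacle is the unramified case, where $\tau_\alpha$ is $q$-dimensional rather than a character. Here the cleanest route is to exploit the Heisenberg-type structure used in Proposition~\ref{existenceoftau}: the quotient $\mathcal{K}^1/\mathcal{L}^1$ is isomorphic to the group $Q$ of \ref{Hermitian}, and $\tau_\alpha$ arises by Stone--von Neumann from the central character of its Heisenberg quotient. The $J_A \times J_B$-action by conjugation must be shown to preserve both the symplectic form on $\mathcal{K}^1/\mathcal{P}^1$ (automatic from associativity of conjugation) and the central character $\psi_\alpha$ (the commutator bound above). Stone--von Neumann uniqueness then yields $\tau_\alpha^g \isom \tau_\alpha$, completing the proof.
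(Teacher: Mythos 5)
The paper states this lemma without proof, so there is no argument of the paper to compare against; judging your proposal on its own terms, the group-theoretic half and the ramified (character) case are essentially fine, but the unramified case has a genuine gap at its most delicate point. Your commutator bounds do show that conjugation by $J_A\times J_B$ preserves $\mathcal{K}$, $\mathcal{P}$, $\mathcal{L}$ and the character $\psi_\alpha$, and hence (by Stone--von Neumann applied to the image of $1+V^m$ in $\mathcal{Q}$) that $\tau_\alpha^g$ and $\tau_\alpha$ agree after restriction to the Heisenberg part. But that only pins $\tau_\alpha^g$ down up to a twist by one of the $q+1$ characters of $\mathcal{K}/F^\times U_E^1(1+V^m)\isom \mu_E/\mu_F$: the content of Prop.~\ref{existenceoftau} is precisely that the extension of the Heisenberg representation to $\mathcal{K}$ is \emph{not} unique and is singled out by the condition $\tr\tau_\alpha(\zeta)=-1$ for $\zeta\in\mu_E\setminus\mu_F$. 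Your sentence asserting that ``the trace on the relevant coset'' is invariant is exactly the unproved step, and the concluding appeal to Stone--von Neumann does not supply it.

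Concretely, one must show $\tr\tau_\alpha(g\zeta g^{-1})=-1$, i.e.\ that $g\zeta g^{-1}=\zeta c$ with $c\in 1+V^m$ becomes, in $\mathcal{Q}$, conjugate to $\zeta$ times a central element killed by $\psi_\alpha$. This is not a consequence of the linear estimate $s(gwg^{-1}-w)\in\gp_E^{m+1}$: writing $x$ for the unipotent part of $g$ on the relevant side, the degree-two term of $c-1$ contributes $s(x^2)$-type terms lying exactly at the critical level ($s(\gP^{2r})=\gp_E^{m}$ when $m=2r$, and similarly on the $B$-side for $m$ odd), so the ``na\"ive'' central component of $c$ is generically nontrivial for $\psi_\alpha$. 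What saves the statement is that when one conjugates by the unipotent radical of $\mathcal{Q}$ to kill the noncentral component of $c$ (possible since $\zeta^{q-1}\neq 1$), the central offset produced by this straightening cancels the quadratic term exactly (an identity of the shape $\lambda(\lambda-1)^q=1-\lambda$ for $\lambda^{q+1}=1$). So the lemma is true, but its unramified half requires this quadratic cancellation, which your argument neither states nor proves; as written, your proof only yields $\tau_\alpha^g\isom\tau_\alpha\otimes\chi$ for some character $\chi$ of $\mathcal{K}$ trivial on $\mathcal{P}(1+V^m)$.
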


Let $Y_\alpha$ be the set of characters $\chi$ of $E^\times$ of the form $\chi=\chi'\times(\omega\circ N_{E/F})$, where $\chi'(1+x)=\psi_E(\alpha x)$ for all $x\in\gp_E^m$.   Thus $Y_\alpha$ is a larger set of characters than $X_\alpha$.

\begin{prop} \label{J} Let $\Lambda$ be an irreducible representation of $J_A\times J_B$.  Then $\Lambda$ lies over the representation $\tau_\alpha^1$ of $\mathcal{K}^1$ if and only if there exists a character $\chi\in Y_\alpha$ for which $\Lambda\isom\Lambda_\chi\otimes\check{\Lambda}_\chi'$.
\end{prop}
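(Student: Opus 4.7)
The plan is to exhibit $\Lambda_\chi\otimes\check{\Lambda}_\chi'$ as an irreducible extension from $\mathcal{K}^1$ to $J_A\times J_B$ of $\tau_\alpha^1$, and then to identify all such extensions by combining Lemma~\ref{delta} with a torsor argument. First I would verify $\mathcal{K}\subset J_A\times J_B$: the defining inclusions $V_A^m\subset\gP^{\floor{(m+1)/2}}$ and $V_B^m\subset\gP_B^{r}$ give $1+V_A^m\subset U_{\mathfrak{A}}^{r_A}$ and $1+V_B^m\subset U_B^{r_Bf}$ for the exponents $r_A,r_B,f$ used in constructing $\Lambda_\chi,\Lambda_\chi'$, while $E^\times$ embeds in $J_A\times J_B$ via $j_x=(j_{x,A},j_{x,B})$. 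Next I would restrict $\Lambda_\chi\otimes\check{\Lambda}_\chi'$ to $\mathcal{P}\cap\mathcal{K}^1$ and identify the restriction with $\psi_\alpha$: on $F^\times U_E^1$ both tensor factors agree with $\chi$ (the sign $(-1)^{fv_E(\cdot)}$ in $\Lambda_\chi'$ vanishes there), so the tensor is trivial, and on $1+W^m$ one uses that $\psi_A(\alpha w_A)=\psi_E(\alpha s_A(w_A))$, since $\alpha\in E$ and the tame corestriction $s_A$ is the projection along the trace complement, together with the analogous identity for $s_B$, observing that the contragredient on the $B$-factor converts $s_A(w_A)+s_B(w_B)$ into $s_A(w_A)-s_B(w_B)=s(w)$.

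To pass from $\mathcal{P}\cap\mathcal{K}^1$ up to all of $\mathcal{K}^1$, I split on ramification. In the ramified case both $\Lambda_\chi$ and $\Lambda_\chi'$ are characters, so the tensor is a character; evaluating at $\varpi_E$ gives $(-1)^{v_E(\varpi_E)}=-1$ from the sign factor $(-1)^{fv_E(\cdot)}$ in $\Lambda_\chi'$, which matches $\tau_\alpha(\varpi_E)$. In the unramified case exactly one of $\Lambda_\chi,\Lambda_\chi'$ is $q$-dimensional, so $\Lambda_\chi\otimes\check{\Lambda}_\chi'$ is irreducible of dimension $q$; at a diagonally embedded root of unity $\zeta\in\mu_E\setminus\mu_F$ the formula $\tr\Lambda_\chi(\zeta)=-\chi(\zeta)$ (or the analogous formula for $\Lambda_\chi'$) combines with $\chi(\zeta)\overline{\chi(\zeta)}=1$ to produce trace value $-1$, so by the uniqueness assertion in Prop.~\ref{existenceoftau} the restriction to $\mathcal{K}$ coincides with $\tau_\alpha$ and hence to $\mathcal{K}^1$ with $\tau_\alpha^1$. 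This proves the ``if'' direction.

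For the converse, I would apply Lemma~\ref{delta}: given any irreducible $\Lambda$ of $J_A\times J_B$ lying over $\tau_\alpha^1$, the restriction $\Lambda|_{\mathcal{K}}$ decomposes into summands of the form $\tau_\alpha\cdot(\omega\circ\delta)$ for characters $\omega$ of $F^\times$, and the twisting action $\chi\mapsto\chi\cdot(\omega\circ N_{E/F})$ on $Y_\alpha$ is matched against these twists via the compatibility of $\delta|_{\mathcal{K}}$ with $N_{E/F}$. Irreducibility of both sides plus a dimension count (using $\dim\Lambda_\chi\cdot\dim\Lambda_\chi'$ against $[J_A\times J_B:\mathcal{K}]\cdot\dim\tau_\alpha$) then forces $\Lambda\isom\Lambda_\chi\otimes\check{\Lambda}_\chi'$ for the corresponding $\chi\in Y_\alpha$.

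The hard part is the unramified case of the ``if'' direction, where a direct comparison between the Heisenberg representation $\eta_\theta$ underlying the $q$-dimensional factor and the abstractly constructed $\tau_\alpha$ of Prop.~\ref{existenceoftau} would be combinatorially intricate; invoking the trace-at-$\zeta$ uniqueness assertion in that proposition lets us bypass the comparison entirely. A secondary subtlety is the careful bookkeeping of the sign $(-1)^{fv_E(\cdot)}$ in $\Lambda_\chi'$ against the values of $\tau_\alpha$ on $E^\times/F^\times U_E^1$, but this reduces to the parity statements already verified above.
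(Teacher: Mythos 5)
Your ``if'' direction is fine, and in fact you carry out carefully what the paper dismisses as ``a simple matter'': the computation of $\Lambda_\chi\otimes\check{\Lambda}_\chi'$ on $F^\times U_E^1(1+W^m)$ via $\psi_A(\alpha w_A)=\psi_E(\alpha s_A(w_A))$, and the use of the trace condition $\tr\Lambda_\chi(\zeta)=-\chi(\zeta)$ together with the uniqueness in Prop.~\ref{existenceoftau} to identify the restriction with $\tau_\alpha$ in the unramified case, are exactly the right checks.

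The converse, however, is where the real content of the proposition lies, and your sketch of it has a genuine gap. The proposed ``dimension count using $\dim\Lambda_\chi\cdot\dim\Lambda_\chi'$ against $[J_A\times J_B:\mathcal{K}]\cdot\dim\tau_\alpha$'' is not even well posed: $J_A\times J_B$ contains $E^\times\times E^\times$ while $\mathcal{K}$ contains only the diagonal copy of $E^\times$ (times a compact open piece), so the index $[J_A\times J_B:\mathcal{K}]$ is infinite. More importantly, no count of dimensions can identify an arbitrary irreducible $\Lambda$ of $J_A\times J_B$ lying over $\tau_\alpha^1$ with a representation of the special shape $\Lambda_\chi\otimes\check{\Lambda}_\chi'$. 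The character $\psi_\alpha$ only constrains the \emph{difference} $s_A(w_A)-s_B(w_B)$ on $1+W^m$ and says nothing directly about the separate factors, so you must first write $\Lambda=\Lambda_1\otimes\Lambda_2$ and disentangle this information: this is the paper's argument with the $\chi^1$-isotypic subspace $W\subset\Lambda_1$ and the decomposition $u=\bigl(us(u)^{-1}\bigr)s(u)$ for $u\in U_{\mathfrak{A}}^{\floor{m/2}+1}$, which shows that $\Lambda_1$ lies over a character $1+x\mapsto\psi_A(\beta x)$ of $U_{\mathfrak{A}}^{\floor{m/2}+1}$ (and $\Lambda_2$ over the matching character of the corresponding congruence subgroup of $\OO_B^\times$), whence $\Lambda_1\isom\Lambda_\chi$ from the explicit structure $J_A=E^\times U_{\mathfrak{A}}^{\floor{m/2}+1}$. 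One then still has to force $\Lambda_2\isom\check{\Lambda}'_\chi$ \emph{for the same} $\chi$ (and not, say, a character agreeing with $\chi$ only on $U_E^1$), which requires reading off the action of the diagonal $E^\times\subset\mathcal{K}^1$ from $\tau_\alpha$ --- the sign $\beta\mapsto(-1)^{v_E(\beta)}$ in the ramified case and the trace value $-1$ at roots of unity in the unramified case. You use these trace data in your ``if'' direction but never in the converse, and your appeal to Lemma~\ref{delta} plus ``matching twists'' only removes the $\omega\circ\delta$ ambiguity (the paper does the same, also invoking Lemma~\ref{normalizes} to see that a single $\omega$ occurs); it does not substitute for the congruence-subgroup analysis and the diagonal $E^\times$ comparison that actually pin down both tensor factors.
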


\begin{proof}
It is a simple matter to show that for all $\chi\in Y_\alpha$ we have that the restriction of the representation $\Lambda_\chi\otimes\check{\Lambda}_\chi'$ to $\mathcal{K}^1$ equals $\tau_{\alpha}^1$.  Therefore assume $\Lambda$ is an irreducible representation of $J_A\times J_B$ lying over $\tau_\alpha^1$.

By Lemma~\ref{delta}, the restriction of $\Lambda$ to $\mathcal{K}$ decomposes into representations of the form $\tau_\alpha\times(\omega\circ\delta)$.  By Lemma~\ref{normalizes}, $J_A\times J_B$ stabilizes the subspace of $\Lambda$ on which $\mathcal{P}$ acts by a particular character $\psi_\alpha\times(\omega\circ\delta)$; since $\Lambda$ is irreducible, only one such representation may appear.  By replacing $\Lambda$ with $\Lambda\otimes(\omega^{-1}\circ \delta)$ we may assume that $\omega$ is the trivial character, so that $\Lambda$ lies over $\tau_\alpha$.

Write $\Lambda=\Lambda_1\otimes\Lambda_2$, where $\Lambda_1$ and $\Lambda_2$ are irreducible representations of $J_A$ and $J_B$, respectively.   Let $\chi^1$ be a character of $U_E^1$ which appears in $\Lambda_1$, and let $W$ be the largest subspace of $\Lambda_1$ on which $U_E^1$ acts through $\chi^1$.
 If $u\in U_{\mathfrak{A}}^{\floor{m/2}+1}$, then for $w\in W$ we have  $\Lambda(u)w=\Lambda_1(us(u)^{-1})\Lambda_1(s(u))w=\chi^1(s(u))\Lambda_1(us(u)^{-1})w$.  On the other hand, $us(u)^{-1}\in 1+W_A^m$ and therefore $(us(u)^{-1},1)\in \mathcal{P}$.   Since $\Lambda$ lies over $\psi_\alpha$ we have that $\Lambda_1(us(u)^{-1})$ acts by the scalar $\psi_\alpha(us(u)^{-1},1)=\psi(\tr_{A/F} \alpha (us(u)^{-1}-1))=1$.  We find that $U_{\mathfrak{A}}^{\floor{m/2}+1}$ acts on $W$ through the character $u\mapsto \chi^1(s(u))$.  Let $\beta\in E^\times$ be such that $\chi^1(s(1+x))=\psi_A(\beta x)$ for $x\in \gP_{\mathfrak{A}}^{\floor{m/2}+1}$.  Since $J_A$ normalizes this character, $J_A$ preserves $W$, and since $\Lambda_1$ is irreducible, $\Lambda_1=W$.  Therefore $\Lambda_1$ lies over the character of $U_{\mathfrak{A}}^{\floor{m/2}+1}$ given by $x\mapsto \psi_A(\beta x)$ for $x\in\gP_{\mathfrak{A}}^{\floor{m/2}+1}$.  By a similar argument, $\Lambda_2$ lies over the character of $U_{B}^{f(m+1)/2}$ given by $1+x\mapsto\psi_B(-\beta x)$ for $x\in \gP_B^{f(m+1)/2}$.

We first consider the case where $m$ is odd.   We have that $\Lambda_1$ lies over the character $\eta$ of $U^1_EU^{(m+1)/2}_{\mathfrak{A}}$ which is $u\mapsto \chi^1(u)$ on $U^1_E$ and $1+x\mapsto \psi_A(\alpha x)$ for $1+x\in U^{(m+1)/2}_{\mathfrak{A}}$.  But $J_A=E^\times U_{\mathfrak{A}}^{(m+1)/2}$, and it is easy to see that the only representations of $J_A$ which lie over $\eta$ are precisely the characters $\Lambda_\chi$ where $\chi$ is a character of $E^\times$ extending $U^1_E$.   Thus $\Lambda_1\isom\Lambda_\chi$ for a character $\chi\in X_\alpha$.

Now we claim that $\Lambda_2\isom\check{\Lambda}'_\chi$.  If $E/F$ is ramified, then $\tau_\alpha$ was defined to restrict to $E^\times$ as the character $\beta\mapsto (-1)^{v_E(\beta)}$.  Therefore the group $j_{x,B}(E^\times)\subset J_B$ must act on $\Lambda_2$ through the character $\beta\mapsto (-1)^{v_E(\beta)}\theta(\beta^{-1})$.  We also have that $\Lambda_2$ lies over the character $1+x\mapsto \psi_B(-\beta x)$ of $U_B^{(m+1)/2}$.  By definition we have $\Lambda_2\isom\check{\Lambda}_\chi'$.

Now suppose $E/F$ is unramified.  Since $\tau_\alpha$ lies over the identity character of $U^1_E$, we find that $\Lambda_2$ lies over the character $\check{\chi}\vert_{U^1_E}$.  Now let $\zeta$ be a root of unity in $E^\times\backslash F^\times$; we have $\tr \Lambda(\zeta)=\tr \Lambda_1(\zeta)\Lambda_2(\zeta) =-1$, implying that $\tr\Lambda_2(\zeta)=-\check{\chi}(\zeta)$.  Finally, we have already seen that $\Lambda_2$ lies over the character $1+x\mapsto \psi_B(-\beta x)$ of $U_B^{m+1}$.  We conclude from the description of $\Lambda'_\chi$ in~\ref{admissiblepairs} that $\Lambda_2\isom\check{\Lambda}'_\chi$.

The argument is similar in the case of $m$ even and $E/F$ unramified, except the roles of $\Lambda_1$ and $\Lambda_2$ are reversed.
\end{proof}

\begin{prop} \label{pipiprime} Let $\pi$ and $\pi'$ be smooth irreducible representations of $\GL_2(F)$ and $B^\times$, respectively.  Then $\Hom_{\GL_2(F)\times B^\times}\left(\pi\otimes\pi',\Ind_{\mathcal{K}^1}^{\GL_2(F)\times B^\times}\tau_\alpha^1\right)$ has dimension 1 if $\pi=\pi_\chi$ and $\pi'=\check{\pi}'_\chi$ for some $\chi\in Y_\alpha$.  Otherwise, it vanishes.
\end{prop}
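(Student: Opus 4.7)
The plan is to use Frobenius reciprocity to transfer the Hom computation first to the subgroup $\mathcal{K}^1$ and then to $J_A\times J_B$, where Proposition \ref{J} identifies the relevant irreducible constituents.

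By Frobenius reciprocity,
$$\Hom_G\bigl(\pi\otimes\pi',\Ind_{\mathcal{K}^1}^{G}\tau_\alpha^1\bigr)\cong\Hom_{\mathcal{K}^1}(\pi\otimes\pi',\tau_\alpha^1),$$
where $G=\GL_2(F)\times B^\times$. Inspection of the definitions in \S\ref{admissiblepairs} and \S\ref{charactersofK} shows that $\mathcal{K}^1\subseteq\mathcal{K}=E^\times(1+V^m)\subseteq J_A\times J_B$ in each of the three level cases for $\chi$, the essential inclusion being $1+V_A^m\subseteq U_{\mathfrak{A}}^{\floor{(m+1)/2}}$ and its $B$-counterpart. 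Induction in stages then yields
$$\Hom_{\mathcal{K}^1}(\pi\otimes\pi',\tau_\alpha^1)\cong\Hom_{J_A\times J_B}\bigl(\pi\otimes\pi',\Ind_{\mathcal{K}^1}^{J_A\times J_B}\tau_\alpha^1\bigr).$$
By Proposition \ref{J}, the irreducible subrepresentations of $\Ind_{\mathcal{K}^1}^{J_A\times J_B}\tau_\alpha^1$ are precisely the $\Lambda_\chi\otimes\check\Lambda'_\chi$ for $\chi\in Y_\alpha$; moreover, the proof of \ref{J} records $(\Lambda_\chi\otimes\check\Lambda'_\chi)|_{\mathcal{K}^1}=\tau_\alpha^1$, and Frobenius reciprocity for the inclusion $\mathcal{K}^1\subseteq J_A\times J_B$ then shows that each such constituent occurs with multiplicity exactly one. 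Matching multiplicities gives
$$\Hom_G\bigl(\pi\otimes\pi',\Ind_{\mathcal{K}^1}^{G}\tau_\alpha^1\bigr)\cong\bigoplus_{\chi\in Y_\alpha}\Hom_{J_A}(\pi,\Lambda_\chi)\otimes\Hom_{J_B}(\pi',\check\Lambda'_\chi).$$

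The final step is standard type theory: since $\pi_\chi=\Ind_{J_A}^{\GL_2(F)}\Lambda_\chi$ is supercuspidal and $(J_A,\Lambda_\chi)$ is a Bushnell--Kutzko cuspidal type for its inertial class, $\Hom_{J_A}(\pi,\Lambda_\chi)$ is one-dimensional when $\pi\cong\pi_\chi$ and vanishes otherwise, and the analogous statement holds on the $B^\times$-side for $\check\pi'_\chi$. At most one term of the sum is therefore nonzero, and it is one-dimensional precisely when $\pi\cong\pi_\chi$ and $\pi'\cong\check\pi'_\chi$. The main technical point is the vanishing of $\Hom_{J_A}(\pi,\Lambda_\chi)$ when $\pi$ is smooth irreducible but not equivalent to $\pi_\chi$ for any $\chi\in Y_\alpha$ --- in particular for non-supercuspidal $\pi$ --- which is the statement that the cuspidal type $(J_A,\Lambda_\chi)$ separates Bernstein components.
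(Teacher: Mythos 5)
Your overall route—Frobenius reciprocity plus induction in stages to reduce to $J_A\times J_B$, then Proposition~\ref{J} to identify the constituents lying over $\tau_\alpha^1$, then the irreducibility of $\pi_\chi=\Ind_{J_A}^{\GL_2(F)}\Lambda_\chi$ and $\check{\pi}'_\chi=\Ind_{J_B}^{B^\times}\check{\Lambda}'_\chi$ to pin down $\pi$ and $\pi'$—is the same as the paper's. However, there is a genuine gap at the final step, in the sentence ``At most one term of the sum is therefore nonzero.'' The type-theoretic fact that $\Hom_{J_A}(\pi,\Lambda_\chi)$ is one-dimensional when $\pi\cong\pi_\chi$ and zero otherwise does \emph{not} imply that at most one $\chi\in Y_\alpha$ contributes: distinct characters in $Y_\alpha$ could a priori yield the same pair $(\pi_\chi,\check{\pi}'_\chi)$. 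Indeed, the $\Gal(E/F)$-conjugate character $\chi^\sigma$ always satisfies $\pi_{\chi^\sigma}\cong\pi_\chi$ and $\pi'_{\chi^\sigma}\cong\pi'_\chi$, so if $\chi^\sigma$ also lay in $Y_\alpha$ (with $\chi^\sigma\neq\chi$), two summands would be nonzero and your computation would give dimension $2$, contradicting the statement you are proving. Your argument never rules this out.

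This is exactly the point to which the paper devotes the second half of its proof: since $\pi_\chi\cong\pi_{\chi'}$ forces $\chi'\in\{\chi,\chi^\sigma\}$, one must show $\chi^\sigma\notin Y_\alpha$. The paper does this using the minimality of $\alpha$: if $\chi^\sigma\in Y_\alpha$, then $\psi_E(\alpha x^\sigma)=\psi_E(\alpha x)$ for all $x\in\gp_E^m$, whence $\alpha-\alpha^\sigma\in\gp_E^{1-m}$, contradicting the minimality of $\alpha$ (this is where the hypothesis that $\alpha$ is minimal, rather than arbitrary of valuation $-m$, is actually used). To complete your proof you need to supply this (or an equivalent) argument; everything before that step is consistent with the paper, with only routine care needed about compact versus full smooth induction and about extracting the multiplicity count from Proposition~\ref{J}.
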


\begin{proof}  By Frobenius reciprocity, $$\Hom_{\GL_2(F)\times B^\times}\left(\pi\otimes\pi',\Ind_{\mathcal{K}^1_{x,m}}^{\GL_2(F)}\tau_\alpha^1\right)=\Hom_{J_A\times J_B}\left(\pi\otimes\pi'\vert_{J_A\times J_B},\Ind_{K_{x,m}^1}^{J_A\times J_B}\tau_\alpha^1\right).$$ By Prop.~\ref{J}, the dimension of this space is the number of characters $\chi\in Y_\alpha$ for which $\Lambda_\chi\otimes\check{\Lambda}_\chi$
 is contained in $\pi\otimes\pi'$.  If there exists one such character $\chi$, then already we have $\pi=\pi_\chi$ and $\pi'=\check{\pi}'_\chi$, since $\pi_\chi=\Ind_{J_A}^{\GL_2(F)}\Lambda_\chi$ and $\check{\pi}_\chi=\Ind_{J_B}^{B^\times}\check{\Lambda}_\chi$ are irreducible.  There is only one other character $\chi'$ for which $\pi=\pi_{\chi'}$, namely the $F$-conjugate character $\chi'=\chi^\sigma$.   We claim that $\chi^\sigma$ does not belong to $Y_\alpha$.  Assume it does:  then we would have  $\chi^\sigma(1+x)=\chi(1+x^\sigma)=\psi_E(\alpha x^\sigma)=\psi_E(\alpha^\sigma x)=\psi_E(\alpha x)$ for all $x\in\gp_E^m$. This implies that $\alpha-\alpha^\sigma\in\gp_E^{1-m}$, which contradicts the fact that $\alpha$ is minimal.
 \end{proof}

 As a direct consequence of Prop.~\ref{pipiprime}, we find that if $\pi$ is a smooth irreducible representation of $\GL_2(F)$, the space $\Hom_{\GL_2(F)}\left(\pi,\Ind_{\mathcal{K}_{x,m}^1}^{\GL_2(F)\times B^\times}\tau_\alpha^1\right)$ equals $\JL(\pi)$ if $\pi=\pi_\chi$ for some $\chi\in Y_\alpha$, and it vanishes otherwise.

\section{Realization of the Jacquet-Langlands correspondence}
\label{JLrealization}

Let $E/F$ be a tame quadratic extension, let $x\in M^E$ and let $m\geq 0$ be an integer.  The goal of this section is to prove the following

\begin{Theorem}  \label{existenceofX} There exists a smooth, projective, geometrically connected curve $\mathfrak{X}_{x,m}$ over $\overline{k}$, together with a $\overline{k}$-linear action of $\mathcal{K}_{x,m}^1$ on $\mathfrak{X}_{x,m}$ with the following properties:
\begin{itemize}
\item[(i)] The action $\mathcal{K}_{x,m}^1\to\Aut\mathfrak{X}_{x,m}$ has kernel exactly $\mathcal{L}_{x,m}^1$.
\item[(ii)] For all smooth irreducible representations $\pi$ of $\GL_2(F)$, we have that $$\Hom_{\GL_2(F)}\left(\pi,\Ind_{\mathcal{K}_{x,m}^1}^{\GL_2(F)\times B^\times}H^1(\mathfrak{X}_{x,m},\QQ_\ell)\right)$$ equals $\JL(\pi)^{\oplus 2}$ if $\pi=\pi_\chi$ for some character $\chi$ of $E^\times$ of essential level $m$, and vanishes otherwise.
%
\end{itemize}
\end{Theorem}

The proof will be done case by case over the next few paragraphs.  We make a few abbreviations which will apply for the remainder of the section.  Since $x\in M^E$ is given, we {\em identify} $E$ with a subfield of the algebra $M_2(F)\times B$ by means of the injection $j_x$.  We fix an integer $m\geq 0$ as well, and we write $\mathfrak{A}$, $\mathcal{K}$, $\mathfrak{X}$, etc. for the objects $\mathfrak{A}_x$, $\mathcal{K}_{x,m}$, $\mathfrak{X}_{x,m}$, etc.

We also introduce the notations $\mathcal{Q}=\mathcal{K}/\mathcal{L}$, $\mathcal{Q}^1=\mathcal{K}^1/\mathcal{L}^1$.

\subsection{Level zero supercuspidals}  In this paragraph, $E/F$ is unramified, $x\in M^E$, and $m=0$.  By replacing $x$ by one of its $\GL_2(F)$-translates we may assume $\mathcal{A}=M_2(\OO_F)$.  Then $\mathcal{K}=F^\times(\GL_2(\OO_F)\times\OO_B^\times)$.

Let $\theta$ be a character of $k_E^\times$ for which $\theta\neq\theta^q$.  Let $\lambda_\theta$ be the cuspidal representation of $\GL_2(k_E)$ corresponding to $\theta$.  Let $\tau_\theta$ be the representation of $\mathcal{K}$ which is trivial on $F^\times$ and for which $\tau_\theta\vert_{\GL_2(\OO_F)\times\OO_B^\times}$ is the inflation of $\lambda_\theta\otimes\check{\theta}$.  Let $\tau_\theta^1$ be the restriction of $\tau_\theta$ to $\mathcal{K}^1$

\begin{lemma} Every irreducible representation of $\mathcal{K}$ lying over the representation $\tau_\theta^1$ of $\mathcal{K}^1$ is of the form $\tau_\theta\otimes(\omega\circ\delta)$ for some character $\omega$ of $F^\times$.
\end{lemma}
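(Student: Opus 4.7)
The plan is to reduce this to a direct application of Clifford theory, exactly parallel to the proof of Lemma~\ref{delta}, after identifying the abelianization $\mathcal{K}/\mathcal{K}^1$.

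First I would analyze $\delta$ restricted to $\mathcal{K} = F^\times(\GL_2(\OO_F)\times\OO_B^\times)$. For a central scalar $z\in F^\times$ embedded diagonally we have $\delta(z,z)=z^{-2}N(z)=z^{-2}\cdot z^{2}=1$, while on $\GL_2(\OO_F)\times\OO_B^\times$ the map $(g,b)\mapsto(\det g)^{-1}N(b)$ is surjective onto $\OO_F^\times$ (since $\det\colon\GL_2(\OO_F)\to\OO_F^\times$ alone is already surjective). Consequently $\delta$ induces an isomorphism $\mathcal{K}/\mathcal{K}^1\xrightarrow{\sim}\OO_F^\times$, and in particular $\mathcal{K}/\mathcal{K}^1$ is abelian.

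Next I would apply Clifford theory to the normal subgroup $\mathcal{K}^1\triangleleft\mathcal{K}$. The isomorphism class of $\tau_\theta^1$ is $\mathcal{K}$-stable for the trivial reason that $\tau_\theta^1$ already extends to the representation $\tau_\theta$ of $\mathcal{K}$. Since the quotient $\mathcal{K}/\mathcal{K}^1$ is abelian, Clifford theory then tells us that the set of isomorphism classes of irreducible representations of $\mathcal{K}$ lying over $\tau_\theta^1$ is a torsor under $\operatorname{Hom}(\mathcal{K}/\mathcal{K}^1,\overline{\Q}_\ell^\times)$: concretely, every such irreducible is of the form $\tau_\theta\otimes\chi$ where $\chi$ is a character of $\mathcal{K}$ that is trivial on $\mathcal{K}^1$.

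To finish, I would lift any such character $\chi$ across $\delta$. Via the isomorphism of the first paragraph, $\chi$ corresponds to a character of $\OO_F^\times$. Choosing a uniformizer gives a splitting $F^\times\cong\varpi_F^{\Z}\times\OO_F^\times$, so every character of $\OO_F^\times$ extends (non‑uniquely) to a character $\omega$ of $F^\times$, and then $\chi=\omega\circ\delta$. Therefore any irreducible representation of $\mathcal{K}$ lying over $\tau_\theta^1$ has the form $\tau_\theta\otimes(\omega\circ\delta)$, as required.

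The only point that requires real care — and the step I would expect to be the main obstacle if one pushed on it — is the invocation of Clifford theory; specifically, one wants to make sure $\tau_\theta^1$ is irreducible and that any irreducible lying over it restricts to a single isotypic copy of $\tau_\theta^1$. Both facts follow in the standard way from the existence of the extension $\tau_\theta$ together with the abelian‑ness of the quotient: given any irreducible $\Lambda$ lying over $\tau_\theta^1$, the space $\operatorname{Hom}_{\mathcal{K}^1}(\tau_\theta^1,\Lambda)$ carries an action of $\mathcal{K}/\mathcal{K}^1$, hence decomposes as a sum of characters, and each character appearing identifies $\Lambda$ with the corresponding twist $\tau_\theta\otimes(\omega\circ\delta)$.
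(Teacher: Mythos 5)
Your argument is correct and is essentially the paper's (unwritten) argument for this lemma, matching the proof given for the parallel Lemma~\ref{delta}: identify $\delta(\mathcal{K})=\OO_F^\times$ with $\mathcal{K}/\mathcal{K}^1$, invoke the Clifford--Gallagher twisting principle using the extension $\tau_\theta$ of $\tau_\theta^1$ and the abelian quotient, and lift the resulting character of $\OO_F^\times$ to a character $\omega$ of $F^\times$ so that it becomes $\omega\circ\delta$. One small caveat: irreducibility of $\tau_\theta^1$ does not follow merely from the existence of the extension plus abelianness of the quotient (the central extension $Q_8$ over its center is a counterexample to that general principle), but this does not matter, since the eigenvector argument you sketch -- $\mathcal{K}/\mathcal{K}^1$ acts on $\Hom_{\mathcal{K}^1}(\tau_\theta^1,\Lambda)$, and any eigencharacter $\chi$ yields a nonzero $\mathcal{K}$-map $\tau_\theta\otimes\chi\to\Lambda$, an isomorphism because $\tau_\theta\otimes\chi$ and $\Lambda$ are irreducible -- needs only the irreducibility of $\tau_\theta$ itself.
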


\begin{prop} \label{tautheta} Let $\pi$ be a smooth irreducible representation of $\GL_2(F)$.  Then $$\Hom_{\GL_2(F)}\left(\pi,\Ind_{\mathcal{K}^1}^{\GL_2(F)\times B^\times}\tau_\theta^1\right)$$ equals $\JL(\pi)$ if $\pi=\pi_\chi$ for some character $\chi\in X_\theta$, and equals 0 otherwise.
\end{prop}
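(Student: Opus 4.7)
The plan is to follow closely the strategy of Propositions \ref{J} and \ref{pipiprime}, specialized to the level-zero unramified setting. First I would, for each smooth irreducible representation $\pi'$ of $B^\times$, apply Frobenius reciprocity in stages through $J_A\times J_B$:
$$\Hom_{\GL_2(F)\times B^\times}\!\left(\pi\otimes\pi',\Ind_{\mathcal{K}^1}^{\GL_2(F)\times B^\times}\tau_\theta^1\right)=\Hom_{J_A\times J_B}\!\left(\pi\otimes\pi'\vert_{J_A\times J_B},\Ind_{\mathcal{K}^1}^{J_A\times J_B}\tau_\theta^1\right).$$
This reduces the computation of $\Hom_{\GL_2(F)}(\pi,\Ind\tau_\theta^1)$ as a $B^\times$-representation to identifying which irreducible $J_A\times J_B$-representations lie over $\tau_\theta^1$.

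The core of the argument is the level-zero analog of Proposition \ref{J}: the irreducible representations of $J_A\times J_B$ lying over $\tau_\theta^1$ are precisely the tensor products $\Lambda_\chi\otimes\check{\Lambda}'_\chi$ as $\chi$ ranges over $X_\theta$. The preceding lemma (the analog of Lemma \ref{delta}) lets one replace ``lying over $\tau_\theta^1$'' by ``lying over $\tau_\theta$ after twisting by some $\omega\circ\delta$''. Writing such a representation as $\Lambda_1\otimes\Lambda_2$, the restriction $\Lambda_1\vert_{\GL_2(\OO_F)}$ must lie over the irreducible $\lambda_\theta$ and hence equal $\lambda_\theta$, so $\Lambda_1$ is determined by its restriction to $F^\times$; likewise $\Lambda_2\vert_{\OO_B^\times}$ is the inflation of $\check{\theta}$. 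Matching $\Lambda_1$ and $\Lambda_2$ on the diagonally embedded $E^\times$ is then pinned down by the trace normalization $\tr\tau_\theta(\zeta)=-1$ on roots of unity $\zeta\in\mu_E\setminus\mu_F$, forcing $\Lambda_1=\Lambda_\chi$ and $\Lambda_2=\check{\Lambda}'_\chi$ for a common character $\chi$ of $E^\times$ whose restriction to $\OO_E^\times$ inflates $\theta$---that is, $\chi\in X_\theta$.

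The remainder is a counting argument. Since $\pi_\chi=\Ind_{J_A}^{\GL_2(F)}\Lambda_\chi$ and $\pi'_\chi=\Ind_{J_B}^{B^\times}\Lambda'_\chi$ are irreducible, the multiplicity of $\pi\otimes\pi'$ in $\Ind_{\mathcal{K}^1}^{\GL_2(F)\times B^\times}\tau_\theta^1$ equals the number of $\chi\in X_\theta$ with $\pi\cong\pi_\chi$ and $\pi'\cong\check{\pi}'_\chi$. The only other admissible character yielding the same $\pi_\chi$ is the Galois conjugate $\chi^\sigma$, whose restriction to $\OO_E^\times$ inflates $\theta^q$; since $\theta\neq\theta^q$ by assumption, the sets $X_\theta$ and $X_{\theta^q}$ are disjoint, so $\chi^\sigma\notin X_\theta$ and at most one such $\chi$ contributes. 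Assembling these multiplicities over all $\pi'$ identifies $\Hom_{\GL_2(F)}(\pi,\Ind\tau_\theta^1)$ with $\JL(\pi)$ when $\pi=\pi_\chi$ for some $\chi\in X_\theta$, and with zero otherwise.

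I expect the main obstacle to be step two---pinning down precisely how $\tau_\theta$ on the $E^\times$-part constrains the extensions of $\lambda_\theta\otimes\check{\theta}$ from $\GL_2(\OO_F)\times\OO_B^\times$ up to $J_A\times J_B$. The characterization of $\tau_\theta$ via its trace on $\mu_E\setminus\mu_F$ is what does the work here, echoing the sign $(-1)^{v_E(\beta)}$ in the definition of $\Lambda'_\chi$ in \S\ref{admissiblepairs}, and carrying out this matching in detail is the one nontrivial computation.
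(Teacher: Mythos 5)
Your overall strategy is the intended one: the paper gives no argument for Prop.~\ref{tautheta}, and the expected proof is exactly a level-zero transposition of Lemma~\ref{delta}, Prop.~\ref{J} and Prop.~\ref{pipiprime}, which is what you outline. The genuine problem is your final disjointness step. You assert that $\theta\neq\theta^q$ forces $X_\theta$ and $X_{\theta^q}$ to be disjoint, hence $\chi^\sigma\notin X_\theta$. But the paper's own criterion in \S\ref{admissiblepairs} is different: $X_\theta=X_{\theta'}$ whenever $\theta'\theta^{-1}$ factors through $N_{k_E/k}$, and disjointness holds only otherwise. Now $\theta^q\theta^{-1}$ factors through $N_{k_E/k}$ exactly when it is the unique quadratic character of $k_E^\times$, and for every odd $q$ there are regular $\theta$ with this property (e.g.\ $q=3$ and $\theta$ of order $4$). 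For such $\theta$ one has $\chi^\sigma=\chi\cdot(\omega\circ N_{E/F})$ with $\omega$ a ramified quadratic character of $F^\times$, so $\chi^\sigma\in X_\theta$ and your counting argument collapses at exactly the step you treated as routine.

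Moreover this is not a gap one can close by a cleverer argument: in that exceptional case the multiplicity really is $2$. Indeed $\Lambda_\chi\otimes\check{\Lambda}'_\chi$ and $\Lambda_{\chi^\sigma}\otimes\check{\Lambda}'_{\chi^\sigma}$ are non-isomorphic constituents of $(\pi_\chi\otimes\check{\pi}'_\chi)\vert_{J_A\times J_B}$, each occurring there, and both lie over $\tau_\theta^1$: writing $\theta^q\theta^{-1}=\eta\circ N_{k_E/k}$ with $\eta$ quadratic on $k^\times$, the two restrictions to $\mathcal{K}^1$ differ by the character $(g,\beta)\mapsto\eta(\det g)\eta(N_{k_E/k}\beta)^{-1}$, which is trivial on $\mathcal{Q}^1$ because $\det g=N_{k_E/k}\beta$ there; equivalently $\tau_{\theta^q}^1\cong\tau_\theta^1$. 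Hence for these $\theta$ (the level-zero supercuspidals admitting self-twists by the ramified quadratic characters) the Hom space is $\JL(\pi)^{\oplus 2}$, so the proposition as stated needs this case excluded or restated, and your proof should treat it separately. Note that Theorem~\ref{existenceofX}(ii) is unaffected: for such $\theta$ the classes of $\theta$ and $\theta^q$ coincide, so $\tau_\theta^1$ occurs only once in $H^1$ of the Deligne--Lusztig curve, and the total multiplicity is still $2$, whereas for generic $\theta$ one gets two distinct summands $\tau_\theta^1$ and $\tau_{\theta^q}^1$ each contributing once.
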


Now let $\mathfrak{X}$ be the Deligne-Lusztig curve for the group $\SL_2(k)$.  This is the smooth projective curve with affine equation $X^qY-XY^q=1$.  The group $\mathcal{Q}^1$ is the set of pairs $(g,b)$ with $g\in \GL_2(k)$ and $\beta\in k_E^\times$ satisfying $\det g=N_{k_2/k}\beta$.  We define a (right) action of $\mathcal{Q}^1$ on $\mathfrak{X}$ via the rule
$$(u,v)^{(g,\beta)}=(\beta^{-1}(au+cv),\beta^{-1}(bu+dv))$$ when $g=\tbt{a}{b}{c}{d}$.

\begin{prop} As a module for the action of $\mathcal{Q}^1$, we have $$H^1(\mathfrak{X},\QQ_\ell)=\bigoplus_{\theta\in S}\lambda_\theta\otimes\check{\theta},$$ where $S$ is a set of representatives for the equivalence classes of characters $\theta$ of $k_E^\times$ not factoring through $N_{k_E/k}$ modulo the relation $\theta\sim \theta'$ if $\theta'\theta^{-1}$ factors through $N_{k_E/k}$.
\end{prop}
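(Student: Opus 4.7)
The plan is to recognize $\mathfrak{X}: X^qY - XY^q = 1$ as the classical Drinfeld--Deligne--Lusztig curve attached to the non-split torus $k_E^\times \subset \GL_2(k)$, and to match its $\mathcal{Q}^1$-decomposition with the standard one.  First I would verify by direct substitution that the formula $(u,v)^{(g,\beta)} = (\beta^{-1}(au+cv),\beta^{-1}(bu+dv))$ preserves the defining equation: since $a, b, c, d \in k$ are Frobenius-fixed, one computes
$$X^qY - XY^q = \beta^{-(q+1)} \det(g) \cdot (u^q v - uv^q) = N_{k_E/k}(\beta)^{-1}\det(g) \cdot (u^q v - uv^q),$$
which equals $u^q v - uv^q$ precisely when $\det g = N_{k_E/k}(\beta)$, i.e., exactly on $\mathcal{Q}^1$.

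The projective completion of $\mathfrak{X}$ is smooth of genus $q(q-1)/2$, so $\dim H^1 = q(q-1)$, agreeing with the dimension count on the right-hand side ($q$ classes in $S$, each contributing a cuspidal of dimension $q-1$ tensored with a character).  I would first restrict attention to the subgroup $\SL_2(k) \times \mu_{q+1} \subset \mathcal{Q}^1$, where $\mu_{q+1} = \ker N_{k_E/k}$.  The classical Drinfeld decomposition gives
$$H^1(\mathfrak{X}, \QQ_\ell) \cong \bigoplus_{\psi \ne 1} \sigma_\psi \otimes \check\psi$$
as $\SL_2(k) \times \mu_{q+1}$-modules, summed over nontrivial characters $\psi$ of $\mu_{q+1}$, with $\sigma_\psi$ the corresponding cuspidal of $\SL_2(k)$.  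To promote this to a $\mathcal{Q}^1$-decomposition, I use that $\mathcal{Q}^1 / (\SL_2(k) \times \mu_{q+1}) \cong k^\times$ via $(g,\beta) \mapsto \det g = N\beta$.  The extensions of each isotypic component $\sigma_\psi \otimes \check\psi$ to $\mathcal{Q}^1$ are indexed by characters $\theta$ of $k_E^\times$ restricting to $\psi$ on $\mu_{q+1}$; two such $\theta, \theta'$ give the same summand exactly when $\theta/\theta'$ factors through $N_{k_E/k}$, and $\psi \ne 1$ becomes the condition that $\theta$ does not factor through $N_{k_E/k}$.  This matches the indexing set $S$ of the statement.

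The delicate step is verifying that the extension realizes the correct cuspidal and twist: namely, that the extension of $\sigma_\psi$ to $\GL_2(k)$ occurring in $H^1$ is the cuspidal $\lambda_\theta$ (with central character $\theta|_{k^\times}$), and that the $k_E^\times$-twist is $\check\theta$ rather than $\theta$.  I would handle this by the Lefschetz trace formula: for a regular semisimple $(g, \beta) \in \mathcal{Q}^1$, count the fixed points directly on $\mathfrak{X}$, then compare with the character $\sum_{\theta \in S} \tr\lambda_\theta(g)\,\check\theta(\beta)$ using the known character table of cuspidals of $\GL_2(k)$.  Alternatively, the central-character condition alone pins down the extension from $\SL_2(k)$ to $\GL_2(k)$ as $\lambda_\theta$, while the sign on the twist can be fixed by testing a single scalar element $(c\cdot I, \beta)$ with $c^2 = N\beta$.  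This bookkeeping is routine once the classical decomposition is in hand.
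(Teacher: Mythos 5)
Your main line of argument is essentially the paper's own: the paper disposes of this proposition in one sentence by invoking Deligne--Lusztig theory or a Lefschetz fixed-point computation with the explicit equation, and your plan (check the $\mathcal{Q}^1$-action preserves the equation, use the classical Drinfeld decomposition over $\SL_2(k)\times\mu_{q+1}$, then pin down the $\mathcal{Q}^1$-structure by comparing Lefschetz traces on regular semisimple elements with the known cuspidal character values) is a correct fleshing-out of exactly that.

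One caution: your proposed shortcut in the last step does not work. Since $q$ is odd, the quadratic character $\kappa$ of $k^\times$ gives a nontrivial character $\kappa\circ\det$ of $\mathcal{Q}^1$ which is trivial both on $\SL_2(k)\times\mu_{q+1}$ and on the center $\{(cI,\beta):c^2=N_{k_E/k}\beta\}$ (because there it takes the value $\kappa(c^2)=1$); consequently $\lambda_\theta\otimes\check{\theta}$ and its twist by $\kappa\circ\det$ (which is $\lambda_{\theta\cdot(\kappa\circ N)}\otimes\check{\theta}$, generically not of the form $\lambda_{\theta'}\otimes\check{\theta'}$) have the same restriction to $\SL_2(k)\times\mu_{q+1}$ and the same central character, yet are non-isomorphic $\mathcal{Q}^1$-modules. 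So central characters and scalar elements alone cannot resolve the remaining twist; you must actually carry out the trace comparison on non-central elements (e.g.\ $(g,\beta)$ with $g$ elliptic of eigenvalues $\beta,\beta^q$), which your primary method already provides. A minor further point: when $\theta$ restricts to the quadratic character of $\mu_{q+1}$, the piece $\sigma_\psi$ is reducible over $\SL_2(k)$; this does not harm the argument, since you should work throughout with the $\psi$-isotypic component of $H^1$ as a $\mathcal{Q}^1$-module rather than with an irreducible $\SL_2(k)$-constituent.
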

\begin{proof} This is an application of Deligne-Lusztig theory, or else an easy exercise using the Lefshetz fixed-point formula with the explicit equation for $\mathfrak{X}$.
\end{proof}

\subsection{Case of $E/F$ tamely ramified, $m$ odd}  Fix a uniformizer $\varpi_E$ for $E$.

Define a map $\rho\from \mathcal{K}^1\to \mathbf{F}_q\times \Z/2\Z$ by $j_x(\beta)\mapsto (0,v_E(\beta)\mod 2)$ and $1+t\mapsto (\varpi_E^{-m}s(t)\mod \gp_E,0)$ for $t\in V^m$.  Then $\rho$ descends to an isomorphism $\mathcal{Q}^1\to \mathbf{F}_q\times \Z/2\Z$.  Let $\mathfrak{X}$ be the smooth projective curve with affine equation $$X^q-X=Y^2,$$ and have $\mathbf{F}_q\times\Z/2\Z$ act on this curve in the following manner:  An element $a\in\mathbf{F}_q$ acts via $(X,Y)\mapsto (X+a,Y)$, and the nontrivial element of $\Z/2\Z$ acts via $(X,Y)\mapsto (X,-Y)$.

By Prop.~\ref{hyperellipticdecomp}, we have an isomorphism of $\mathcal{K}^1$ modules $$H^1(\mathfrak{X},\QQ_\ell)\isom \bigoplus_{\alpha\in S} \tau_\alpha^1,$$ where $S$ is a set of representatives for the nontrivial elements in $\gp_E^{-m}/\gp_E^{1-m}$.  Every character $\chi$ of essential level $m$ belongs to exactly one $Y_\alpha$ for some unique $\alpha\in S$.  Since $\pi_\chi=\pi_{\chi'}$ if and only if $\chi'=\chi$ or $\chi'=\chi^\sigma$, we find that $$\Hom_{\GL_2(F)}\left(\pi,\Ind_{\mathcal{K}^1}^{\GL_2(F)\times B^\times} H^1(\mathfrak{X},\QQ_\ell)\right)$$ equals $\JL(\pi)^{\oplus 2}$ if $\pi=\pi_\chi$ for some $\chi$ of essential level $m$, and vanishes otherwise.

\subsection{Case of level $m>0$, $E/F$ unramified} Here $\mathcal{Q}$ is isomorphic to the subgroup of $\PGL_3(k_E)$ consisting of matrices of the form $$\left(\begin{matrix} \alpha & \beta & \gamma \\ & \alpha^q & \beta^q \\ & & \alpha \end{matrix}\right)$$.  For an explicit isomorphism, see~\cite{Weinstein}.    The subgroup $\mathcal{Q}^1$ consists of those matrices as above which satisfy
\begin{equation}
\label{unitary}
\alpha\gamma^q+\alpha^q\gamma=\beta^{q+1}.
\end{equation}
In fact $\mathcal{Q}$ is a Borel subgroup of a unitary group in three variables associated to the quadratic extension $k_2/k$.  The image of $\mathcal{P}\subset \mathcal{K}$ in $\mathcal{Q}$ is the central subgroup
$$\set{\left(\begin{matrix} 1 & 0 & \alpha \\ & 1 & 0 \\ & & 1\end{matrix}\right)}\isom k_E.$$

Considered as a subgroup of $\PGL_3(\overline{k})=\Aut \pr^2_{\overline{k}}$, the group $\mathcal{Q}^1$ preserves the curve with projective equation $$XZ^q+X^qZ=Y^{q+1},$$ which we take for our curve $\mathfrak{X}$.

A minimal element $\alpha\in E^\times$ of valuation $-n$ gives rise to a character $\psi_\alpha$ of $\mathcal{P}$ which factors through a character of $k_E$;  the condition that $\alpha$ is minimal implies that $\psi_\alpha$ does not factor through $\tr_{k_E/k}$.   Let $\tau_\alpha$ be the representation of $\mathcal{K}$ which lies over $\psi_\alpha$ and satisfies $\tr\tau_\alpha(\zeta)=-1$ for each root of unity $\zeta\in E^\times\backslash F^\times$, as in Lemma~\ref{existenceoftau}, and let $\tau_\alpha^1$ be the restriction of $\tau_\alpha$ to $\mathcal{K}$.

Let $S$ be a set of representatives for the minimal elements of $\gp_E^{-m}/(\gp_F^{-m}+\gp_E^{1-m})$.  Then every character of $E^\times$ of essential level $m$ belongs to exactly one $Y_\alpha$ for some unique $\alpha\in S$.  Then by Prop.~\ref{hermitiandecomp} we have an isomorphism of $\mathcal{K}^1$-modules
$$H^1(\mathfrak{X},\QQ_\ell)=\bigoplus_{\alpha} \tau_\alpha^1.$$
We find that $\mathfrak{X}$ satisfies the hypotheses of Theorem~\ref{existenceofX} by proceeding as in the previous paragraph.

\subsection{Case of level $m=0$, $E/F$ ramified} \label{ramifiedlevelzero} There are no minimal elements of $E^\times$ of valuation zero, so in order to satisfy the demands of the theorem we must take $\mathfrak{X}=\pr^1$ to be the rational curve over $\overline{k}$.  By replacing $x$ by a $\GL_2(F)$-translate we may assume $\mathfrak{A}=\tbt{\OO_F}{\OO_F}{\gp_F}{\OO_F}$.  Then $\mathfrak{A}/\gP_\mathfrak{A}\isom k\times k$.

Let $k_2\subset \overline{k}$ be the quadratic extension of $k$.  We choose a $k$-isomorphism $k_2\isom k_B$.

\begin{lemma} The group $\mathcal{Q}^1$ is isomorphic to the semidirect product $k_2^\times\rtimes (\Z/2\Z)$, where the nontrivial element of $(\Z/2\Z)$ acts as $\beta\mapsto \beta^{-1}$ on $k_2^\times$.
\end{lemma}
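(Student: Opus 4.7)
The plan is to compute $\mathcal{Q}^1=\mathcal{K}^1/\mathcal{L}^1$ directly. First I would unwind the filtrations in the ramified $m=0$ case. Since the residual degree is $f=1$ for $E/F$ ramified, the formulas $s_A(\gP_\mathfrak{A})=\gp_E$ and $s_B(\gP_B)=\gp_E$ give $V_A^1=\gP_\mathfrak{A}$ and $V_B^1=\gP_B$, so
$$\mathcal{K} = E^\times(\mathfrak{A}^\times\times\OO_B^\times), \qquad \mathcal{L} = F^\times U_E^1\cdot\bigl((1+\gP_\mathfrak{A})\times(1+\gP_B)\bigr).$$
Since $\varpi_E^2\in F^\times\cdot\OO_F^\times$ reduces to an element of $\mathcal{L}$, writing $\mathcal{K}_0=\mathfrak{A}^\times\times\OO_B^\times$ yields a short exact sequence
$$1 \longrightarrow \mathcal{K}_0^1/\mathcal{L}_0^1 \longrightarrow \mathcal{Q}^1 \longrightarrow \Z/2\Z \longrightarrow 1,$$
with the quotient generated by the class of $j_x(\varpi_E)$.

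The main step is to identify $\mathcal{K}_0^1/\mathcal{L}_0^1$ with $k_2^\times$. The reduction $\mathcal{K}_0\twoheadrightarrow(k^\times\times k^\times)\times k_2^\times$ has kernel $(1+\gP_\mathfrak{A})\times(1+\gP_B)$, and under it the defining condition $\det(g)=N(b)$ of $\mathcal{K}_0^1$ becomes $\alpha\beta=N_{k_2/k}(\gamma)$, where $(\alpha,\beta)$ is the image of $g$ and $\gamma$ that of $b$. A direct check gives $\mathcal{L}_0:=\mathcal{L}\cap\mathcal{K}_0=j_x(\OO_E^\times)\cdot\bigl((1+\gP_\mathfrak{A})\times(1+\gP_B)\bigr)$, using that $\OO_E^\times=\OO_F^\times\cdot U_E^1$ when $E/F$ is ramified. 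Since $\OO_E$ reduces to the diagonal of $\mathfrak{A}/\gP_\mathfrak{A}\cong k\times k$ and to the prime subfield $k\hookrightarrow k_2$ of $\OO_B/\gP_B$, the image of $\OO_E^\times$ in the reduction is the ``triple diagonal'' $\{(t,t,t):t\in k^\times\}$. The map $(\alpha,\beta,\gamma)\mapsto \alpha^{-1}\gamma\in k_2^\times$ is invariant under this diagonal and induces the isomorphism $\mathcal{K}_0^1/\mathcal{L}_0^1\cong k_2^\times$.

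To determine the extension, fix $j_{x,A}(\varpi_E)=\tbt{0}{1}{\varpi_F}{0}$; conjugation by this element swaps the two factors of the Iwahori residue quotient $k^\times\times k^\times$. The element $j_{x,B}(\varpi_E)$ is a uniformizer of $B$, so conjugation by it induces the Frobenius $\gamma\mapsto\gamma^q$ on $k_2^\times$. Using $\beta=N_{k_2/k}(\gamma)/\alpha$, the invariant $\alpha^{-1}\gamma$ is therefore sent to $\beta^{-1}\gamma^q=\alpha\gamma^q/(\gamma\gamma^q)=\alpha/\gamma=(\alpha^{-1}\gamma)^{-1}$, so $j_x(\varpi_E)$ acts by inversion. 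Finally, $j_x(\varpi_E)^2\in j_x(F^\times\cdot\OO_F^\times)\subset\mathcal{L}$, so the extension splits and $\mathcal{Q}^1\cong k_2^\times\rtimes\Z/2\Z$ with the nontrivial factor acting as $\beta\mapsto\beta^{-1}$.

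The main bookkeeping obstacle is pinning down the triple-diagonal image of $\OO_E^\times$ and correctly identifying the quotient with $k_2^\times$ via $\alpha^{-1}\gamma$; once those ingredients are in place, the inversion action and the splitting of the extension follow from short explicit calculations with the ramified quaternion algebra.
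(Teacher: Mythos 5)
Your proof is correct and follows essentially the same route as the paper: the paper's map $\rho$, sending $(g,b)$ with $\det g=N(b)$ to $a^{-1}\beta\bmod\gP_B$ (your invariant $\alpha^{-1}\gamma$) and sending $E^\times$ to $\Z/2\Z$ via $v_E\bmod 2$, is exactly the isomorphism you construct, with your extension/conjugation computation simply spelling out why $\rho$ descends to an isomorphism and why $j_x(\varpi_E)$ acts by inversion.
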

\begin{proof}  We have
$$
\mathcal{K}=E^\times\left(\tbt{\OO_F^\times}{\OO_F}{\gp_F}{\OO_F}\times \OO_B^\times\right).$$
Define a homomorphism $\rho\from\mathcal{K}^1\to k_2\rtimes (\Z/2\Z)$ by
\begin{eqnalign*}
\rho(\alpha)&=(0,v_E(\alpha)\text{ mod } 2)\\
\rho\left(g,\beta\right)&=(a^{-1}\beta\text{ mod }\gP_B,0)
\end{eqnalign*}
for every $\alpha\in E^\times$ and every $(g,b)\in\mathfrak{A}^\times\times\OO_B^\times$ satisfying $\det g=N_{B/F}b$;  here $a$ denotes the upper left entry of $g$.  Then $\rho$ descends to an isomorphism $\mathcal{Q}^1\isom k_2\rtimes \Z/2\Z$.
\end{proof}

We give a faithful action of $k_2^\times \rtimes\Z/2\Z$ on $\pr^1$:    The group $k_2^\times$ acts by multiplication ($X\mapsto \beta X$) and $\rho(\varpi_E)$ acts by inversion ($X\mapsto 1/X$).

\subsection{Case of level $m>0$ even, $E/F$ ramified} \label{ramifiedevenlevel}
Once more, there are no minimal elements of even valuation, so we take $\mathfrak{X}=\pr^1$.

\begin{lemma} The group $\mathcal{Q}^1$ is (noncanonically) isomorphic to $k_2\rtimes (\Z/2\Z)$, where the nontrivial element of $(\Z/2\Z)$ acts as $\beta\mapsto -\beta$ on $k_2^\times$.
\end{lemma}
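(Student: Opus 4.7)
The plan is to unpack the split exact sequence of Prop.~\ref{Kxm}(iii) and compute the conjugation action of a uniformizer $\varpi_E$ of $E$ on $\Gr_m^1$.

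By Prop.~\ref{Kxm}(iii), $\mathcal{Q}^1$ fits into a split exact sequence
$$1 \to \Gr_m^1 \to \mathcal{Q}^1 \to E^\times/F^\times U_E^1 \to 1.$$
Since $E/F$ is ramified, $E^\times/F^\times U_E^1$ is cyclic of order $2$, generated by the class of $\varpi_E$. By Lemma~\ref{GR1}(ii), $\Gr_m^1$ is a $2$-dimensional $k$-vector space, hence noncanonically isomorphic as an abelian group to $(k_2,+)$; I would fix any such $k$-linear isomorphism $\iota$, which accounts for the ``noncanonical'' choice in the statement.

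The key step is to show that $\varpi_E$ acts on $\Gr_m^1$ by conjugation as multiplication by $-1$. Using the decompositions $A = E \oplus C_A$ and $B = E \oplus C_B$ under the respective reduced-trace pairings, and the identity $\varpi_E^\sigma = -\varpi_E$ (valid for tame ramified $E/F$ since $\varpi_E^2 = \varpi_F$ up to a unit), the characterizing relation $c\,x = x^\sigma c$ for $c$ in either complement and $x \in E$ implies by direct computation that conjugation by $\varpi_E$ fixes $E$ pointwise and acts as $-1$ on each of $C_A, C_B$. Propagating to the filtration pieces, the induced $\varpi_E$-action on $\Gr_m = (1+V^m)/(1+V^{m+1})$ is trivial on the $E$-piece $W^m/V^{m+1}$ and is $-1$ on the $C$-piece $V^m/W^m$. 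Passing to $\mathcal{Q}^1 = \mathcal{K}^1/\mathcal{L}^1$, the diagonal $U_E^m \subset \mathcal{L}^1$ collapses and the condition $\delta = 1$ eliminates the residual fixed directions, so the surviving $2$-dimensional $\Gr_m^1$ lies in the $(-1)$-eigenspace of $\varpi_E$-conjugation. Transporting through $\iota$ yields $\mathcal{Q}^1 \isom k_2 \rtimes (\Z/2\Z)$ with action $\beta \mapsto -\beta$.

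The main obstacle is this last identification: one must carefully verify that the combined effect of imposing $\delta = 1$ and quotienting by $U_E^m$ removes every $(+1)$-eigenvector from the ambient $\Gr_m$, so that no fixed contribution survives in the residual $\Gr_m^1$. Concretely this amounts to showing that the only $\delta$-trivial $\varpi_E$-fixed elements of $\Gr_m$ come from the $E$-diagonal $U_E^m$, and hence die in the quotient.
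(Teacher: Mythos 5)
Your argument is correct and is essentially the paper's: both use the split exact sequence of Prop.~\ref{Kxm}(iii) to write $\mathcal{Q}^1$ as a two-dimensional $k$-space extended by the order-two group $E^\times/F^\times U_E^1$, identify that space with the trace-complement piece $V^m/W^m$ coming from the decompositions $A=E\oplus C_A$, $B=E\oplus C_B$, and observe that conjugation by $\varpi_E$ acts there as $-1$. Your extra discussion of why the $E$-part $W^m/V^{m+1}$ disappears in $\mathcal{Q}^1$ (the diagonal dying in $U_E^1\subset\mathcal{L}^1$ and the rest being detected by $\delta$) only makes explicit what the paper leaves implicit in its citation of Lemmas~\ref{GR1} and~\ref{Kxm}.
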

\begin{proof}
By Lemma~\ref{GR1}[(iii)] and Lemma~\ref{Kxm}[(iii)], $\mathcal{Q}^1$ is the semidirect product of the 2-dimensional $k$-vector space $V^m/W^m$ by the group $E^\times/F^\times U_E^1$, which has order 2.   Write $\mathcal{A}=\OO_E\oplus\mathfrak{C}_A$, where $\mathfrak{C}$ is the orthogonal complement to $\OO_E$ under the trace map.  Similarly write $\OO_B=\OO_E\oplus\mathfrak{C}_B$.  Let $\mathfrak{C}=\mathfrak{C}_A\times\mathfrak{C}_B$.  We have $V^m/W^m\isom \gp_E^{m/2}\mathfrak{C}/\gp_E^{m/2+1}\mathfrak{C}$.  The lemma follows once we observe that conjugation by a uniformizer $\pi_E\in E$ acts as negation on $\gp_E^{m/2}\mathfrak{C}/\gp_E^{m/2+1}\mathfrak{C}$.  
\end{proof}

We give a faithful action of $k_2\rtimes \Z/2\Z$ on $\pr^1$: The group $k_2$ acts by translation ($X\mapsto X+\beta$) and $\rho(\varpi_E)$ acts by reflection about the origin ($X\mapsto -X$).

\section{Realization of the local Langlands Correspondence}
\label{LLCrealization}

Let $(E/F,\chi)$ be an admissible pair, so that $\Ind_{E/F}\chi$ is an irreducible representation of $W_F$.  It is not the case that $\Ind_{E/F}\chi\mapsto\pi_\chi$ is the local Langlands correspondence.  For instance, the central character of $\pi_\chi$ is $\chi\vert_F$, while the central character of $\Ind_{E/F}\chi$ is $\chi\vert_F\cdot\kappa_{E/F}$, where $\kappa_{E/F}\from F^\times\to\set{\pm 1}$ is the character which cuts out the quadratic extension $E/F$ by local class field theory.

To remedy this situation, this ``na\"ive" correspondence must be adjusted by replacing $\chi$ with the product $\chi\Delta_\chi$, where $\Delta_\chi$ is an appropriately chosen tamely ramified character of $E^\times$.  We describe a character $\Delta_\chi$ so that $\Ind_{E/F}\chi\Delta_\chi\mapsto\pi_\chi$ is the $\ell$-adic local Langlands correspondence.  If $E/F$ is unramified, then $\Delta_\chi$ is the unramified character with $\Delta_\chi(\varpi_E)=-q$.

If $E/F$ is ramified, the definition of $\Delta_\chi$ is more involved.  Let $\psi$ be a $\QQ_\ell$-valued character of $F$ which vanishes on $\gp_F$ but not on $\OO_E$.  Define the Gauss sum
$$\tau(\kappa_{E/F},\psi)=\sum_{a\in \OO_F/\gp_F}\kappa_{E/F}(a)\psi(a),$$
so that $\tau(\kappa_{E/F},\psi)^2=\kappa_{E/F}(-1)q$.   Suppose $\alpha\in\gp_E^{-m}$ is a minimal element for which $\chi\in Y_m$.  Then $\Delta_\chi$ is the character satisfying
\begin{itemize}
\item[(i)] $\Delta_\chi$ vanishes on $U_E^1$.
\item[(ii)] For $u \in U_E^1$, we have $\Delta_\chi(\alpha)=\left(\frac{u\mod \gp_E}{q}\right)$.
\item[(iii)] $\Delta(\varpi_E)=\kappa_{E/F}(\zeta)\tau(\kappa_{E/F},\psi)^mq^{(1-m)/2}$, where $\zeta\in \OO_F^\times$ satisfies $\alpha\varpi_E^m\equiv \zeta\pmod{\gp_E}$.
\end{itemize}


\begin{Theorem} \label{elladic} $\Ind_{E/F}\chi\Delta_\chi\mapsto\pi_\chi$ is the $\ell$-adic local Langlands correspondence.
\end{Theorem}

This is essentially Theorem 34.4 in~\cite{Henniart:Bushnell}, adapted to the situation of the $\ell$-adic local Langlands correspondence.  In terms of characters with complex coefficients, this means replacing the $\Delta_\chi$ of ~\cite{Henniart:Bushnell} with the character $x\mapsto \Delta_\chi(x)\abs{x}_E^{f/2}$, where $f$ is the residual degree of $E/F$.



\subsection{Semilinear Actions.}  Let $x\in M^E$ and let $m\geq 0$.  Theorem~\ref{existenceofX} gives a subgroup $\mathcal{K}^1\subset\GL_2(F)\times B^\times$ which acts linearly on a curve $\mathfrak{X}$ through a quotient $\mathcal{Q}^1$ in such a way that the the cohomology of the fiber product $\mathfrak{X}\times_{\mathcal{K}^1} (\GL_2(F)\times B^\times)$ realizes the Jacquet-Langlands correspondence for supercuspidal representations of the form $\pi_\chi$, where $\chi$ is an admissible character of $E^\times$ of essential level $m$.  We make the abbreviation $$\Ind\mathfrak{X}=\mathfrak{X}\times_{\mathcal{K}^1}(\GL_2(F)\times B^\times).$$

We wish to define a {\em semilinear} action of $W_F$ on $\Ind\mathfrak{X}$ in such a way that the cohomology of the fiber product realizes the $\ell$-adic local Langlands correspondence as well.

Let $\mathcal{N}=\mathcal{N}_x\subset \GL_2(F)\times B^\times$ be the group from~\ref{relativeweil}.   Observe that $\mathcal{N}$ normalizes $\mathcal{K}^1$.  We will be interested in actions $$\Delta\from \mathcal{N}\to \Aut\mathfrak{X}$$ which satisfy the requirements
\begin{itemize}
\item[(i)] $\Delta$ is semilinear with respect to the homomorphism $\mathcal{N}\to\Gal(\overline{k}/k)$, $t\mapsto \Fr_q^{v_F(\delta(t))}$ (in the sense of Defn.~\ref{semilineardefn}).
\item[(ii)] $\Delta$ extends to a well-defined action of $\mathcal{K}^1\rtimes \mathcal{N}$ which agrees with the action of $\mathcal{K}^1$ from Theorem~\ref{existenceofX}.
\item[(iii)] For $\beta\in W_E^{\text{ab}}=E^\times$, we have that the action of $j_x(\beta)\in \mathcal{K}^1$ on $\mathfrak{X}$ agrees with $\Delta(j_{x,W}(\beta))$.
\end{itemize}

Explicitly, the second condition means that the equation
\begin{equation}
\label{semidirect}
\Delta(n)^{-1}k\Delta(n)=n^{-1}kn
\end{equation}
holds in $\Aut \mathfrak{X}$
for all $n\in \mathcal{N}$, $k\in \mathcal{K}^1$.
Given such an action, we may define an action of $W_F$ on $\Ind\mathfrak{X}$ as follows.  A point of $\Ind\mathfrak{X}$ may be represented by a pair $(P,t)$, where $P\in\mathfrak{X}$ and $t\in\GL_2(F)\times B^\times$.  Let $w\in W_{E/F}$, and let $\tilde{w}\in\mathcal{N}$ be a lift of $j_{x,W}(w)\in j_x(E^\times)\backslash \mathcal{N}$.  Then we define, for $w\in W_F$: $$(P,t)^w=(P^{\Delta(\tilde{w})},\tilde{w}^{-1}t).$$  Then the condition (iii) above shows that this definition does not depend on the choice of lift $\tilde{w}$, and the condition in Eq.~\ref{semidirect} means exactly that this action preserves the equivalence relation $(P^k,t)\sim (P,kt)$ for $k\in\mathcal{K}^1$.  We arrive at a well-defined semilinear action
\begin{equation}
\GL_2(F)\times B^\times\times W_F\to\Aut\Ind\mathfrak{X}.
\end{equation}
This action induces an action of the same triple product group on $\Ind_{\mathcal{K}^1}^{\GL_2(F)\times B^\times} H^1(\mathfrak{X},\QQ_\ell)$.

Call an action $\mathcal{N}\to\Aut\mathfrak{X}$ {\em compatible with the action of $\mathcal{K}^1$} if it satisfies (i)--(iii) above.


Recall that each admissible character $\chi$ of $E^\times$ of essential level $m$ determines representations $\Lambda_\chi$ and $\Lambda_{\chi'}$ of $\GL_2(F)$ and $B^\times$ respectively for which $\pi_\chi=\Ind_{J}^{\GL_2(F)}\Lambda_\chi$ and $\pi'_{\chi}=\Ind_{J'}\Lambda'_{\chi}$.  The tensor product representation $\Lambda_\chi\otimes\Lambda'_{\chi}$ is an extension to $J_A\times J_B$ of an irreducible representation of $\mathcal{K}^1$ of the form $\tau_\alpha^1$, where $\alpha\in E^\times$ is a minimal element of valuation $-m$.   We have that
$$\Hom_{\mathcal{K}^1}\left(\Lambda_\chi\otimes\Lambda_{\check{\chi}}'\vert_{\mathcal{K}^1},H^1(\mathfrak{X},\QQ_\ell)\right)
=\Hom_{\mathcal{K}^1}\left(\tau_\alpha,H^1(\mathfrak{X},\QQ_\ell)\right)$$
is one-dimensional.

Now suppose $\Delta$ is an action of $\mathcal{N}$ on $\mathfrak{X}$ which is compatible with the action of $\mathcal{K}^1$.  Then there is a well-defined action of $E^\times$ on $\Hom_{\mathcal{K}^1}\left(\Lambda_\chi\otimes\Lambda_{\check{\chi}}'\vert_{\mathcal{K}^1},H^1(\mathfrak{X},\QQ_\ell)\right)$ defined as follows.  If $\lambda$ is a $\mathcal{K}^1$-equivariant map from $\Lambda_\chi\otimes\Lambda_{\check{\chi}}'$ into $H^1(\mathfrak{X},\QQ_\ell)$, and $\beta\in \mathcal{N}$, then define $\lambda^\beta$ to be the map $v\mapsto \Delta(\beta)\lambda(j(\beta,1)^{-1}v)$.   The linear map $\lambda^\beta$ so defined is $\mathcal{K}^1$-equivariant precisely because $\Delta$ is compatible with the action of $\mathcal{K}^1$.



\begin{Theorem} \label{Weilaction} There exists a semilinear action $\Delta=\Delta_{x,m}\from W_F\to\Aut\mathfrak{X}$ compatible with the action of $\mathcal{K}^1$ with the following property:  For every admissible character $\chi$ of $E^\times$ of essential level $m$, the group $E^\times$ acts on $\Hom_{\mathcal{K}^1}\left(\Lambda_\chi\otimes\Lambda_{\check{\chi}}'\vert_{\mathcal{K}^1},H^1(\mathfrak{X},\QQ_\ell)\right)$ through the character $\check{\chi}\Delta_{\check{\chi}}$.
\end{Theorem}

The remainder of the section will be devoted to proving Theorem~\ref{Weilaction}.   Assume it now, and let $H^1(\Ind\mathfrak{X},\QQ_\ell)$ be given the action of $W_F$ arising from the action $\Delta$ of the Theorem.  We record the following consequence:

\begin{Cor} \label{realizesLLC} Let $\pi$ be a smooth irreducible representation of $\GL_2(F)$.  We have that $\Hom_{\GL_2(F)}\left(\pi,H^1(\Ind \mathfrak{X},\overline{\Q}_\ell)\right)$ equals $\JL(\pi)\otimes\mathcal{L}_\ell(\check{\pi})$ when $\pi=\pi_\chi$ for an admissible character $\chi$ of essential level $m$, and vanishes otherwise.
\end{Cor}

\begin{proof}   By Theorem~\ref{existenceofX}, the space $\Hom_{\GL_2(F)}\left(\pi,H^1(\Ind \mathfrak{X},\overline{\Q}_\ell)\right)$ equals $0$ if $\pi$ is not of the form $\pi_\chi$ for $\chi$ an admissible character of level $m$.  Therefore suppose $\pi=\pi_\chi$ for such a character.  Then again by Theorem~\ref{existenceofX} we have that $$\rho=\Hom_{\GL_2(F)\times B^\times}\left(\pi\otimes\JL(\check{\pi}),H^1(\Ind\mathfrak{X},\QQ_\ell)\right)$$ is a 2-dimensional representation of $W_{E/F}$.   We claim that $\rho=\mathcal{L}_\ell(\check{\pi})$.  By Theorem~\ref{elladic}, $\mathcal{L}_\ell(\pi)=\Ind_{E/F}\chi\Delta_\chi$.  Therefore to prove Cor.~\ref{realizesLLC} it is enough to show that $\rho\vert_{E^\times}$ contains the character $\check{\chi}\Delta_{\check{\chi}}$.

We have that $\pi\otimes\JL(\check{\pi})=\Ind_{J_A\times J_B}^{\GL_2(F)\times B^\times} \Lambda_\chi\otimes\Lambda'_{\check{\chi}}$.
By Frobenius reciprocity, $$\rho=\Hom_{\mathcal{K}^1}\left(\left(\Ind_{J_A\times J_B}^{\GL_2(F)\times B^\times} \Lambda_\chi\otimes\Lambda'_{\check{\chi}}\right)\vert_{\mathcal{K}^1},H^1(\mathfrak{X},\QQ_\ell)\right).$$  By Mackey's theorem, $\rho\vert_{E^\times}$ contains $\Hom_{\mathcal{K}^1}\left( \left(\Lambda_\chi\otimes\Lambda'_{\check{\chi}}\right)\vert_{\mathcal{K}^1},H^1(\mathfrak{X},\QQ_\ell)\right)$, which equals the character $\check{\chi}\Delta_{\check{\chi}}$ by Theorem~\ref{Weilaction}.
\end{proof}

\subsection{Case of $m=0$, $E/F$ unramified}
\label{levelzerocalc}

In this case, $\mathfrak{X}$ is the nonsingular projective curve with affine equation $XY^q-X^qY=1$.   Let $\Phi=(\Phi_1,\Phi_2)\in\mathcal{N}$ be a lift of the nontrivial element of $\Gal(E/F)$ for which $\Phi_1\in\mathfrak{A}^\times$, $\Phi_2\in\gP_B^{-1}$.  Then the group  $\mathcal{N}$ is generated by $j_x(E^\times)$, the subgroup $\set{1}\times j_{x,B}(E^\times)$ and the element $\Phi$.

We now describe the required action $\Delta\from \mathcal{N}\to\Aut\mathfrak{X}$.  The automorphism $\Delta(\Phi)$ will be the one lying over $\Fr_q\in\Gal(\overline{k}/k)$ which effects $(X,Y)\mapsto (uX,uY)$ on coordinates;  here $u\in\overline{k}$ is a root of $u^{q+1}=-1$.   Now let $\beta\in E^\times$;  the automorphism $\Delta((1,j_{x,B}(\beta))$ will be the one lying over $\Fr_q^{-2v_E(\beta)}$ which is trivial on coordinates.  Finally, the action of $j_x(E^\times)$ through $\Delta$ shall be as demanded by condition (3) of the previous paragraph.   It is easily checked that this defines a well-defined action $\Delta\from\mathcal{N}\to\Aut\mathfrak{X}$ which is compatible with the action of $\mathcal{K}^1$.





By Prop.~\ref{hermitian}, $\Delta(j_x(\varpi_F))$ acts on $H^1(\mathfrak{X},\QQ_\ell)$ as the scalar $-q$.    Let $\chi$ be an admissible character of $E^\times$ of essential level 0.    Thus if $\beta\in E^\times$, the action of $(1,j_{x,B}(\beta^{-1}))$ on $H^1(\mathfrak{X},\QQ_\ell)$ is through $\Delta_\chi(\beta)$.  On the other hand, $\Lambda'_{\chi}$ is a character of $F^\times\OO_B^\times$ for which $\Lambda'_\chi(j_{x,B}(\beta))=\chi(\beta)$.  Therefore the action of $E^\times\subset W_{E/F}$ on $\Hom_{\mathcal{K}^1}\left(\Lambda_\chi\otimes\Lambda'_{\check{\chi}}\vert_{\mathcal{K}^1},H^1(\mathfrak{X},\QQ_\ell)\right)$ is through the character $\check{\chi}\Delta_{\check{\chi}}$.  This establishes Thm.~\ref{Weilaction} in this case.

\subsection{Case of $m>0$, $E/F$ unramified} Here $\mathfrak{X}$ is the projective curve with affine equation $X^q+X=Y^{q+1}$.   Assume for the moment that $m$ is odd.  Let $\Phi\in\mathcal{N}$ be as in the previous paragraph, and let $\beta\in  E^\times$.  There is a unique semilinear action $\Delta\from\mathcal{N}\to\Aut\mathfrak{X}$ compatible with the action of $\mathcal{K}^1$ for which $\Delta(\Phi)$ and $\Delta(j_{x,A}(\beta),1)$ are trivial on coordinates.  Let $\chi$ be an admissible character of $E^\times$ of essential level $m$.  Then $\Lambda_\chi$ is a character whose restriction to $j_{x,A}(E^\times)$ is simply $\chi$.  By an argument rather similar to that of the previous paragraph, $E^\times\subset W_{E/F}$ acts on $\Hom_{\mathcal{K}^1}\left(\Lambda_\chi\otimes\Lambda'_{\check{\chi}}\vert_{\mathcal{K}^1},H^1(\mathfrak{X},\QQ_\ell)\right)$ through the character $\check{\chi}\Delta_{\check{\chi}}$.

The argument for $m$ even is similar, but with the roles of $A$ and $B$ reversed.


\subsection{Case of $m=0$, $E/F$ ramified}   Let $\Phi=(\Phi_1,\Phi_2)\in\mathcal{N}$ be a lift of the nontrivial element of $\Fr_q\in\Gal(\overline{k}/k)$ for which $\Phi_1\in\mathfrak{A}^\times$, $\Phi_2\in\OO_B^\times$.  Then the group $\mathcal{N}$ is generated by the subgroup $j_x(E^\times)$, the subgroup $j_{x,A}(E^\times)\times\set{1}$ and the element $\Phi$.

Here $\mathfrak{X}$ is the projective line.  Since $H^1(\mathfrak{X},\QQ_\ell)$ is trivial, it is enough to find an action $\Delta$ of $\mathcal{N}$ on $\mathfrak{X}$ which is compatible with the action of $\mathcal{K}^1$.  For $\beta\in E^\times$ we set $\Delta((j_{x,A}(\beta),1)$ to be the automorphism which is trivial on coordinates, whereas $\Delta(\Phi)$ will be the inversion map $X\mapsto 1/X$.


\subsection{Case of $m>0$ even, $E/F$ ramified}  Once again, $\mathfrak{X}$ is the projective line.  We set $\Delta(j_{x,A}(\beta),1)$ to be trivial on coordinates, while $\Delta(\Phi)$ will be the linear map defined by $X\mapsto (-1)^{m/2}X$.
Then $\Delta$ is compatible with the action of $\mathcal{K}^1$.

\subsection{Case of $m>0$ odd, $E/F$ ramified}  Here $\mathfrak{X}$ is the projective curve with affine equation $X^q-X=Y^2$.   The group $\mathcal{Q}^1$ is the direct product of the group $\gp_E^m/\gp_E^{m+1}$ by a group of order 2.  Choose a uniformizer $\varpi_E$ of $E$;  we get an isomorphism $\gp_E^m/\gp_E^{m+1}\to k$ by $t\mapsto t\varpi_E^{-m}\pmod{\gp_E}$.  Assume an element $t=1+\varpi_E^mr\in \gp_E^m/\gp_E^{m+1}\subset\mathcal{Q}^1$ acts on $\mathfrak{X}$ through $(X,Y)\mapsto (X+r,Y)$.

We define a semilinear action $\Delta$ of $\mathcal{N}$ on $\mathfrak{X}$ as follows:
\begin{eqnarray*}
\Delta((j_{x,A}(a),1))(X,Y)&=&\left(X,\left(\frac{a}{q}\right)Y\right),\;a\in\OO_E^\times\\
\Delta((j_{x,A}(\varpi_E),1))(X,Y)&=&(X,\eps Y)\\
\Delta(\Phi)(X,Y)&=&(-X,\sqrt{-1}Y)
\end{eqnarray*}
Here $\eps=-(-1)^{\frac{q-1}{2}\frac{m-1}{2}}$.

Now let $\alpha\in E^\times$ be minimal of valuation $-m$, and let $\chi\in Y_\alpha$.   Let $\tau_\alpha^1$ be the character of $\mathcal{K}^1$ as defined in~\ref{charactersofK}.
By Prop.~\ref{frobeigenvalue} we have
\begin{eqnarray*}
\tr\left(\Delta((j_{x,A}(\varpi_E),1))\biggm\vert H^1(\mathfrak{X},\QQ_\ell)^{\tau_\alpha^1}\right) &=& -\eps \sum_{t\in \gp_E^m/\gp_E^{m+1}} \left(\frac{t\beta\;\text{mod}\;\gp_E}{q}\right)\nu(t) \\
&=& -\eps\sum_{r\in\OO_E/\gp_E} \left( \frac{r\alpha^{-1}\varpi_E^n\;\text{mod}\;{\gp_E}}{q}\right)\psi(r)\\
&=& \kappa_{E/F}(-1)^{(m-1)/2}\kappa_{E/F}(\zeta)\tau(\kappa_{E/F},\psi),
\end{eqnarray*}
where $\zeta\in\OO_F^\times$ satisfies $\alpha\varpi_E^m\equiv\zeta\pmod{\gp_E}$.   We claim this equals $\Delta_\chi(\varpi_E)$.  Indeed,
\begin{eqnarray*}
\Delta_\chi(\varpi_E)&=&\kappa_{E/F}(\zeta)\tau(\kappa_{E/F},\psi)^mq^{(1-m)/2}\\
&=&\kappa_{E/F}(\zeta)\kappa_{E/F}(-1)^{(m-1)/2}\tau(\kappa_{E/F},\psi)
\end{eqnarray*}
because $\tau(\kappa_{E/F},\psi)^2=\kappa_{E/F}(-1)q$.

Meanwhile, the character $\Lambda_\chi$ takes the value $\chi(\alpha)$ on an element $j_{x,A}(\alpha)$.  We find that $E^\times\subset W_{E/F}$ acts on the space $\Hom_{\mathcal{K}^1}\left(\Lambda_\chi\otimes\Lambda'_{\check{\chi}}\vert_{\mathcal{K}^1},H^1(\mathfrak{X},\QQ_\ell)\right)$ through the character $\check{\chi}\Delta_{\check{\chi}}$.

\section{Construction of the stable Lubin-Tate curve}
\label{construction}

We now assume that $F$ has odd residual characteristic.  Thus there are three distinct quadratic extensions of $F$.  Call these $E_0$, $E_1$ and $E_2$, with $E_0/F$ unramified.  We turn to the task of gluing together the curves $\mathfrak{X}_{x,m}$ for $x\in M^{\text{CM}}$ and $m\geq 0$ to form a stable curve which has the desired properties of Thm.~\ref{mainthm}.

\subsection{Base Points}  We identify certain special points of $\mathfrak{X}_{x,m}$, which will later be used as sites of gluing.

\label{basepoints}
\begin{prop}  There exists a point $P\in \mathfrak{X}_{x,m}(\overline{k})$ with the following properties.
\begin{itemize}
\item[(i)] The stabilizer of $P$ in $\mathcal{K}_{x,m}^1$ is exactly $\mathcal{K}_{x,m+1}^1$.
\item[(ii)] There are no nontrivial $\mathcal{K}_{x,m}^1$-equivariant linear automorphisms of the pair $(\mathfrak{X}_{x,m},P)$.
\end{itemize}
Furthermore, if $Q\in\mathfrak{X}_{x,m}(\overline{k})$ is another point satisfying (i) and (ii) then there exists a $\mathcal{K}_{x,m}^1$-equivariant automorphism of $\mathfrak{X}_{x,m}$ carrying $P$ onto $Q$.
\end{prop}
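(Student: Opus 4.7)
The argument proceeds case by case, following the five-case analysis of Theorem~\ref{existenceofX} in \S\ref{JLrealization}. In every case the action of $\mathcal{K}_{x,m}^1$ on $\mathfrak{X}_{x,m}$ factors through the explicit finite quotient $\mathcal{Q}^1 = \mathcal{K}_{x,m}^1/\mathcal{L}_{x,m}^1$, so condition (i) is equivalent to the statement that the $\mathcal{Q}^1$-stabilizer of $P$ equals the image $\bar{I}_m$ of $\mathcal{K}_{x,m+1}^1$ in $\mathcal{Q}^1$. The first step is to verify (using $\mathcal{L}_{x,m}^1 \subset \mathcal{K}_{x,m+1}^1$, which follows from $F^\times U_E^1\subset E^\times$) that $\bar{I}_m$ is canonically isomorphic to $E^\times/F^\times U_E^1$ and gives the splitting of the exact sequence in Prop.~\ref{Kxm}(iii); in particular $\bar{I}_m$ is cyclic of order $q+1$ (resp.\ $2$) when $E/F$ is unramified (resp.\ ramified). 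This reduces the problem to a purely geometric statement about the action of $\mathcal{Q}^1$ on the explicit curve $\mathfrak{X}_{x,m}$.

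Next I would exhibit the candidate $P$ in each case by locating a point fixed by $\bar{I}_m$ but with trivial stabilizer under the ``vector-space-like'' normal subgroup $\Gr_m^1$. On the Deligne--Lusztig curve $XY^q - X^qY = 1$ take any $\mathbb{F}_{q^2}$-rational point: the anisotropic torus image in $\mathcal{Q}^1$ fixes it, and the identification with $\bar{I}_m\cong k_E^\times/k^\times$ matches exactly. On the Hermitian curve $XZ^q + X^qZ = Y^{q+1}$ take the point $[1:0:0]$, fixed by the diagonal Levi torus but free under the unipotent radical acting by translation in $X$. On the hyperelliptic curve $Y^2 = X^q - X$ take the unique point at infinity, fixed by the $\Z/2$-involution and free under the $\mathbf{F}_q$-translations. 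On $\mathbb{P}^1$ in the ramified cases take the fixed point of $\rho(\varpi_E)$ (either $0$ and $\infty$ for the multiplicative action in \S\ref{ramifiedlevelzero}, or the unique reflection fixed point for the additive action in \S\ref{ramifiedevenlevel}); the explicit formulas in these paragraphs make it immediate that the $\mathcal{Q}^1$-stabilizer of $P$ is exactly $\bar{I}_m$.

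For condition (ii) I would identify the group of $\mathcal{K}_{x,m}^1$-equivariant automorphisms of $\mathfrak{X}_{x,m}$ as the centralizer $Z$ of the image of $\mathcal{Q}^1$ inside the fully-known automorphism group of the curve: $\mathrm{PGL}_2(\overline{k})$ for $\mathbb{P}^1$, $\mathrm{PGU}_3(k_E)$ for the Hermitian curve, and standard explicit finite groups for the Deligne--Lusztig and hyperelliptic cases. In each case $Z$ turns out to be small and abelian (a torus or a finite cyclic group) and acts freely on the locus of points with stabilizer $\bar{I}_m$, so the only element of $Z$ fixing $P$ is the identity. The concluding uniqueness assertion follows at once, because this same $Z$ acts transitively on the set of points satisfying (i) and (ii): any other admissible $Q$ lies in the $Z$-orbit of $P$, and the unique element of $Z$ carrying $P$ to $Q$ is the required $\mathcal{K}_{x,m}^1$-equivariant automorphism.

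The main obstacle is the Hermitian-curve case, where $\mathcal{Q}^1$ sits as a Borel of $\mathrm{PGU}_3(k_E)$ with two-step unipotent radical (cf.\ Lemma~\ref{GR1}(iii)); one must carefully separate the toral piece, the abelian-unipotent ``vector'' piece, and the central ``non-abelian'' piece, and verify that $P$ is simultaneously fixed by the torus and has trivial stabilizer in \emph{both} layers of the radical. By contrast, in the remaining cases $\Gr_m^1$ acts by translations or multiplications with an obvious candidate fixed point for the involutive/toral piece, and the whole verification reduces to a direct matrix calculation.
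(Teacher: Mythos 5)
Your overall strategy coincides with the paper's: reduce condition (i) to a statement about the explicit action of $\mathcal{Q}^1$ on the known model of $\mathfrak{X}_{x,m}$, exhibit a point whose $\mathcal{Q}^1$-stabilizer is the image of $\mathcal{K}_{x,m+1}^1$, and handle (ii) and the transitivity claim via the centralizer of the image of $\mathcal{Q}^1$ in $\Aut\mathfrak{X}_{x,m}$. The gap is in the concrete choices of $P$, which are wrong in most cases, and in a systematic way: you have repeatedly picked the point fixed by \emph{all} of $\mathcal{Q}^1$ rather than the point whose stabilizer is only the toral part. On the Hermitian curve, $[1:0:0]$ is the unique point at infinity of the affine model $X+X^q=Y^{q+1}$, and $\mathcal{Q}^1$ lies in the Borel of the unitary group stabilizing that point; its stabilizer in $\mathcal{K}_{x,m}^1$ is therefore the whole group and (i) fails. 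The correct point is the affine origin $(0,0)$, which is moved by the central translations and fixed exactly by the image of $E^\times$. Likewise on $Y^2=X^q-X$ the point at infinity is the \emph{unique fixed point} of the $\mathbf{F}_q$-translations (this is stated explicitly in \S\ref{curves}), the exact opposite of your claim that it is free under them; the right choice is $(0,0)$ (or any $(a,0)$, $a\in k$). In the case of \S\ref{ramifiedlevelzero} the fixed points of the inversion $X\mapsto 1/X$ are $\pm 1$, not $0$ and $\infty$; the latter two are fixed by all of $k_2^\times$ and are swapped by the inversion, so they violate (i), and the correct point is $1$. In the case of \S\ref{ramifiedevenlevel} the reflection $X\mapsto -X$ has two fixed points, $0$ and $\infty$, and only $0$ works, since $\infty$ is also fixed by the translations. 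Finally, on the Deligne-Lusztig curve ``any $\mathbf{F}_{q^2}$-rational point'' does not suffice: a general such point has stabilizer a $\mathcal{Q}^1$-conjugate of the image of $\mathcal{K}_{x,1}^1$, not that image itself. One must take a common eigenvector of the specific embedded torus $\kappa(k_B^\times)$ determined by $j_{x,A}$ and $j_{x,B}$, with the eigenvalue matching $\beta$ rather than $\beta^q$, rescaled to lie on the curve.

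The distinction you have blurred is structural, not cosmetic: in \S\ref{construction} the point at infinity of the level-$(m+1)$ curve serves as the gluing point $P_{e,v'}$ precisely because it is fixed by all of $\mathcal{K}_{x,m+1}^1$, whereas the base point of this proposition must have stabilizer $\mathcal{K}_{x,m+1}^1$ \emph{strictly inside} $\mathcal{K}_{x,m}^1$; you have interchanged these two kinds of distinguished points. Your treatment of (ii) and of the final uniqueness assertion through the centralizer is in the spirit of the paper's argument (where the equivariant automorphisms are, e.g., $(X,Y)\mapsto(\zeta X,\zeta Y)$ with $\zeta^{q+1}=1$ for the Deligne-Lusztig curve, translations $(X,Y)\mapsto(X+a,Y)$ for the hyperelliptic curve, and $X\mapsto\pm X$ for $\mathbf{P}^1$), but it is only asserted, and in any case with the base points as you chose them the verification of (i) fails before that stage is reached.
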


\begin{proof}

In the case where $E/F$ is unramified and $m=0$, $\mathfrak{X}_{x,m}$ is the curve $XY^q-X^qY=1$.   Assume that $\mathfrak{A}_x=M_2(\OO_F)$.   We have the embeddings $j_{x,A}\from\OO_E\to M_2(\OO_F)$ and $j_{x,B}\from \OO_E\to \OO_B$.  Reducing modulo $\gp_E$ gives embeddings $k_E\injects M_2(k)$ and $k_E\injects k_B$.   Composing one map with the inverse of the other gives an embedding $\kappa\from k_B\injects M_2(k)$.  Let $P=(u,v)\in \mathbf{A}^2(\overline{k})$ be a point which satisfies $\beta P=\kappa(\beta)P$ for all $\beta\in k_B^\times$.  Let $\gamma\in\overline{k}^\times$ satisfy $\beta^{q+1}=uv^q-u^qv$.  Then $P=(\gamma^{-1}u,\gamma^{-1}v)\in\mathfrak{X}_{x,0}(\overline{k})$;  then the stabilizer of this point in $\mathcal{K}_{x,0}^1$ is $\mathcal{K}_{x,1}^1$, establishing (i).  For (ii), the only automorphisms of $\mathfrak{X}_{x,0}$ commuting with $\mathcal{K}_{x,0}^1$ are of the form $(X,Y)\mapsto (\zeta X, \zeta Y)$ with $\zeta^{q+1}=1$, and no such nontrivial automorphism could fix the point $P$.  Finally, all other points $Q\in \mathfrak{X}_{x,0}$ satisfying (i) and (ii) are of the form $\zeta P$ for $\zeta^{q+1}=1$, and this is indeed the translate of $P$ by a $\mathcal{K}_{x,0}^1$-equivariant automorphism of $\mathfrak{X}_{x,0}$.

In the case where $E/F$ is unramified and $m>0$, $\mathfrak{X}_{x,m}$ is the curve $X+X^q=Y^{q+1}$.  The point $P=(0,0)$ is the only point satisfying the required properties.

In the case where $E/F$ is ramified and $m=0$, $\mathfrak{X}_{x,0}$ is the projective line, with the action of $\mathcal{K}_{x,0}^1$ as in~\ref{ramifiedlevelzero}.   Let $P$ be the point $1\in\mathbf{P}^1(\overline{k})$;  the stabilizer in $\mathcal{K}_{x,0}^1$ of $P$ is $\mathcal{K}_{x,1}^1$.   The automorphisms of the projective line commute with the action of $\mathcal{K}_{x,0}^1$ are only $X\mapsto \pm X$, so property (2) holds.    Conversely, the only other point satisfying (i) and (ii)
is $-1$.

In the case where $E/F$ is ramified and $m>0$ is even, $\mathfrak{X}_{x,m}$ is the projective line, with the action of $\mathcal{K}_{x,0}^1$ as in~\ref{ramifiedevenlevel}.  Then $P$ is the point $0\in\mathbf{P}^1(\overline{k})$;  this is the only point satisfying (i) and (ii).

Finally, in the case where $E/F$ is ramified and $m>0$ is odd, $\mathfrak{X}_{x,m}$ is the curve $Y^2=X^q-X$.  Then $P=(0,0)$ satisfies (i) and (ii).  So does any point of the form $(a,0)$ with $a\in k$, but then this point is the translate of $P$ by the $\mathcal{K}_{x,m}^1$-equivariant isomorphism $(X,Y)\mapsto (X+a,Y)$.
\end{proof}

For every $x\in M^{\text{CM}}$ and every $m\geq 0$, we now {\em choose} a point $P_{x,m}\in\mathfrak{X}_{x,m}(\overline{k})$ satisfying the properties of Prop.~\ref{basepoints}.



\subsection{Similarity Classes of CM points.}

The vertices of the dual graph $\Gamma$ will be classes of CM points under a certain family of equivalence relations $\sim_m$ indexed by nonnegative integers.  These equivalence relations become stronger as $m$ increases.


\begin{defn}
\label{equivalencedefn}
Suppose $x,y\in M^{\text{CM}}$. For $m\geq 0$, we write $x\sim_m y$ if all of the following conditions hold:
\begin{itemize}
\item[(i)] The points $x$ and $y$ lie in the same geometrically connected component of $M$.
\item[(ii)] If $m=0$, then $x$ and $y$ have the same lattice chains (see~\ref{latticechains}).
\item[(iii)] If $m>0$, then there exists $t\in\mathcal{K}_{x,m}^1$ with $y=x^t$.
\end{itemize}
\end{defn}



We gather the following facts:
\begin{enumerate}
\item If $x\sim_m y$, then $x\sim_{m'} y$ for any $m'<m$.
\item If $x\sim_m y$ for all $m$, then $x=y$.
\item If $x\sim_m y$, then $\mathcal{K}_{x,m}=\mathcal{K}_{y,m}$.
\item The relation $\sim_m$ is preserved by the action of $G\times B^\times\times W_F$.
\item If $x\in M^E$, let $[x]_m$ be its equivalence class under $\sim_m$.  Then the stabilizer in $G\times B^\times$ of $[x]_m$ is exactly $\mathcal{K}_{x,m}^1$.
\item If $x\in M^{E}$ and $y\in M^{E'}$ have $x\sim_m y$, then $E=E'$ {\em except} in the following scenario:  $m=0$, $E$ and $E'$ are the two ramified extensions of $F$, $x$ and $y$ lie in the same connected component of $M$, and $x$ and $y$ have the same lattice chains.
\end{enumerate}

The curve $\mathfrak{X}_{x,m}$ does not depend on the choice of $x$ within the similarity class $[x]_m$ in the following strong sense.

\begin{prop}\label{independenceofx}  Let $x\sim_m y$.  Suppose there exists $t\in \mathcal{K}_{x,m}^1$ with $y=x^t$.
There is a unique $\mathcal{K}_{x,m}^1$-equivariant isomorphism $\phi\from\mathfrak{X}_{x,m}\to\mathfrak{X}_{y,m}$ for which $\phi(P_{x,m}^t)=P_{y,m}$.  The isomorphism $\phi$ does not depend on $t$.
\end{prop}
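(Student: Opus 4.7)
The plan is to break the proposition into three assertions---existence of $\phi$, uniqueness of $\phi$, and independence of $\phi$ from the choice of $t$---and reduce each to Prop.~\ref{basepoints} together with Thm.~\ref{CMpointbasic}.

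As a preliminary, I would record that because $t$ lies in $\mathcal{K}_{x,m}^1$, Prop.~\ref{Kxm}(i) yields $\mathcal{K}_{y,m} = t^{-1}\mathcal{K}_{x,m}t = \mathcal{K}_{x,m}$ (inner conjugation preserves the subgroup), and similarly $\mathcal{K}_{y,m}^1 = \mathcal{K}_{x,m}^1$ and $\mathcal{L}_{y,m}^1 = \mathcal{L}_{x,m}^1$. In particular, the quotient $\mathcal{Q}^1$ and all of the data feeding the construction of \S\ref{JLrealization} coincide for $x$ and $y$. Since the recipes given there depend only on the isomorphism class of $E/F$, the parity of $m$, and the action of $\mathcal{Q}^1$, they produce $\mathcal{K}^1$-equivariantly isomorphic curves whether we start from $x$ or $y$. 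I would fix any such isomorphism $\psi\from \mathfrak{X}_{x,m}\to\mathfrak{X}_{y,m}$.

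For existence, $P_{x,m}^t$ has stabilizer $t^{-1}\mathcal{K}_{x,m+1}^1 t = \mathcal{K}_{y,m+1}^1$ by Prop.~\ref{Kxm}(i), matching that of $P_{y,m}$. The two points $\psi(P_{x,m}^t)$ and $P_{y,m}$ of $\mathfrak{X}_{y,m}$ therefore both satisfy clauses (i) and (ii) of Prop.~\ref{basepoints}, so its final clause provides a $\mathcal{K}^1$-equivariant automorphism $\alpha$ of $\mathfrak{X}_{y,m}$ with $\alpha(\psi(P_{x,m}^t)) = P_{y,m}$, and $\phi := \alpha\circ\psi$ is the required map. Uniqueness is immediate: a second candidate $\phi'$ would render $\phi'\circ\phi^{-1}$ a $\mathcal{K}^1$-equivariant automorphism of $\mathfrak{X}_{y,m}$ fixing $P_{y,m}$, which is trivial by Prop.~\ref{basepoints}(ii). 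For independence of $t$, take a second $t'\in\mathcal{K}_{x,m}^1$ with $x^{t'} = y$ and set $s := t't^{-1}$. Then $s$ stabilizes $x$, so by Thm.~\ref{CMpointbasic}(i), $s \in j_x(E^\times)$. Using the right-action equivariance of $\phi_{t'}$, the defining condition $\phi_{t'}(P_{x,m}^{t'}) = P_{y,m}$ rewrites as $\phi_{t'}(P_{x,m}) = P_{y,m}^{(t')^{-1}}$, so $\phi_{t'}(P_{x,m}^t) = P_{y,m}^{(t')^{-1}t} = P_{y,m}^{t^{-1} s^{-1} t}$. By Thm.~\ref{CMpointbasic}(ii), $t^{-1}s^{-1}t \in t^{-1}j_x(E^\times)t = j_y(E^\times)$, which sits inside $\mathcal{K}_{y,m+1}^1$ (it belongs to every $\mathcal{K}_{y,m'}$ and has trivial $\delta$). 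Hence $t^{-1} s^{-1} t$ fixes $P_{y,m}$, giving $\phi_{t'}(P_{x,m}^t) = P_{y,m} = \phi_t(P_{x,m}^t)$, and the uniqueness above forces $\phi_t = \phi_{t'}$.

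The main obstacle I anticipate is making the existence argument fully rigorous---that is, verifying that the piecewise construction of $\mathfrak{X}_{x,m}$ in \S\ref{JLrealization} (which involves auxiliary choices like putting $\mathfrak{A}_x$ in standard form, selecting uniformizers of $E$, or identifying $k_B$ with $k_E$) yields a curve genuinely insensitive to the representative $x$ of the similarity class. Once one checks case by case that altering these choices merely precomposes by a $\mathcal{K}^1$-equivariant automorphism of the abstract curve, the existence of $\psi$ is clear and the rest of the argument is purely formal.
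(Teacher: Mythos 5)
Your proof is correct and follows essentially the same route as the paper: existence and uniqueness are both reduced to Prop.~\ref{basepoints} after noting that the curves and $\mathcal{K}^1$-actions attached to $x$ and $y$ arise from identical recipes (a point the paper asserts just as briefly as you do, so your flagged ``obstacle'' is not a gap relative to the paper's own standard of rigor). The one genuine addition is your explicit argument that $\phi$ is independent of $t$, which the paper's proof does not address; it is correct as written, and can even be streamlined: writing $t'=st$ with $s=t't^{-1}$, Thm.~\ref{CMpointbasic}(i) gives $s\in j_x(E^\times)$, and since $j_x(E^\times)\subset\mathcal{K}_{x,m+1}^1$ (it lies in every $\mathcal{K}_{x,m'}$ and $\delta$ kills it), one gets $P_{x,m}^{t'}=\left(P_{x,m}^{s}\right)^{t}=P_{x,m}^{t}$ directly, so $\phi_t$ and $\phi_{t'}$ satisfy the same defining condition and coincide by uniqueness.
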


\begin{proof}  Certainly there exists a $\mathcal{K}_{x,m}^1$-equivariant isomorphism $\phi\from\mathfrak{X}_{x,m}\to\mathfrak{X}_{y,m}$, because the equations defining these curves and the actions of the group $\mathcal{K}_{x,m}^1$ on them are identical.  Then $\phi(P_{x,m})^t$ satisfies the properties of Prop.~\ref{basepoints}, and therefore there exists a $\mathcal{K}_v^1$-equivariant automorphism $f$ of $\mathfrak{X}_{y,m}$ carring $P_{y,m}$ onto $\phi\left(P_{x,m}^{t^{-1}}\right)$.  Renaming $\phi$ as $f^{-1}\circ \phi$, we have a $\mathcal{K}_{v}^1$-equivariant isomorphism $\phi\from\mathfrak{X}_{x,m}\to\mathfrak{X}_{y,m}$ carrying $P_{x,m}^t$ onto $P_{y,m}$.  Again by Prop.~\ref{basepoints}, this isomorphism is {\em unique}.
\end{proof}

We extend Prop.~\ref{independenceofx} to the case where $x\in M^{E_1}$, $y\in M^{E_2}$, and $x\sim_0 y$.  Choose uniformizers $\varpi_{E_1}$ and $\varpi_{E_2}$ for $E_1$ and $E_2$.  Write $j_y(\varpi_{E_2})=j_x(\varpi_{E_1})t$; then $t\in\mathcal{K}_{x,0}^1$.  Referring to~\ref{ramifiedlevelzero}, the action of $t$ on $\mathfrak{X}_{x,0}$ is an automorphism of the form $X\mapsto \alpha X$ for some $\alpha\in k_B^\times$.  Let $u\in \overline{k}$ be a root of $u^2=\alpha$;  then $X\mapsto uX$ is a $\mathcal{K}_{x,0}^1$-equivariant isomorphism from $\mathfrak{X}_{x,0}$ onto $\mathfrak{X}_{y,0}$.

Let $S_m$ be the set of similarity classes under $\sim_m$, and let $$S=\coprod_{m\geq 0} S_m$$ be the disjoint union of the $S_m$.  There is an obvious ``level" map $S\to \Z_{\geq 0}$ sending $S_m$ to $m$;  a CM point $x\in E$ therefore determines a section of this map sending $m$ to the similarity class $[x]_m$ under $\sim_m$ containing $m$.  The group $\GL_2(F)\times B^\times$ acts on $S$ in the obvious manner.  The set $S$ will serve as set of vertices in the dual graph in our construction of the stable Lubin-Tate curve $\mathfrak{X}$.

If $v\in S$, say $v=[x]_m$ for $x\in M^E$, then let $\mathcal{K}_v^1=\mathcal{K}_{x,m}^1$ and $\mathfrak{X}_v=\mathfrak{X}_{x,m}$.  In light of Prop.~\ref{independenceofx} and the paragraph that follows it, $v$ determines the $\mathcal{K}_v^1$-equivariant curve $\mathfrak{X}_v$ up to unique isomorphism.  Let $\tilde{\mathfrak{X}}$ be the disjoint union of the curves $\mathfrak{X}_v$ for $v\in S$.  Then $\tilde{\mathfrak{X}}$ admits an action of $\GL_2(F)\times B^\times$ in the following manner.   Let $t\in\GL_2(F)\times B^\times$ and let $\phi_t\from\mathfrak{X}_{x,m}\to\mathfrak{X}_{x^t,m}$ be the unique isomorphism satisfying $\phi_t\circ u=(t^{-1}ut)\circ \phi_t$ for $u\in\mathcal{K}_{x,m}^1$ and also $\phi(P_{x,m})^t=P_{x^t,m}$.  Then $\phi_t$ determines an isomorphism $\mathfrak{X}_v\to\mathfrak{X}_{v^t}$ which does not depend on the choice of $x$.




The curve $\tilde{\mathfrak{X}}$ also admits a semilinear action of $W_F$ which commutes with the action of $\GL_2(F)$.  Given $w\in W_F$ and $v=[x]_m\in S$,  let $\tilde{w}\in\mathcal{N}_{x}$ be a lift of $j_{x,W}(w)\in j_x(E^\times)\backslash\mathcal{N}_{x}$.   There is the automorphism $\Delta_{x,m}(\tilde{w})$ of $\mathfrak{X}_{x,m}$ as in Theorem~\ref{Weilaction}.   The element $w$ shall carry $\mathfrak{X}_v$ onto $\mathfrak{X}_{v^w}$ via the map $\phi_{\tilde{w}}\circ \Delta_{x,m}$.

\begin{prop}
\label{tildeX}
 Let $\pi$ be a smooth irreducible representation of $\GL_2(F)$.
Then $$\Hom_{\GL_2(F)}\left(\pi,H^1(\tilde{\mathfrak{X}},\QQ_\ell)\right)$$ is isomorphic to $\JL(\pi)\otimes \mathcal{L}_\ell(\check{\pi})$ if $\pi$ is supercuspidal, and is 0 otherwise.
\end{prop}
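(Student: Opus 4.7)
The plan is to decompose $H^1(\tilde{\mathfrak{X}},\QQ_\ell)$ by vertices and regroup according to orbits of the $\GL_2(F)\times B^\times$-action on $S$. Writing
$$H^1(\tilde{\mathfrak{X}},\QQ_\ell)=\bigoplus_{v\in S}H^1(\mathfrak{X}_v,\QQ_\ell),$$
I observe that for each $\GL_2(F)\times B^\times$-orbit $O\subset S$ with chosen representative $v$ (so that the stabilizer of $v$ is exactly $\mathcal{K}_v^1$), the partial sum $\bigoplus_{v'\in O}H^1(\mathfrak{X}_{v'},\QQ_\ell)$ is canonically identified, as a $\GL_2(F)\times B^\times\times W_F$-module, with $H^1(\Ind\mathfrak{X}_v,\QQ_\ell)$, where $\Ind\mathfrak{X}_v=\mathfrak{X}_v\times_{\mathcal{K}_v^1}(\GL_2(F)\times B^\times)$ carries the semilinear Weil action of Theorem~\ref{Weilaction}. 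This identification is precisely what the prescription of the $\GL_2(F)\times B^\times\times W_F$-action on $\tilde{\mathfrak{X}}$ given in the paragraph immediately preceding Prop.~\ref{tildeX} is designed to produce.

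I would then invoke Cor.~\ref{realizesLLC} orbit by orbit: for $v=[x]_m$ with $x\in M^E$, $\Hom_{\GL_2(F)}(\pi,H^1(\Ind\mathfrak{X}_v,\QQ_\ell))$ equals $\JL(\pi)\otimes\mathcal{L}_\ell(\check\pi)$ when $\pi=\pi_\chi$ for an admissible character $\chi$ of $E^\times$ of essential level $m$, and vanishes otherwise. In particular the non-supercuspidal part vanishes identically. The only remaining issue is to verify that every supercuspidal $\pi$ is picked up by exactly one orbit's contribution.

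For the orbit bookkeeping I would use the transitivity of $\GL_2(F)\times B^\times$ on $M^E$ (Thm.~\ref{CMpointbasic}(i)) to see that for each $m\geq 1$ and each quadratic extension $E/F$ there is a unique orbit among $\{[x]_m:x\in M^E\}\subset S_m$. At $m=0$ the two ramified-$E$ orbits may fuse (fact~(6) of Defn.~\ref{equivalencedefn}), but in that case $\mathfrak{X}_v=\mathbf{P}^1$ has vanishing $H^1$; the same is true whenever $m>0$ is even and $E/F$ is ramified, so those orbits contribute nothing. Because $p$ is odd, every supercuspidal of $\GL_2(F)$ is tame and therefore of the form $\pi_\chi$ for an admissible pair $(E/F,\chi)$ unique up to the Galois involution $\chi\leftrightarrow\chi^\sigma$. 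The essential level $m$ and the field $E$ are invariants of $\pi$, so exactly one orbit contributes to its $\pi$-isotypic component.

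The main obstacle I anticipate is the bookkeeping match: one must reconcile the factor of $2$ appearing in Thm.~\ref{existenceofX}(ii) with the two-dimensional representation $\mathcal{L}_\ell(\check\pi)=\Ind_{E/F}(\check\chi\Delta_{\check\chi})$ of Thm.~\ref{elladic}, and confirm that the rational components appearing in the ``degenerate'' ramified cases (where no minimal element of $E^\times$ of the relevant valuation exists) really do contribute $0$ and introduce no spurious representations. Once this verification is in place, Thm.~\ref{Weilaction} and Cor.~\ref{realizesLLC} supply all of the representation-theoretic content and Prop.~\ref{tildeX} follows.
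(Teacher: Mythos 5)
Your proposal is correct and follows essentially the same route as the paper: decompose $H^1(\tilde{\mathfrak{X}},\QQ_\ell)$ as a direct sum over orbit representatives of induced modules $\Ind_{\mathcal{K}_v^1}^{\GL_2(F)\times B^\times}H^1(\mathfrak{X}_v,\QQ_\ell)$, apply Cor.~\ref{realizesLLC} orbit by orbit, and check via tameness (odd residue characteristic) and the invariance of $E$ and the essential level, together with transitivity from Thm.~\ref{CMpointbasic}, that each supercuspidal appears in exactly one summand. The factor-of-$2$ reconciliation you flag is already absorbed into Cor.~\ref{realizesLLC}, which directly produces $\JL(\pi)\otimes\mathcal{L}_\ell(\check{\pi})$, so no further verification is needed there.
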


\begin{proof}  Let $R$ be a set of representatives for the quotient $S/(\GL_2(F)\times B^\times)$.  For each $v\in R$ we have that the stabilizer of $v$ in $\GL_2(F)\times B^\times$ is exactly $\mathcal{K}_{v}^1$.  Then as $\GL_2(F)\times B^\times\times W_F$-modules we have $$H^1(\tilde{\mathfrak{X}},\QQ_\ell)=\bigoplus_{v\in R}\Ind_{\mathcal{K}_v}^{\GL_2(F)\times B^\times} H^1(\mathfrak{X}_v,\QQ_\ell).$$  The proposition now follows from
Cor.~\ref{realizesLLC} once we observe the following:  (1) Every supercuspidal representation of $\GL_2(F)$ appears in $\Ind_{\mathcal{K}_v}^{\GL_2(F)\times B^\times} H^1(\mathfrak{X}_v,\QQ_\ell)$ for some $v\in R$, and (2) no supercuspidal representation appears in two summands of the above direct sum.

For (1), we note that the residue characteristic of $F$ is odd, so that every supercuspidal representation of $\GL_2(F)$ is of the form $\pi_\chi$ for some admissible pair $(E/F,\chi)$.  For (2), suppose $v=(x,m)$ and $v'=(x',m')$ contribute nontrivially to the above direct sum.  Write $x\in E$, $x'\in E'$.  If it happens that a supercuspidal representation $\pi$ appears in both summands, then we must have $\pi\isom\pi_\chi\isom\pi_{\chi'}$ for a admissible pairs $(E/F,\chi)$, $(E'/F,\chi')$.  But since both extensions are tamely ramified, this can only happen if $E=E'$ and $\chi$ and $\chi'$ have the same essential level.  By Thm.~\ref{CMpointbasic}, this implies that $v$ and $v'$ lie in the same orbit under $\GL_2(F)\times B^\times$.
\end{proof}

\subsection{The adjacency relation}  We now construct a graph whose vertex set is $S$.

If $E$ is one of the quadratic extensions of $F$, recall that there is a unique formal group $\Fc_E$ with endomorphisms by $\OO_E$.  For a point $x\in M^E$ represented by a triple $(\Fc_E,\iota,\alpha)$, we get a lattice chain $\Lambda_n=\alpha^{-1}(\gp_E^nT(\Fc_E))\subset F^2$.  Up to re-indexing, the lattice chain $\set{\Lambda_n}$ only depends on the isomorphism class of $x$.  We have $[\Lambda_n:\Lambda_{n+1}]=\#k_E$.

If $x,x'\in M$ are two CM points with respective lattice chains $\Lambda_n,\Lambda_n'$ then it might happen that there is a strict containment $\set{\Lambda_n}\subsetneq\set{\Lambda_n'}$.  For this to occur, we would need $x\in M^{E_0}$ and $y\in M^{E_i}$ for $i\in\set{1,2}$.  If this is the case we will say that the lattice chains for $x$ and $y$ {\em interlace}.

We now define a graph $\Gamma$ whose vertex set is $S$, with the following edges:

\begin{itemize}
\item[(i)] Draw an edge between the vertices $[x]_0$ and $[y]_0$ whenever $x$ and $y$ lie in the same connected component of $M$ and the lattice chains of $x$ and $y$ interlace.
\item[(ii)] For every CM point $x\in M$, and every $m\geq 0$, draw an edge between $[x]_{m+1}$ and $[x]_m$.
\end{itemize}

Call a vertex $v$ of $\Gamma$ unramified if it is of the form $[x]_m$ for $x\in M^{E_0}$, and ramified otherwise.
The graph $\Gamma_0$ is naturally isomorphic to the barycentric subdivision of the Bruhat-Tits tree $\mathcal{T}$ for $\GL_2(F)$.  Under this isomorphism, the unramified vertices of $\Gamma_0$ are in bijection with the vertices of $\mathcal{T}$, while the ramified vertices are in bijection with the midpoints of the edges of $\mathcal{T}$ in its barycentric subdivision.

\begin{figure}[!htb]
\centering
\includegraphics[scale=.7]{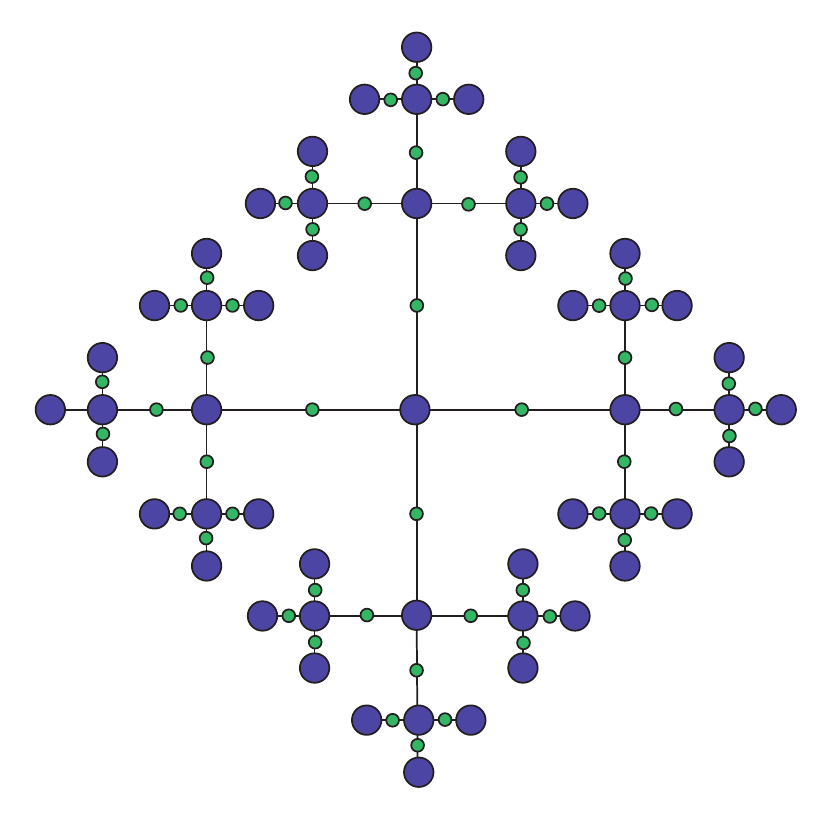}
\caption{A connected component of the graph $\Gamma_0$.  The case of $q=3$ is shown.  The larger blue vertices represent the projective plane curve $XY^q-X^qY=Z^{q+1}$, while the smaller green vertices represent the projective line.}
\label{BTtree}
\end{figure}

Let $E/F$ be a tame quadratic extension and let $v\in S_0$.  Let $\Gamma_v\subset \Gamma$ be the subgraph induced by the set of vertices of the form $[x]_m$, where $[x]_0=v$.  Then $\Gamma_v$ is a tree.

\begin{figure}[!htb]
\centering
\includegraphics[scale=.7]{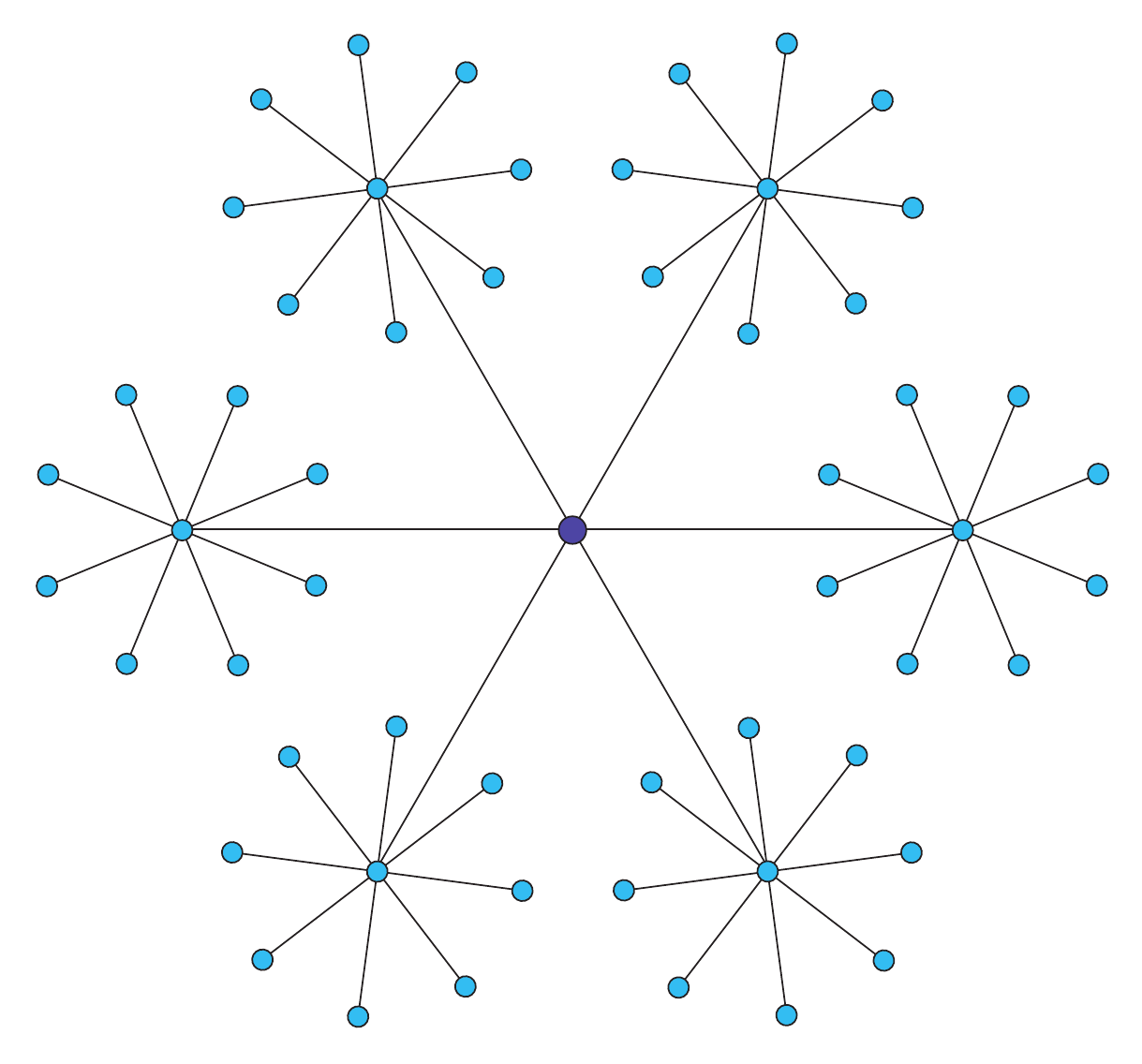}
\caption{The graph $\Gamma_v$ for an unramified vertex $v$.  The case $q=2$ is shown.  The central blue vertex is $v$ itself.    The vertices adjacent to $v$ are a torsor for the action of $\SL_2(\mathbf{F}_q)$.  The cyan vertices represent the projective plane curve $XY^q+X^qY=Z^{q+1}$.  The vertices of higher level adjacent to a particular cyan vertex are a torsor for the action of a nonabelian group of order $q^3$.  In the graph $\Gamma$, a copy of the above graph is glued to the graph $\Gamma_0$ in Fig.~\ref{BTtree} along each of the blue vertices.  }
\end{figure}

\begin{figure}[!htb]
\centering
\includegraphics[scale=.5]{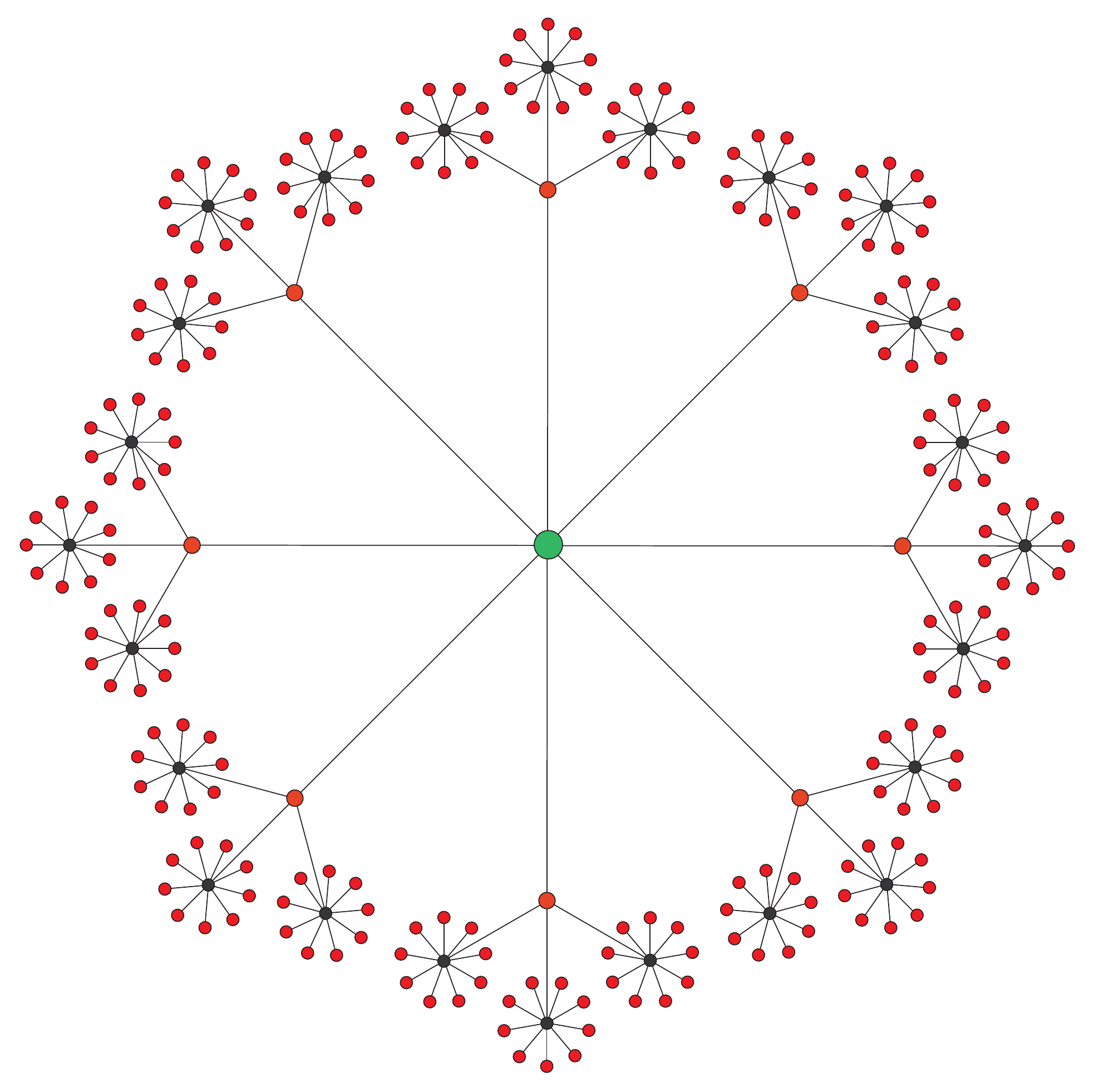}
\caption{The graph $\Gamma_v$ for a ramified vertex $v$.  The case $q=3$ is shown.  The central green vertex is $v$ itself.  The vertices adjacent to $v$ are a torsor for the action of $\mathbf{F}_{q^2}^\times$.  The red vertices represent the nonsingular hyperelliptic curve with affine equation $X^q-X=Y^2$;   the vertices of higher level adjacent to any particular cyan vertex are a torsor for the action of $\mathbf{F}_q$.  The black vertices represent the projective line;  the vertices of higher level adjacent to a particular black vertex are a torsor for the action of $\mathbf{F}_{q^2}$.  In the graph $\Gamma$, there are {\em two} copies of the above graph glued to each cyan vertex in $\Gamma_0$, corresponding to the two distinct unramified extensions $E/F$.}
\end{figure}


There are two types of ends of $\Gamma$:  those which stabilize in $\Gamma_0$, and those which pass through $S_m$ for every $m$.  The ``level 0" ends are in bijection with $\mathbf{P}^1(F)\times V(\mathcal{Z})$, and the
``unbounded" ends are in bijection with $M^{CM}$.


\subsection{Adjacency Data}  To complete the construction of $\mathfrak{X}$, we need to give, for each edge $e$ of $\Gamma$ joining the vertices $v,v'\in S$, a pair of points $P_{e,v}\in \mathfrak{X}_v$ and $P_{e,v'}\in \mathfrak{X}_{v'}$ which will be identified with each other in $\mathfrak{X}$.  Since the curve $\mathfrak{X}$ is expected to have an action of $\GL_2(F)\times B^\times$, this assignment must be consistent:  for every $t\in\GL_2(F)\times B^\times$, we must have $P_{e,v}^t=P_{e^t,v^t}$ and similarly for $v'$.

We begin with the case when the edge $e$ joins vertices $v,v'\in S$ both of level 0.  Without loss of generality we may assume that $x\in M^{E_0}$ and $x'\in M^{E_1}$ are such that $v=[x]_0$ and $v'=[x']_1$.  Then, via the choices of $x$ and $x'$, the curves $X_v$ and $X_{v'}$ may be identified with the curves $X_{x,0}$ and $X_{x',0}$:  These are the curves $x^qy-xy^q=1$ and the projective line, respectively.  Note that the points of $X_v$ at infinity are in correspondence with $\pr^1(k)$.

Since $v$ and $v'$ are adjacent, their respective lattice chains $\Lambda_n$ and $\Lambda_n'$ interlace:  $\set{\Lambda_n}\subset\set{\Lambda_n'}$ is ``every other lattice".  Let $I\subset \mathfrak{A}_x^\times$
 be the Iwahori subgroup which stabilizes the lattice chain $\set{\Lambda_n'}$.  Then $I$ fixes a unique point at infinity in $X_v$;  call this $P_{e,v}$.   If $\set{\Lambda_n}$ equals $\set{\Lambda_{2n}'}$, let $P_{e,v'}$ be the point $\infty\in X_{x',0}=\pr^1$.  If $\set{\Lambda_n}=\set{\Lambda_{2n+1}'}$, let $P_{e,v'}$ be the point $0\in X_{x',0}$.

Now suppose $m\geq 0$ and $v\in S_m$ and $v'\in S_{m+1}$ are adjacent.  This means there exists $x\in M^{\text{CM}}$ such that $v=[x]_m$ and $v'=[x]_{m+1}$.  We let $P_{e,v}$ be the base point $P_{x,m}\in\mathfrak{X}_v$ and $P_{e,v'}$ be the point at infinity in $\mathfrak{X}_{v'}$.

\subsection{Conclusion of Proof of Main Theorem}

Let $\mathfrak{X}$ be the curve obtained by applying the gluing the points $P_{e,v},P_{e,v'}\in\tilde{\mathfrak{X}}(\overline{k})$ for each edge $e$ of $\Gamma$ which joins the points $v,v'$.

\begin{prop}  $\mathfrak{X}$ is a stable curve.
\end{prop}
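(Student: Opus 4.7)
The plan is to verify in turn each of the three defining conditions of a stable curve, drawing on the explicit descriptions from Sections~\ref{JLrealization} and~\ref{construction}. First I would check that each irreducible component of $\mathfrak{X}$ is a smooth, geometrically connected, projective curve over $\overline{k}$. By construction the components are precisely the curves $\mathfrak{X}_v$ for $v \in S$, which according to the case analysis of Section~\ref{JLrealization} are each of one of four explicit types: the Deligne--Lusztig curve $XY^q - X^qY = 1$, the Hermitian curve $XZ^q + X^qZ = Y^{q+1}$, the hyperelliptic curve $Y^2 = X^q - X$, or the projective line; smoothness and geometric irreducibility are visible in each case from the defining equation.

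Next I would verify the normal-crossing condition. Since every gluing step identifies two smooth points on two distinct smooth curves, the local model at every resulting singular point is two smooth branches meeting transversally, i.e.\ a node. What remains is to show that for each vertex $v$ the gluing points $\{P_{e,v}\}$ for distinct edges $e$ incident to $v$ are pairwise distinct in $\mathfrak{X}_v$, so that no singular point of $\mathfrak{X}$ lies on more than two components. For the upward edges from $v = [x]_m$ to $[y]_{m+1}$ with $y \sim_m x$, the gluing points form the image in $\mathfrak{X}_v$ of the $\mathcal{Q}_v^1$-orbit of $P_{x,m}$, where $\mathcal{Q}_v^1 = \mathcal{K}_{x,m}^1 / \mathcal{K}_{x,m+1}^1$; by Proposition~\ref{basepoints}(i) the stabilizer of $P_{x,m}$ in $\mathcal{K}_{x,m}^1$ is exactly $\mathcal{K}_{x,m+1}^1$, so these orbit points are pairwise distinct. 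For the single downward edge (when $m \geq 1$) and the horizontal edges within $\Gamma_0$, the designated gluing points (the point at infinity, the points $0$ and $\infty$ on $\pr^1$, or the Iwahori-fixed points at infinity of the Deligne--Lusztig curve) are disjoint from the upward orbit by inspection of the explicit actions in Section~\ref{JLrealization}.

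Then I would verify the three-point condition on rational components. The rational components are exactly the $\mathfrak{X}_v \cong \pr^1$ arising from ramified vertices of level zero or of positive even level. A ramified level-zero vertex carries two horizontal edges (to its two neighbors in the barycentric subdivision of the Bruhat--Tits tree) together with an upward $\mathcal{Q}_v^1$-orbit of at least $q^2 - 1$ edges; a ramified vertex of positive even level carries one downward edge together with an upward orbit of at least $q^2$ edges. Since $q$ is odd, hence $q \geq 3$, every rational component meets the other components in far more than three distinct points.

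The hard part is the bookkeeping in the second step: one must use the canonical identifications $\mathfrak{X}_x \cong \mathfrak{X}_y$ for $x \sim_m y$ supplied by Proposition~\ref{independenceofx}, together with the equivariance of the gluing data under $\GL_2(F) \times B^\times$, to verify that gluing points attached to distinct upward edges really occupy distinct points of a single model of $\mathfrak{X}_v$. Once this is checked, all three defining properties of a stable curve follow directly from the explicit data of Sections~\ref{JLrealization} and~\ref{construction}.
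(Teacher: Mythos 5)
Your core argument is the same as the paper's. The decisive point in both is that the upward gluing points on a component $\mathfrak{X}_v$, $v=[x]_m$, are the translates $P_{x,m}^t$ of the base point, and that the stabilizer of $P_{x,m}$ in $\mathcal{K}_{x,m}^1$ is exactly $\mathcal{K}_{x,m+1}^1$ (Prop.~\ref{basepoints}(i)), which by fact (5) of \S\ref{construction} is also the stabilizer of the class $[x]_{m+1}$; hence distinct upward edges attach at distinct points and no three components pass through a common point. Your additional verifications (smoothness of the components, each singularity being a transversal meeting of two smooth branches, and the three-point count on the $\mathbf{P}^1$-components) are correct and are precisely the parts the paper treats as immediate.

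What you miss is the case to which the paper devotes the second half of its proof. At a ramified level-zero vertex $v=[x]_0$ with $x\in M^{E_1}$, the upward neighbors $[y]_1$ need not come from $\mathcal{K}_{x,0}^1$-translates of $x$: by fact (6) one can have $y\sim_0 x$ with $y\in M^{E_2}$, the other ramified quadratic extension, and such a $y$ lies in a different $\GL_2(F)\times B^\times$-orbit altogether. For these edges the gluing point on $\mathfrak{X}_v$ is not a point of the $\mathcal{Q}_v^1$-orbit of $P_{x,0}$ in your sense; it is the image of $P_{y,0}$ under the identification $\mathfrak{X}_{y,0}\isom\mathfrak{X}_{x,0}$ furnished not by Prop.~\ref{independenceofx} itself but by the remark following it (the map $X\mapsto uX$ with $u^2=\alpha$, where $j_y(\varpi_{E_2})=j_x(\varpi_{E_1})t$ and $t\in\mathcal{K}_{x,0}^1$ acts by $X\mapsto \alpha X$). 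One must then argue separately that such a point cannot coincide with the gluing point attached to an $E_1$-class: the paper does this by writing $P_{x,0}^t=P_{y,0}$ for that specific $t$ and running the same stabilizer argument as before. Your blanket statement that the upward gluing points at $v$ form a single $\mathcal{Q}_v^1$-orbit of $P_{x,m}$ silently assumes every upward neighbor is a $\mathcal{K}_{x,m}^1$-translate of $x$, which fails exactly at these ramified level-zero vertices; adding the argument above closes the gap, and with it your proof matches the paper's.
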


\begin{proof}  Since $\mathfrak{X}$ was obtained from the nonsingular curve $\tilde{\mathfrak{X}}$ by gluing points, we must check that no three components of $\mathfrak{X}$ meet at a single point.   Suppose that $x\sim_m y\in M^{\text{CM}}$, $v=[x]_m$, $w=[x]_{m+1}$, and $w'=[y]_{m+1}$, and that the components $\mathfrak{X}_v$, $\mathfrak{X}_{w}$ and $\mathfrak{X}_{w'}$ intersect at a common point.  This implies that $P_{x,m}=P_{y,m}\in \mathfrak{X}_v$.  Suppose there exists   $t\in\mathcal{K}_{x,m}^1$ be such that $y=x^t$, so that $P_{x,m}^t=P_{y,m}=P_{x,m}$.  But then $t$ lies in the stabilizer of $P_{x,m}$, which is $\mathcal{K}_{x,m+1}^1$.  Thus $x\sim_{m+1} y$ and $w=w'$.

If $y$ is not a translate of $x$ by an element of $\mathfrak{X}$, then $m=0$ and $x$ and $y$ are have CM by distinct ramified extensions of $F$.  Let $t\in\mathcal{K}_{x,0}^1$ be as in the remark following the proof of Prop.~\ref{independenceofx}, so that $P_{x,m}^t=P_{y,m}$ in $\mathfrak{X}_v$.  The proof now continues as in the previous paragraph.
\end{proof}

\begin{prop}  Then $$\Hom_{\GL_2(F)}\left(\pi,H^1(\mathfrak{X},\QQ_\ell)\right)$$ is isomorphic to $\JL(\pi)\otimes \mathcal{L}_\ell(\check{\pi})$ if $\pi$ is supercuspidal, and is 0 otherwise.
\end{prop}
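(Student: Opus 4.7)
The plan is to relate $H^1(\mathfrak{X},\QQ_\ell)$ to $H^1(\tilde{\mathfrak{X}},\QQ_\ell)$ via the normalization map and then invoke Proposition~\ref{tildeX}. Let $\nu\from\tilde{\mathfrak{X}}\to\mathfrak{X}$ denote the normalization. Because $\mathfrak{X}$ is a stable curve obtained from $\tilde{\mathfrak{X}}$ by identifying pairs of smooth points at ordinary double points, there is a $\GL_2(F)\times B^\times\times W_F$-equivariant short exact sequence of \'etale sheaves on $\mathfrak{X}$:
\begin{equation*}
0\to\QQ_\ell\to\nu_*\QQ_\ell\to\mathcal{Q}\to 0,
\end{equation*}
where $\mathcal{Q}$ is a skyscraper sheaf supported on the nodes. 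Taking cohomology yields the long exact sequence
\begin{equation*}
0\to H^0(\mathfrak{X},\QQ_\ell)\to H^0(\tilde{\mathfrak{X}},\QQ_\ell)\to H^0(\mathfrak{X},\mathcal{Q})\to H^1(\mathfrak{X},\QQ_\ell)\to H^1(\tilde{\mathfrak{X}},\QQ_\ell)\to 0.
\end{equation*}
The middle arrow identifies with the cellular coboundary $C^0(\Gamma,\QQ_\ell)\to C^1(\Gamma,\QQ_\ell)$ of the dual graph $\Gamma$ of $\mathfrak{X}$, so its cokernel is $H^1(\Gamma,\QQ_\ell)$ and we extract an equivariant short exact sequence
\begin{equation*}
0\to H^1(\Gamma,\QQ_\ell)\to H^1(\mathfrak{X},\QQ_\ell)\to H^1(\tilde{\mathfrak{X}},\QQ_\ell)\to 0.
\end{equation*}

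The next step is to show that $H^1(\Gamma,\QQ_\ell)=0$ by verifying that each connected component of $\Gamma$ is a tree. Each component of the level-$0$ subgraph $\Gamma_0$ is, by the discussion following the definition of the adjacency relation, the barycentric subdivision of the Bruhat-Tits tree for $\GL_2(F)$, which is itself a tree. For $m\geq 1$, every vertex $v=[x]_m$ has a unique neighbour of level $m-1$ (namely $[x]_{m-1}$), while its other neighbours all lie at level $m+1$; thus any cycle in $\Gamma$ would have to be contained in $\Gamma_0$, which is impossible. Consequently $H^1(\Gamma,\QQ_\ell)=0$, giving a $\GL_2(F)\times B^\times\times W_F$-equivariant isomorphism $H^1(\mathfrak{X},\QQ_\ell)\isom H^1(\tilde{\mathfrak{X}},\QQ_\ell)$, and the proposition follows from Proposition~\ref{tildeX}.

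The main technical caveat is that $\mathfrak{X}$ is not of finite type over $\overline{k}$, so the normalization exact sequence must be justified in this non-Noetherian setting. I would handle this by writing $\mathfrak{X}$ as a filtered colimit of finite unions of its irreducible components (equivalently: $\tilde{\mathfrak{X}}$ is a disjoint union indexed by $S$, and $\mathfrak{X}$ is the colimit of the finite partial unions obtained by including all components in a finite sub-forest of $\Gamma$, with the corresponding nodes glued). Since étale cohomology commutes with such colimits with affine transition maps and since both $H^1(\Gamma,\QQ_\ell)$ and the normalization sequence are compatible with passage to the limit, the argument above survives intact. This is the one step where I anticipate having to be careful; otherwise the argument is essentially formal once Proposition~\ref{tildeX} is in hand.
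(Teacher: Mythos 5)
Your argument is correct, and it rests on the same key point as the paper's proof: the dual graph $\Gamma$ is a disjoint union of trees, so the "combinatorial" part of $H^1$ vanishes and $H^1(\mathfrak{X},\QQ_\ell)\isom H^1(\tilde{\mathfrak{X}},\QQ_\ell)$, after which everything is delegated to Prop.~\ref{tildeX}. The mechanism differs slightly: the paper quotes the exact sequence of group schemes $1\to H^1(\Gamma,\Z)\otimes\mathbf{G}_m\to\Jac\mathfrak{X}\to\Jac\tilde{\mathfrak{X}}\to 1$ for a stable curve and reads off the statement on $\ell$-adic $H^1$, whereas you work directly with the normalization sequence $0\to\QQ_\ell\to\nu_*\QQ_\ell\to\mathcal{Q}\to 0$ of \'etale sheaves and identify the connecting term with the coboundary of $\Gamma$. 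Your route is arguably the more robust one here: it is manifestly $\GL_2(F)\times B^\times\times W_F$-equivariant, it avoids invoking Jacobians of a curve that is not of finite type, and you supply two justifications the paper leaves implicit --- the verification that $\Gamma$ has no cycles (unique lower neighbour at levels $m\geq 1$, plus $\Gamma_0$ being the barycentric subdivision of the Bruhat--Tits tree) and the passage to the colimit over finite sub-forests needed to make the cohomological argument legitimate in the infinite setting. So this is a correct proof, essentially the paper's argument with the comparison of $H^1(\mathfrak{X})$ and $H^1(\tilde{\mathfrak{X}})$ carried out at the sheaf level rather than through Jacobians, and with the omitted checks filled in.
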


\begin{proof}
Generally speaking, if $\mathfrak{X}$ is a stable curve with dual graph $\Gamma$, and $\tilde{\mathfrak{X}}$ is the normalization of $\mathfrak{X}$, then there is an exact sequence of commutative group schemes
\begin{equation}
\label{H1Gamma}
1\to H^1(\Gamma,\Z)\otimes\mathbf{G}_m\to \Jac\mathfrak{X}\to\Jac\tilde{\mathfrak{X}}\to 1
\end{equation}
Since $\Gamma$ is a disjoint union of trees, $H^1(\Gamma,\Z)=0$.  We conclude that $H^1(\mathfrak{X},\QQ_\ell)=H^1(\tilde{\mathfrak{X}},\QQ_\ell)$.  The proof now follows from Prop.~\ref{tildeX}.
\end{proof}



\section{Appendix: Some interesting curves}
\label{curves}

\subsection{On the hyperelliptic curve $Y^2=X^q-X$} Let $\mathfrak{X}_0$ be the nonsingular affine curve over $\overline{\F}_q$ with equation $y^2=x^q-x$.   The projective version of this equation, $Y^2Z^{q-2}=X^q-XZ^{q-1}$, has a singularity at $[0:1:0]$.  The singularity is resolved by means of the map of $\overline{\F}_q$-algebras
\begin{eqnarray*}
\frac{\overline{\F}_q[X,Z]}{Z^{q-2}-X^q+XZ^{q-1}}&\to& \frac{\overline{\F}_q[U,V]}{U^2-U^2V^{q-1}-V}\\
X&\mapsto& UV^{(q-3)/2}\\
Z&\mapsto& UV^{(q-1)/2}
\end{eqnarray*}
Let $\mathfrak{X}_\infty$ be the spectrum of the ring on the right;  then $\mathfrak{X}_\infty$ is a nonsingular affine curve.  Let $\mathfrak{X}$ be the proper curve obtained by gluing $\mathfrak{X}_0$ and $\mathfrak{X}_\infty$ along the morphism defined above.  Let $\infty$ be the unique closed point of $\mathfrak{X}\backslash\mathfrak{X}_0$;  this is the point $(u,v)=(0,0)$ on $\mathfrak{X}_\infty$.

The curve $\mathfrak{X}$ admits an action of the additive group $\F_q$ given by $[\alpha](x,y)= (x+\alpha,y)$ on $\mathfrak{X}_0$.   This group has $\infty\in\mathfrak{X}(\F_q)$ as its unique fixed point.  A calculation shows the multiplicity of $\infty$ as a fixed point for any $\alpha\in\F_q^\times$ is 3.





Let $\ell\nmid q$ be prime.  By the Lefshetz fixed-point theorem, the trace of $\alpha\in\F_q^\times$ acting on $H^*(\mathfrak{X},\Q_\ell)$ is $3$.  Since the traces of $\alpha$ on $H^0$ and $H^2$ are both 1, the trace of $\alpha$ on $H^1(\overline{\mathfrak{X}},\Q_\ell)$ is $-1$.  The genus of $\mathfrak{X}$ is $(q-1)/2$, so that $\dim H^1(\overline{\mathfrak{X}},\Q_\ell)=q-1$.
Therefore we have:
\begin{prop}\label{hyperellipticdecomp}
 As a module for $\F_q$, we have $$H^1(\mathfrak{X},\QQ_\ell)=\bigoplus_{\psi\neq 1}\psi,$$ the sum running over nontrivial characters $\psi\from\F_q\to\QQ_\ell^*$.
\end{prop}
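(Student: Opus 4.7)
The plan is to extract the decomposition directly from the trace character, using Fourier analysis on the abelian group $\F_q$. Since $\F_q$ is a finite abelian group and $\QQ_\ell$ contains enough roots of unity (all $p$-th roots of unity belong to $\QQ_\ell$ for $\ell\neq p$, via a finite extension if necessary), any finite-dimensional $\QQ_\ell$-representation of $\F_q$ decomposes canonically as $\bigoplus_\psi V_\psi$ indexed by characters $\psi\from\F_q\to\QQ_\ell^\times$. It suffices to compute $\dim V_\psi$ for every $\psi$.

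First I would record the trace function $\chi$ of $\F_q$ on $H^1(\mathfrak{X},\QQ_\ell)$ that the preceding discussion has already pinned down: $\chi(0)=q-1$ (from the genus computation), and $\chi(\alpha)=-1$ for all $\alpha\in\F_q^\times$ (from Lefschetz applied to the unique fixed point $\infty$ with local multiplicity $3$, combined with the known traces $1$ and $1$ on $H^0$ and $H^2$). Then orthogonality of characters gives
\begin{equation*}
\dim V_\psi \;=\; \frac{1}{q}\sum_{\alpha\in\F_q}\chi(\alpha)\,\overline{\psi(\alpha)}
\;=\; \frac{1}{q}\left((q-1) \;-\; \sum_{\alpha\in\F_q^\times}\overline{\psi(\alpha)}\right).
\end{equation*}
For $\psi\neq 1$ the inner sum equals $-1$ since $\sum_{\alpha\in\F_q}\overline{\psi(\alpha)}=0$, giving $\dim V_\psi=1$; for $\psi=1$ the inner sum equals $q-1$, giving $\dim V_1=0$. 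Summing dimensions over nontrivial $\psi$ recovers the full $(q-1)$-dimensional space, so there is no room for anything else.

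The step I would scrutinize most carefully is the ingredient the author imports rather than the character-theoretic step itself: namely the assertion that for each $\alpha\in\F_q^\times$, the point $\infty$ is the unique fixed point of $[\alpha]$ and its local Lefschetz multiplicity equals $3$. Uniqueness is clear on the affine chart $\mathfrak{X}_0$ since $[\alpha]$ acts freely by $(x,y)\mapsto(x+\alpha,y)$, so the only question is at $\infty$. To verify the multiplicity I would translate the action into the local coordinates $(u,v)$ on $\mathfrak{X}_\infty$ via $x=uv^{(q-3)/2}$, $z=uv^{(q-1)/2}$ (so $1/x = v/u$, $y/x = 1/u$), where $\infty=(0,0)$; computing the induced map and the intersection of its graph with the diagonal in $\mathfrak{X}_\infty\times\mathfrak{X}_\infty$ at $(0,0)$ should yield length $3$. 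Once that local computation is in hand, the Lefschetz fixed-point theorem and the orthogonality argument above complete the proof.
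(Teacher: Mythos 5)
Your proposal is correct and follows essentially the same route as the paper: the paper's argument is precisely the Lefschetz fixed-point computation (unique fixed point $\infty$ of multiplicity $3$ for each $\alpha\in\F_q^\times$, hence trace $-1$ on $H^1$) combined with the genus count $\dim H^1=q-1$, with the character-orthogonality step you spell out left implicit. Your extra scrutiny of the local multiplicity at $\infty$ simply fills in the paper's phrase ``a calculation shows,'' and your proposed verification in the coordinates $(u,v)$ is the right way to do it.
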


Write $H^1$ for $H^1(\mathfrak{X},\QQ_\ell)$.  If $\psi$ is a nontrivial character of $\F_q$,  write $H^1[\psi]$ for the 1-dimensional $\psi$-eigenspace.  Since the action of $\F_q$ on $\mathfrak{X}$ is defined over $\F_q$, the geometric Frobenius element $\Fr\in\Aut\mathfrak{X}$ stabilizes each $H^1[\psi]$.

\begin{prop}  \label{frobeigenvalue} The eigenvalue of $\Fr$ on $H^1[\psi]$ is $-g_\psi$, where $g_\psi=\sum_{\alpha\in\F_q^{\times}}\psi(\alpha)\left(\frac{\alpha}{q}\right)$ is the quadratic Gauss sum.
\end{prop}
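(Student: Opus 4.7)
The plan is to apply the Lefschetz trace formula to the composite endomorphism $\Fr \circ [\alpha]$ for each $\alpha \in \F_q$, and then extract $\lambda_\psi := \tr(\Fr \mid H^1[\psi])$ by Fourier inversion on the additive group $\F_q$.

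On the cohomological side, Proposition~\ref{hyperellipticdecomp} identifies $H^1(\mathfrak{X},\QQ_\ell) = \bigoplus_{\psi \neq 1} H^1[\psi]$ as a sum of $1$-dimensional $\F_q$-isotypic pieces. Since $\Fr$ is defined over $\F_q$, it commutes with each $[\alpha]$ and hence preserves every $H^1[\psi]$, so $\Fr \circ [\alpha]$ acts on $H^1[\psi]$ as the scalar $\lambda_\psi \psi(\alpha)$. Combined with the usual traces $1$ and $q$ on $H^0$ and $H^2$, Lefschetz gives
\[
\#\mathrm{Fix}(\Fr \circ [\alpha]) \;=\; 1 + q - \sum_{\psi \neq 1} \lambda_\psi \psi(\alpha).
\]

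For the geometric side I would solve the fixed-point equations directly. Writing $\Fr(x,y) = (x^q, y^q)$, a fixed point of $\Fr \circ [\alpha]$ on $\mathfrak{X}_0$ satisfies $x^q + \alpha = x$ and $y^q = y$; combined with the curve equation $y^2 = x^q - x$, this forces $y \in \F_q$ with $y^2 = -\alpha$. For $\alpha \in \F_q^\times$ there are $1 + \left(\frac{-\alpha}{q}\right)$ such $y$, and for each $y$ the Artin--Schreier equation $x^q - x = -\alpha$ has exactly $q$ solutions in $\overline{\F}_q$. The point $\infty$ is also fixed and contributes with multiplicity $1$, because $d\Fr = 0$ forces the graph of $\Fr \circ [\alpha]$ to meet the diagonal transversally at every fixed point. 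Totalling gives
\[
\#\mathrm{Fix}(\Fr \circ [\alpha]) \;=\; q + q\left(\tfrac{-\alpha}{q}\right) + 1,
\]
a formula which remains valid at $\alpha = 0$ with the convention $\left(\tfrac{0}{q}\right) = 0$. Equating the two expressions yields
\[
\sum_{\psi \neq 1} \lambda_\psi \psi(\alpha) \;=\; -q\left(\tfrac{-\alpha}{q}\right)
\qquad \text{for all } \alpha \in \F_q,
\]
and Fourier inversion over $\F_q$ gives $\lambda_\psi = -\sum_{\alpha \in \F_q^\times} \psi^{-1}(\alpha)\left(\tfrac{-\alpha}{q}\right)$. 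Using $\psi^{-1}(\alpha) = \psi(-\alpha)$ and substituting $\beta = -\alpha$ collapses this sum to $-g_\psi$, as claimed.

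The main obstacle is the bookkeeping at infinity: the argument hinges on correctly distinguishing the multiplicity of $\infty$ as a fixed point of $\Fr \circ [\alpha]$ (which is $1$, by inseparability of $\Fr$) from the multiplicity $3$ that $\infty$ has as a fixed point of the purely geometric action of $[\alpha]$ alone, as noted earlier in this section. Once this transversality is in hand, the rest is a clean character-sum manipulation.
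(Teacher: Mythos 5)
Your proposal is correct and follows essentially the same route as the paper: count fixed points of $\Fr$ composed with the translations $[\alpha]$, apply the Lefschetz formula, and extract the eigenvalue on $H^1[\psi]$ by averaging against $\psi$ (your Fourier inversion is exactly the paper's projection by the idempotent $e_\psi$). Your explicit justification of multiplicity one at every fixed point via $d\Fr=0$, and your bookkeeping of the $H^0$ and $H^2$ traces as $1$ and $q$, are if anything slightly cleaner than the paper's intermediate trace formula, whose constant term is harmless only because it dies against the nontrivial character.
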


\begin{proof}
Consider the automorphism $\alpha\circ \Fr$ of $\mathfrak{X}$ which sends $(x,y)$ to $(x^q+a,y^q)$.    We count the fixed points of $\alpha\circ\Fr$.    If $(x,y)$ is an affine fixed point, then $x^q+a=x$ and $y^q=y$, so that $y^2=x^q-x=(x-a)-x=-a$ and also that $y\in\F_q$.    Naturally, there are always $q$ solutions to $x^q+a=x$.  It follows that there are $2q$ affine fixed points if $-a$ is a square in $\F_q^*$, $q$ affine fixed points if $a=0$, and none if $-a$ is a nonsquare.  In other words, the number of affine fixed points is $q\left(1+\left(\frac{-a}{q}\right)\right)$.  The point $\infty$ is always a fixed point for $\alpha\circ\Fr$ of multiplicity 1.  Therefore $$\tr\left(\alpha\circ\Fr\vert H^1\right)=-q\left(1+\left(\frac{-\alpha}{q}\right)\right)+1.$$

The group algebra $\QQ_\ell[\F_q]$ acts on $H^1$.   For a nontrivial character $\psi\from\F_q\to\QQ_\ell$, the idempotent element $$e_\psi=\frac{1}{q}\sum_{\alpha\in\F_q}\psi^{-1}(\alpha)[\alpha]\in\QQ_\ell[\F_q]$$ projects any $\QQ_\ell[\F_q]$-module $M$ onto its $\psi$-eigenspace $M[\psi]$.  The eigenvalue of $\Fr$ on $H^1[\psi]$ is therefore the trace of $e_\psi\circ\Fr$ on $H^1$.  This is

\begin{eqnarray*}
\tr\left(\Fr\vert H^1[\psi]\right) &=&\tr\left(e_\psi\circ\Fr\vert H^1[\psi]\right)\\
&=&\frac{1}{q}\sum_{\alpha\in\F_q}\psi^{-1}(\alpha)\tr\left(\alpha\circ\Fr\vert H^1\right)\\
&=&\frac{1}{q}\sum_{\alpha\in\F_q}\psi^{-1}(\alpha)\left[-q\left(1+\left(\frac{-\alpha}{q}\right)\right)+1\right]\\
&=&-\sum_{\alpha\in\F_q} \psi(\alpha)\left(\frac{\alpha}{q}\right),
\end{eqnarray*}

thus proving the proposition. \end{proof}

\subsection{On the Hermitian curve $Y^{q+1}=X^q+X$}\label{Hermitian}  Let $\mathfrak{X}$ be the nonsingular projective plane curve with affine equation $Y^{q+1}=X^q+X$.  This curve has genus $g=q(q-1)/2$.   The curve $\mathfrak{X}$ is known as the Hermitian curve.  It is a {\em maximal} curve over $\mathbf{F}_{q^2}$, meaning that
$$\#\mathfrak{X}(\mathbf{F}_{q^2}) = q^3 +1 = q^2 + 1 + 2gq$$ is the maximum number of $\mathbf{F}_{q^2}$-rational points for any nonsingular projective curve of genus $g$.

Let $\Phi\in\Aut\mathfrak{X}$ be the geometric Frobenius.  By the Lefshetz fixed-point theorem applied to $\Phi$, the maximality of $\mathfrak{X}$ is equivalent to:
\begin{prop}  \label{hermitian} $\Phi$ acts on $H^1(\mathfrak{X},\QQ_\ell)$ as the scalar $-q$.
\end{prop}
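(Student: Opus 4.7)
The plan is to verify the maximality of $\mathfrak{X}$ over $\mathbf{F}_{q^2}$ by a direct point count, and then read off the eigenvalues of $\Phi$ on $H^1$ from the Lefschetz trace formula together with the Weil bound. Since the statement is said to be equivalent to maximality, this is essentially the only route.

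First, I would count $\mathfrak{X}(\mathbf{F}_{q^2})$ directly. The right-hand side $x \mapsto x^q + x$ is exactly the trace map $\mathrm{Tr}_{\mathbf{F}_{q^2}/\mathbf{F}_q}$, which is an $\mathbf{F}_q$-linear surjection whose fibers over $\mathbf{F}_q$ each have cardinality $q$. The left-hand side $y \mapsto y^{q+1}$ is the norm $N_{\mathbf{F}_{q^2}/\mathbf{F}_q}$, which surjects onto $\mathbf{F}_q$ with $0 \mapsto 0$ and each nonzero fiber of size $q+1$. Thus for the $q$ values of $x$ with $\mathrm{Tr}(x)=0$ the only solution is $y=0$ (contributing $q$ affine points), while for the remaining $q^2 - q$ values of $x$ each contributes $q+1$ solutions in $y$ (contributing $(q^2-q)(q+1) = q^3 - q$). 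Adding the single point at infinity on the projective plane curve yields
\[
\#\mathfrak{X}(\mathbf{F}_{q^2}) = q + (q^3 - q) + 1 = q^3 + 1,
\]
which is exactly the Hasse--Weil upper bound $q^2 + 1 + 2gq$ for a curve of genus $g = q(q-1)/2$.

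Now apply the Lefschetz trace formula to the geometric Frobenius $\Phi$ over $\mathbf{F}_{q^2}$:
\[
\#\mathfrak{X}(\mathbf{F}_{q^2}) = 1 + q^2 - \mathrm{tr}\bigl(\Phi \bigm| H^1(\mathfrak{X},\overline{\mathbb{Q}}_\ell)\bigr).
\]
Combining with the point count gives $\mathrm{tr}(\Phi \mid H^1) = -q^2(q-1) = -q \cdot \dim H^1$. By Deligne's theorem (or just the Weil conjectures for curves), every eigenvalue $\lambda_i$ of $\Phi$ on $H^1$ satisfies $|\lambda_i| = q$; writing $\lambda_i = q e^{i\theta_i}$ and using $\sum_i \cos\theta_i = -\dim H^1$ forces every $\cos\theta_i = -1$, hence $\lambda_i = -q$ for all $i$. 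Since $\Phi$ is diagonalizable with a single eigenvalue, it acts as the scalar $-q$.

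There is no real obstacle here; the only substantive ingredient beyond elementary counting is the Weil bound, which upgrades a trace identity to a scalar statement. One small care-point is to make sure $\Phi$ denotes the geometric Frobenius over $\mathbf{F}_{q^2}$ (so that the Weil bound gives absolute value $q$, not $\sqrt{q}$), which is consistent with the statement of the proposition.
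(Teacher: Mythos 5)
Your argument is correct and is essentially the paper's own: the paper simply records that $\mathfrak{X}$ is maximal over $\mathbf{F}_{q^2}$ (the point count $q^3+1$ you verify directly) and notes that, via the Lefschetz fixed-point formula, maximality is equivalent to the scalar statement, exactly the Lefschetz-plus-Weil-bound reasoning you spell out. The only step you assert without justification is the diagonalizability of $\Phi$ on $H^1$; this is the standard semisimplicity of Frobenius on the Tate module of the Jacobian of a curve over a finite field (Weil/Tate), which the paper likewise takes for granted, so it merits a citation rather than being a genuine gap.
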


\begin{rmk}  $\mathfrak{X}$ is isomorphic over $\mathbf{F}_{q^2}$ to the Deligne-Lusztig curve $XY^q-X^qY=1$, so Prop.~\ref{hermitian} applies to that curve as well.
\end{rmk}

The automorphism group $\Aut_{\mathbf{F}_{q^2}}\mathfrak{X}$ is a unitary group in three variables.  The Borel subgroup of that unitary group is the group $Q\subset\PGL_3(\mathbf{F}_{q^2})$ of matrices of the form
$$
\left(\begin{matrix} \alpha & \beta & \gamma \\ & \alpha^q & \beta^q \\ & & \alpha \end{matrix}\right)
$$
which satisfy $\alpha\gamma^q+\alpha^q\gamma=\beta^{q+1}$.
Let $P\subset Q$ be the subgroup of matrices with $\alpha=1$ and $\beta=0$, so that $P$ is isomorphic to the group of elements of $\gamma\in \mathbf{F}_{q^2}$ with $\gamma^q+\gamma=0$.  Then $P$ lies in the center of $Q$.

Fix a character $\psi$ of $\mathbf{F}_q$ with values in $\QQ_\ell$.  For each $\alpha\in \mathbf{F}_{q^2}$, let $\psi_\alpha$ be the character $\gamma\mapsto \psi(\tr_{\mathbf{F}_{q^2}/\mathbf{F}_q} \alpha \gamma)$ of $P$.  Then $\psi_{\alpha}$ is nontrivial if and only if $\alpha\not\in\mathbf{F}_q$.  For each such $\alpha$, let $\tau_\alpha$ be the $\psi_{\alpha}$-isotypic component of the $Q$-module $H^1(\mathfrak{X},\QQ_\ell)$.

Let $D\subset Q$ be the subgroup of diagonal matrices.  The following is a straightforward application of the Lefshetz fixed-point theorem together with some representation theory.
\begin{prop}  \label{hermitiandecomp}
\mbox{}
\begin{itemize}
\item[(i)] $\tau_\alpha$ is an irreducible $Q$-module.
\item[(ii)] For every $\zeta\in D\backslash\set{1}$, we have $\tr\tau_\alpha(\zeta)=-1$.
\item[(iii)] $H^1(\mathfrak{X},\QQ_\ell)=\bigoplus_{\alpha} \tau_\alpha$, where $\alpha$ runs over a set of representatives for $\mathbf{F}_{q^2}\backslash\mathbf{F}_{q}$ modulo $\mathbf{F}_q^\times$.
\end{itemize}

\end{prop}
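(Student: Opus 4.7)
The plan is to apply the Lefschetz trace formula to elements of $P$ and of $P\cdot D$, Fourier-invert over the central subgroup $P$ to read off the decomposition (iii) and the character value (ii), and then invoke Stone--von Neumann for the Heisenberg-type group $U\subset Q$ of matrices with $\alpha=1$ to obtain irreducibility (i).

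First I would compute $\tr(\gamma\mid H^1)$ for $\gamma\in P\setminus\{0\}$. The element $\gamma$ acts on $\mathbf{P}^2$ by $[X\!:\!Y\!:\!Z]\mapsto[X+\gamma Z\!:\!Y\!:\!Z]$, whose only fixed point on $\mathfrak{X}$ is $[1\!:\!0\!:\!0]$. In local coordinates $(y,z)=(Y/X,Z/X)$ at this point the equation reads $y^{q+1}=z+z^q$, so $y$ is a uniformizer and $z\equiv y^{q+1}\pmod{y^{q(q+1)}}$; the pull-back $\gamma^{*}y=y/(1+\gamma z)=y-\gamma y^{q+2}+\cdots$ shows the fixed-point multiplicity is $q+2$, hence $\tr(\gamma\mid H^1)=2-(q+2)=-q$ by Lefschetz. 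Character orthogonality over $P$ then gives $\dim\tau_\alpha=q$ for every nontrivial $\psi_\alpha$ and $\dim\tau_{\mathbf{1}}=0$, and since $(q-1)\cdot q=2g$ these isotypic components exhaust $H^1$, proving (iii).

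Next I would compute $\tr(\zeta\mid H^1)$ and $\tr(\gamma\zeta\mid H^1)$ for $\gamma\in P\setminus\{0\}$ and $\zeta\in D\setminus\{1\}$. The element $\zeta$ acts as $(X,Y)\mapsto(X,\zeta Y)$, with $q+1$ simple fixed points on $\mathfrak{X}$ (the point at infinity together with the $q$ affine points having $Y=0$), giving $\tr(\zeta\mid H^1)=1-q$; and $\gamma\zeta$ acts as $[X\!:\!Y\!:\!Z]\mapsto[X+\gamma Z\!:\!\zeta Y\!:\!Z]$, with the unique simple fixed point $[1\!:\!0\!:\!0]$, giving $\tr(\gamma\zeta\mid H^1)=1$. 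Because $P$ is central in $Q$, the projector $e_{\psi_\alpha}=q^{-1}\sum_{\gamma}\psi_\alpha^{-1}(\gamma)\gamma$ commutes with $\zeta$, so
\begin{equation*}
\tr(\zeta\mid\tau_\alpha)\;=\;\frac{1}{q}\Bigl[\tr(\zeta\mid H^1)+\sum_{\gamma\neq 0}\psi_\alpha^{-1}(\gamma)\cdot 1\Bigr]\;=\;\frac{1}{q}\bigl[(1-q)+(-1)\bigr]\;=\;-1,
\end{equation*}
establishing (ii).

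For (i), note that the unipotent radical $U\subset Q$ of matrices with $\alpha=1$ has order $q^3$, centre $P$ of order $q$, and abelian quotient $U/P\cong\mathbf{F}_{q^2}$. By Stone--von Neumann for this Heisenberg group, every irreducible $U$-representation with nontrivial central character $\psi_\alpha$ has dimension $q$, and any $U$-representation whose central character is $\psi_\alpha$ has dimension a positive multiple of $q$. Since $\tau_\alpha\vert_U$ has dimension exactly $q$ it is already $U$-irreducible, hence $Q$-irreducible, proving (i). The main obstacle I anticipate is the ramification calculation at $[1\!:\!0\!:\!0]$: one must verify that the inversion $(1+\gamma z)^{-1}$ combined with the curve relation $z=y^{q+1}+O(y^{q(q+1)})$ produces $\gamma^{*}y-y$ vanishing to exactly order $q+2$ (and to order $1$ after twisting by any $\zeta\neq 1$), with no spurious cancellations in the higher-order terms. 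Once that local analysis is pinned down, the remaining steps are formal Fourier analysis on $P$ and standard Heisenberg representation theory on $U$.
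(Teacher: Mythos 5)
Your proof is correct and follows essentially the route the paper indicates for this proposition: Lefschetz fixed-point computations for elements of $P$, $D$, and their products (with the local multiplicity $q+2$ at $[1:0:0]$ for nontrivial $\gamma\in P$, and simple fixed points once a nontrivial diagonal part is present), combined with Fourier inversion over the central subgroup $P$ and Stone--von Neumann theory for the order-$q^3$ unipotent radical $U$ to get irreducibility. The only remark is that your argument naturally indexes the decomposition in (iii) by the $q-1$ nontrivial characters $\psi_\alpha$ of $P$, i.e.\ by $\alpha\in\mathbf{F}_{q^2}\setminus\mathbf{F}_q$ up to additive translation by $\mathbf{F}_q$ (consistent with $(q-1)\cdot q=2g$), which is the indexing the statement intends.
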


\bibliographystyle{amsalpha}
\bibliography{ENLTT}

\end{document}